\documentclass[reqno,11pt]{amsart}

\usepackage[english]{babel}
\usepackage[utf8]{inputenc}
\usepackage[T1]{fontenc}

\usepackage[margin=1in, letterpaper]{geometry}
\usepackage{amssymb,amsmath,amsfonts,amsthm}
\usepackage{bm,bbm,mathrsfs}
\usepackage[dvipsnames]{xcolor}
\usepackage{url}
\usepackage{enumitem}
\usepackage{mathtools}
\usepackage{placeins}
\usepackage{extarrows}
\usepackage{subfig}
\usepackage[font=small,labelfont=bf]{caption}
\usepackage{floatrow}
\usepackage{verbatim}
\usepackage{wrapfig}
\usepackage{tikz}
\usetikzlibrary{arrows,arrows.meta,cd,decorations.pathmorphing,backgrounds,positioning,fit,matrix}
\usepackage{xcolor,colortbl,graphics,graphicx}
\usepackage[colorlinks=true, allcolors=Blue, pdfencoding=auto]{hyperref}
\usepackage{aliascnt}
\usepackage[noabbrev,capitalize]{cleveref}

\newtheorem{theorem}{Theorem}[section]
\newaliascnt{lemma}{theorem}
\newtheorem{lemma}[lemma]{Lemma}
\aliascntresetthe{lemma}
\crefname{lemma}{Lemma}{Lemmas} 
\newaliascnt{proposition}{theorem}
\newtheorem{proposition}[proposition]{Proposition}
\aliascntresetthe{proposition}
\crefname{proposition}{Proposition}{Propositions}
\newaliascnt{corollary}{theorem}

\aliascntresetthe{corollary}
\crefname{corollary}{Corollary}{Corollaries}
\newaliascnt{example}{theorem}

\aliascntresetthe{example}
\crefname{example}{Example}{Examples}
\newaliascnt{question}{theorem}

\aliascntresetthe{question}
\crefname{question}{Question}{Questions}
\newaliascnt{conjecture}{theorem}

\aliascntresetthe{conjecture}
\crefname{conjecture}{Conjecture}{Conjectures}
\newaliascnt{assumption}{theorem}

\aliascntresetthe{assumption}
\crefname{assumption}{Assumption}{Assumptions}
\newaliascnt{definition}{theorem}

\aliascntresetthe{definition}
\crefname{definition}{Definition}{Definitions}
\newaliascnt{notation}{theorem}
\newtheorem{notation}[notation]{Notation}
\aliascntresetthe{notation}
\crefname{notation}{Notation}{Notations}
\theoremstyle{remark}
\newaliascnt{remark}{theorem}
\newtheorem{remark}[remark]{Remark}
\aliascntresetthe{remark}
\crefname{remark}{Remark}{Remarks}
\crefname{equation}{}{}
\numberwithin{equation}{section}
\allowdisplaybreaks

\newcommand{\Mult}{\mathrm{Mult}}

\newcommand{\bigboxtimesA}[2]{%
  \vcenter{\hbox{\scalebox{2}{$\displaystyle\boxtimes$}}}%
}
\newcommand{\lev}{\textnormal{lev}}

\newcommand{\Z}{\mathbb{Z}}
\newcommand{\R}{\mathbb{R}}

\newcommand{\C}{\mathbb{C}}

\newcommand{\F}{\mathbb{F}}

\renewcommand{\H}{\mathbb{H}}
\renewcommand{\P}{\mathbb{P}}

\newcommand{\msA}{\mathscr{A}}
\newcommand{\msB}{\mathscr{B}}
\newcommand{\msC}{\mathscr{C}}
\newcommand{\msS}{\mathscr{S}}
\newcommand{\msT}{\mathscr{T}}

\newcommand{\mI}{\mathcal{I}}
\newcommand{\mJ}{\mathcal{J}}

\newcommand{\ma}{\mathfrak{a}}

\newcommand{\one}{\mathbbm{1}}

\newcommand{\GL}{\textnormal{GL}}

\newcommand{\SL}{\textnormal{SL}}
\newcommand{\PSL}{\textnormal{PSL}}

\newcommand{\Tr}{\textnormal{Tr}}

\newcommand{\lf}{\left\lfloor}
\newcommand{\rf}{\right\rfloor}

\newcommand{\eps}{\varepsilon}

\renewcommand{\bar}{\overline}
\renewcommand{\hat}{\widehat}

\renewcommand{\pmod}[1]{\ (\textnormal{mod } #1)}

\definecolor{myBlue}{rgb}{0, 0, 0.5}
\definecolor{myRed}{rgb}{0.6, 0, 0}
\definecolor{myGreen}{rgb}{0, 0.4, 0}

\newcommand{\ap}[1]{\color{myGreen} {\bf Alex:} #1 \color{black}}

\title{Bilinear forms with Kloosterman sums via quadratic characters}

\author[Valentin Blomer]{Valentin Blomer}
\address{Mathematisches Institut, Endenicher Allee 60, 53115 Bonn, Germany}
\email{blomer@math.uni-bonn.de}

\author[Alexandru Pascadi]{Alexandru Pascadi}
\address{Mathematisches Institut, Endenicher Allee 60, 53115 Bonn, Germany}
\email{pascadi@math.uni-bonn.de}

\thanks{
Both authors supported through EXC-2047/1 - 390685813
and by ERC Advanced Grant 101054336. }

\keywords{Kloosterman sums, character sums, moments of $L$-functions, exceptional Maa{\ss} forms}

\subjclass[2020]{Primary  11L05, 11L40, 11M41, 11F30}

\begin{document}

\begin{abstract}
   We prove new bounds for bilinear forms with Kloosterman sums, valid for all moduli $c$. In the critical range where the summation length is the square root of the modulus, the saving over the trivial bound is $c^{-1/32}$, improving on all previous approaches even for prime moduli. This is based on a new connection to quadratic character sums. Applications to moments of twisted $L$-functions and to the large sieve for exceptional Maa{\ss} forms are given.
\end{abstract}

\maketitle


\section{Introduction} \label{sec:mom-sv}

\subsection{The main result}
Kloosterman sums are ubiquitous in number theory. Weil's bound provides a best possible estimate, but often one is interested in sums of Kloosterman sums against various other sequences, over the arguments, the modulus or both. A recurring theme are bilinear forms with Kloosterman sums of the shape 
$$\sum_{m=1}^N \sum_{n=1}^N \alpha_m \beta_n S(m, n; c) $$
where $\alpha$ and $\beta$ are arbitrary sequences. Using Weil's bound for Kloosterman sums, the trivial bound is
\begin{equation}\label{eq:trivial-Weil} 
    \| \alpha \| \| \beta \| N c^{1/2 + o(1)},
\end{equation}
where here and henceforth $\| \cdot \|$ denotes the $2$-norm, i.e.\ $\| \alpha \| = (\sum_n |\alpha_n|^2)^{1/2}$. 
On the other hand,  opening the Kloosterman sum and applying Cauchy's inequality (see \cite[p.\ 82]{iwaniec1997topics}) gives
\begin{equation}\label{eq:trivial-Fourier} 
\| \alpha \| \| \beta\| (c+N).
\end{equation}
A critical range is $N \approx \sqrt{c}$, in which case the two previous bounds coincide. This is often the threshold in applications. A lot of work has been devoted to improving the trivial bound by diverse methods coming from algebraic geometry, $p$-adic analysis and additive combinatorics (see in particular \cite{kowalski2017bilinear, kowalski2020stratification, blomer2015second, milicevic2025bilinear, pascadi2025nonabelian, fouvry2025bilinear}). In this paper we present a new method that is superior to all of these bounds in this range, and our main result applies to arbitrary moduli.

\begin{theorem} \label{thm:bilinear-forms-balanced-lengths}
Let $c \in \Z_+$, $N \in \Z \cap [1, c]$, and $\mI, \mJ \subset \Z$ be intervals with $|\mI|, |\mJ| \le N$. Then for any complex sequences $(\alpha_m)_{m \in \mI}$, $(\beta_n)_{n \in \mJ}$ and any $a \in (\Z/c\Z)^\times$, one has
\begin{equation}\label{eq:bilinear-forms-balanced-lengths}
\begin{aligned}
    \mathop{\sum\sum}_{\substack{m \in \mI, n \in \mJ \\ (m, n, c) = 1}} \alpha_m \beta_n S(am, n; c) 
    &\ll
    \|\alpha\| \|\beta\| c^{1+o(1)}
    \Big(\frac{N^{1/8}}{c^{3/32}} + \frac{N^{5/16}}{c^{3/16}} + \frac{N^{2/3}}{c^{7/18}}\Big)
    \\
    &=
    \|\alpha\| \|\beta\|
    Nc^{1/2 + o(1)}
    \Big(\frac{c^{13/32}}{N^{7/8}} + \frac{c^{5/16}}{N^{11/16}} + \frac{c^{1/9}}{N^{1/3}}\Big).
\end{aligned}
\end{equation}
If $\mI = \mJ = \{1, \ldots, N\}$, then \cref{eq:bilinear-forms-balanced-lengths} also holds without the constraint $(m, n, c) = 1$.
\end{theorem}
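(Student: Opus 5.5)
We plan to reduce the bilinear form to sums of quadratic characters, which is the connection advertised in the abstract. The starting point is the identity obtained by applying Cauchy--Schwarz in $m$ and opening the square:
\[
\sum_m \Big|\sum_n \beta_n S(am,n;c)\Big|^2 \le \|\beta\|^2 \cdot \max \ldots
\]
This naive form is too lossy, so we work with a smoothed version. We first complete the $m$-sum via a smooth majorant of $\mI$, which gives
\[
\sum_{n_1,n_2} \beta_{n_1}\bar\beta_{n_2} \sum_{m} w(m/N)\, S(am,n_1;c)\,\overline{S(am,n_2;c)} .
\]
Opening both Kloosterman sums and using Poisson summation in $m$ modulo $c$ leads to a sum of the form
\[
\frac{N}{c}\sum_{|h|\lesssim c^{1+\eps}/N} \hat w(hN/c)\, T(h;n_1,n_2), \qquad
T(h;n_1,n_2)=\sum_{\substack{x,y \ (c)\\ a(x-y)\equiv h}} e\Big(\frac{n_1\bar x - n_2\bar y}{c}\Big).
\]
The term $h=0$ gives the diagonal contribution $\ll N\|\beta\|^2 c$ together with the coprimality considerations; this is where $(m,n,c)=1$, or the full interval $\{1,\dots,N\}$, is used. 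For $h\ne0$, $T$ is a complete sum in one variable $x$ of $e((n_1\bar x - n_2\overline{x-\bar a h})/c)$. Stationary phase, or an exact evaluation via the substitution $x\mapsto$ roots of a quadratic, expresses it through a Gauss-type sum. Concretely, $T(h;n_1,n_2)$ equals $\sqrt{c}$ times a sum over solutions of $n_1(x-\bar a h)^2 \equiv n_2 x^2$, weighted by quadratic characters $\left(\frac{\cdot}{c}\right)$ of a discriminant such as $n_1n_2$ or $h^2 n_1 n_2 - \ldots$, times additive phases $e(\pm 2\sqrt{n_1 n_2}\ldots/c)$. For general $c$ this requires a $p$-adic stationary phase at each prime power, with separate care for the small and non-squarefree parts.

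We then have a trilinear expression in $(h,n_1,n_2)$ involving the Jacobi symbol of a quadratic polynomial in the variables together with a phase. The plan is to exploit the quadratic character by applying Cauchy--Schwarz once more to remove $\beta$. This is followed by a Burgess-type or Heath-Brown large sieve estimate for real characters: $\sum_{n_1,n_2}\gamma_{n_1}\delta_{n_2}\left(\frac{n_1n_2 \cdots}{\cdot}\right)$ is controlled by Heath-Brown's quadratic large sieve, which gives savings of the shape $(MN)^{\eps}(M+N)$. Alternatively we use Burgess in $h$ with the $r$-th moment optimized (say $r=2$ or $3$). Balancing the resulting terms over the choices of amplification parameters produces the three terms $N^{1/8}c^{-3/32}$, $N^{5/16}c^{-3/16}$ and $N^{2/3}c^{-7/18}$. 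The exponent $1/32$ at $N=\sqrt c$ strongly suggests a Burgess-type argument with $r=2$ after two Cauchy--Schwarz steps: $(1/4)\cdot(1/8)$ loss structure.

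The main obstacle is step two: the exact evaluation of $T(h;n_1,n_2)$ for arbitrary $c$ as a quadratic character times a phase, and then eliminating the phase $e(2\sqrt{\cdot}/c)$. Without eliminating the phase, the character sum method does not apply. I would first try to absorb the phase by reparametrizing. Writing $n_1n_2 \equiv t^2$ implicitly turns the count into one of solutions $t$, so the sum becomes $\sum_t e(t\cdot h'/c)\#\{n_1n_2\equiv t^2\}$. A final Poisson step or a divisor-type bound then converts this into character sums. Prime-power and even parts of $c$ are handled by cruder trivial bounds, which is acceptable since the final bound carries $c^{o(1)}$ losses.
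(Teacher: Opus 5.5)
There is a genuine gap: the quadratic-character evaluation at the centre of your plan is false for squarefree moduli, including $c=p$ prime, which is exactly where the theorem is already new. For $c=p$ and $k\equiv\bar a h\not\equiv 0$, your complete sum $T(h;n_1,n_2)=\sum_{x\ne 0,k} e\big((n_1\bar x-n_2\overline{x-k})/p\big)$ is not of Gauss or Sali\'e type. The substitution $x=k/(1-t)$ gives exactly $T(h;n_1,n_2)=e\big((n_1+n_2)\bar k/p\big)S(n_1\bar k,n_2\bar k;p)-1$, which is a Kloosterman sum again. The stationary-phase evaluation you describe, with critical points solving $n_1(x-k)^2\equiv n_2x^2$, exists only modulo $p^j$ with $j\ge 2$.

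So no Jacobi symbol ever appears on your route. What remains is the P\'olya--Vinogradov completion argument, essentially \cref{polyaV}, and it gives nothing in the critical range. The $h\neq 0$ terms contribute about $c^{3/2}\|\beta\|_1^2\le c^{3/2}N\|\beta\|^2$ to $\sum_m|\sum_n\beta_nS(am,n;c)|^2$, which for $N=\sqrt c$ is exactly the trivial bound.

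The structural reason is that after one Cauchy--Schwarz the pair $(n_1,n_2)$ still carries the arbitrary weights $\beta_{n_1}\bar\beta_{n_2}$. Neither Weil nor Burgess can then be applied in those variables. Heath-Brown's quadratic large sieve concerns characters to varying moduli and says nothing for a fixed modulus $c$.

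Three further steps also fail:
\begin{itemize}
\item The square-full part $c_2$ cannot be handled trivially at cost $c^{o(1)}$, since for $c=p^j$ it is all of $c$. The paper pays a factor $c_2$ here and needs a separate input for large $c_2$.
\item The three exponents are asserted rather than derived.
\item In particular, $N^{2/3}c^{-7/18}$ does not come from any Burgess-type argument.
\end{itemize}

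The missing idea is to apply Cauchy--Schwarz twice, i.e.\ bound the fourth moment $\Tr((KK^*)^2)$ of the Kloosterman matrix. Via Pascadi's representation $\rho_c^\circ$ of $\SL_2(\Z/c\Z)$, this becomes $\sum z_1(h_1)\cdots z_4(h_4)\chi_c^\circ(g)$ with $g=T^{\bar ah_1}ST^{h_2}ST^{\bar ah_3}ST^{h_4}S$ (\cref{prop:kloosterman-to-characters}). All four variables are now short dual variables $|h_i|\ll c^{1+\eps}/N$ with bounded weights, so there are no arbitrary coefficients and no phase to remove.

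The quadratic character then comes from counting fixed points on $\P^1(\F_p)$: $\chi_p^\circ(g)=\big(\frac{\Tr(g)^2-4}{p}\big)$ for $g\not\equiv\pm I$ (\cref{lem:conn-to-quadratic}). The paper proceeds as follows:
\begin{itemize}
\item Amplification reduces to characters modulo $c/d$ with $c_2\mid d$.
\item Writing $\Tr(g)=xh_4+y$, with $(x,y)$ determined by $(h_1,h_2,h_3)$, H\"older bounds the sum by $\msA^{1/2}\msB^{1/4}\msC_2^{1/4}$.
\item $\msA$ and $\msB$ are bounded by counting, and $\msC_2$ by Weil's bound for $\sum_u\prod_{j\le 4}\big(\frac{u+h_j}{p}\big)$.
\item At $N=\sqrt p$ this gives $p^{3/4}\cdot p^{3/8}\cdot p^{3/4}=p^{2-1/8}$, hence $c^{-1/32}$ after the fourth root.
\end{itemize}
This yields \cref{thm:bilinear-forms-4th-moment}. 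It is combined with Pascadi's sixth-moment bound \cref{thm:non-ab-result}, taken with $d=c_2$ and with $d\approx c_2^{1/2}$, to cover large square-full parts; that combination is the source of the $N^{2/3}c^{-7/18}$ term. The leftover term $N^{1/15}c^{-1/15}$ is absorbed using \cref{lem:bilinear-forms-trivial-bound}.
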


\begin{remark} A more complicated formula for $\mI$ and $\mJ$ of different lengths is given in \cref{thm:bilinear-forms-general} below, which in some cases can be complemented with  \cref{polyaV}, \cref{lem:bilinear-forms-trivial-bound} and  \cref{largesize}. 

In the critical range  $N = \sqrt{c}$, the bound in \cref{thm:bilinear-forms-balanced-lengths} reads
\[
\begin{aligned}
    \mathop{\sum\sum}_{\substack{m \in \mI, n \in \mJ \\ (m, n, c) = 1}} \alpha_m \beta_n S(am, n; c) 
    &\ll
    \|\alpha\| \|\beta\| c^{1-\frac{1}{32}+o(1)}.
\end{aligned}
\]
This is new even for prime moduli $c = p$; it doubles the saving $p^{-1/64}$ of Kowalski--Michel--Sawin \cite{kowalski2017bilinear} (an improvement of the method in \cite{kowalski2020stratification} may give $p^{-1/60}$). For general moduli $c$, the recent  work of Mili\'cevi\'c--Qin--Wu \cite{milicevic2025bilinear} saves $c^{-1/100}$ in the square-root range. 

Our result beats the trivial bound $\|\alpha\|\|\beta\| c^{o(1)} \min(c, N\sqrt{c})$ in the range
\[
    N \in (c^{13/28+\eps}, c^{7/12-\eps}).
\]
The lower range $N > c^{13/28} = c^{1/2 - 1/28}$ 
improves both on \cite{milicevic2025bilinear}, which gives $N > c^{1/2 - 1/42}$, 
and on \cite[Theorem 7.8]{pascadi2025nonabelian}, which gives $N > c^{1/2 - 1/82}$; for prime $c = p$, the record remains $N > p^{3/8}$ due to Kowalski--Michel--Sawin \cite{kowalski2020stratification}. The upper range $N < c^{7/12}$ is to our knowledge also the best one available for general moduli.  
Thus, for general $c$,  \eqref{eq:bilinear-forms-balanced-lengths} gives the best result in all important aspects.

Note that in general the condition $(m, n, c) = 1$ in \eqref{eq:bilinear-forms-balanced-lengths} cannot be dropped, for instance if $c$ is even and $\mI =  \mJ = \{c/2\}$. This is different from the situation where bilinear forms in $S(mn, 1, c)$ are considered. In applications, it is often easy to switch from one setting to the other. 

The proof of \cref{thm:bilinear-forms-balanced-lengths} relies primarily on a new connection to sums of quadratic Dirichlet characters; the argument for prime moduli is sketched in \cref{subsec:sketch-prime-moduli}. To obtain a uniform result for general moduli, we combine these ideas with the approach of \cite{pascadi2025nonabelian}, which uses the representation theory of $\SL_2(\Z/c\Z)$; this is possible due to an interplay between certain characters of $\SL_2(\Z/c\Z)$ and $(\Z/c\Z)^\times$, as explained in \cref{subsec:sketch-hybrid-approach}.  In particular, \cref{thm:bilinear-forms-balanced-lengths} does not rely on the results of Kowalski--Michel--Sawin \cite{kowalski2017bilinear,kowalski2020stratification}, Blomer--Mili\'cevi\'c \cite{blomer2015second}, or Mili\'cevi\'c--Qin--Wu \cite{milicevic2025bilinear}.

\subsection{Application 1: moments of twisted \texorpdfstring{$L$}{L}-functions} One of the key applications of bilinear forms with Kloosterman sums is an asymptotic formula for the second moment of twisted $L$-functions. In fact, this problem prompted   the breakthrough result \cite{kowalski2017bilinear} for prime moduli. Here 
prove the following result. If $\lambda_1(n)$, $\lambda_2(n)$ denote the Hecke eigenvalues of two cusp forms $f_1$, $f_2$, we define the following Euler products:
\begin{align*}
  & P(s)
  = \prod_{p \mid q} 
  \Big(1 - \frac{\lambda_1(p^2)}{p^s} + \frac{\lambda_1(p^2)}{p^{2s}} - \frac{1}{p^{3s}}\Big) \Big(1 - \frac{1}{p^{2s}}\Big)^{-1},\\ \nonumber
  & Q(s) 
  = \prod_{p \mid q} 
  \Big(1 - \frac{\lambda_1(p )\lambda_2(p )}{p^s} + \frac{\lambda_1(p^2 ) + \lambda_2(p^2 ) }{p^{2s}} - \frac{\lambda_1(p )\lambda_2(p )}{p^{3s}}  + \frac{1}{p^{4s}}\Big) \Big(1 - \frac{1}{p^{2s}}\Big)^{-1}.   \nonumber
\end{align*}
For a positive integer $q \not\equiv 2$ (mod 4) let $\phi^{\ast}(q)$ denote the number of primitive characters modulo $q$.

\begin{theorem}\label{secondmoment} For $j = 1, 2$ let $f_j$ be both holomorphic or both Maa{\ss} cuspidal Hecke eigenforms for ${\rm SL}_2(\mathbb{Z})$. 
Assume that their root numbers satisfy $\epsilon(f_1)\epsilon(f_2) = 1$. Then for any $q \not\equiv 2 \pmod{4}$ and any $\varepsilon > 0$ we have
$$\frac{1}{\phi^*(q)} \sum_{\substack{\chi \pmod{q}\\ \chi \text{ {\rm  primitive}}}} L(1/2, f_1 \times \chi) L(1/2, f_2 \times \bar{\chi}) = \frac{2}{\zeta(2)} M(f_1, f_2, q) + O(q^{-1/90 + \varepsilon})$$
where
$$M(f_1, f_2, q) =  \begin{cases}
\displaystyle P(1) L(1, {\rm sym}^2 f_1) \Big( \log q + c +\frac{P'(1)}{P(1)}   \Big), &f_1=f_2,\\
Q(1) L(1, f_1 \times f_2), &f_1\neq f_2, \end{cases}$$
and $c$ is a constant depending only on $f_1$ (not on $q$). 
\end{theorem}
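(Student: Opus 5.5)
We follow the strategy of Kowalski--Michel--Sawin and Blomer--Fouvry--Kowalski--Michel--Mili\'cevi\'c--Sawin for the second moment of twisted $L$-functions, replacing the prime-moduli bilinear estimates by \cref{thm:bilinear-forms-balanced-lengths}, which holds for all $q$.

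\emph{Approximate functional equation.} First apply the approximate functional equation to $L(1/2, f_1\times\chi)L(1/2,f_2\times\bar\chi)$. The root numbers involve $\epsilon(f_1)\epsilon(f_2)\,\tau(\chi)^2\tau(\bar\chi)^2/q^2 = \epsilon(f_1)\epsilon(f_2)$, so the hypothesis $\epsilon(f_1)\epsilon(f_2)=1$ gives a symmetric expression
\[
2\sum_{m,n}\frac{\lambda_1(m)\lambda_2(n)\chi(m)\bar\chi(n)}{\sqrt{mn}}\,V\Big(\frac{mn}{q^2}\Big).
\]
Average over primitive $\chi$ using $\sum^*_\chi \chi(m)\bar\chi(n) = \sum_{d\mid q,\ d\mid m-n}\phi(d)\mu(q/d)$ for $(mn,q)=1$.

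\emph{Diagonal term.} The diagonal $m=n$ produces the main term. We compute $\sum_{(n,q)=1}\lambda_1(n)\lambda_2(n)n^{-1-2s}$, which gives $L(1+2s, f_1\times f_2)/\zeta(2+4s)$ with the Euler factors at $p\mid q$ removed. These factors are exactly $P$ and $Q$ (together with the $(1-p^{-2s})^{-1}$ correction). Shifting the contour to $\Re s<0$ picks up the residue at $s=0$: a double pole when $f_1=f_2$, giving $\log q+c+P'/P$, and a simple pole otherwise. The factor $2/\zeta(2)$ comes from the symmetrization and the normalization by $\phi^*(q)$, and the remaining integral is $O(q^{-1/2+\eps})$.

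\emph{Off-diagonal terms.} The off-diagonal terms $m\equiv \pm n \pmod d$ with $m\neq n$ are handled by a dyadic decomposition $m\sim M$, $n\sim N$ with $MN\le q^{2+\eps}$, which splits into three regimes.

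1. In the unbalanced range, say $M\le q^{1-\delta}$ small, a shifted convolution / Voronoi summation in $n$ along the progression $n\equiv m \pmod d$ gives the bound $(M/N)^{1/2}q^{\eps}$ plus smaller terms. This follows from standard results, such as the $\GL_2$ bound for $\sum_n\lambda(n)e(an/d)$.

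2. When $N$ is large, Voronoi summation in $n$ modulo $d$ converts the congruence condition into Kloosterman sums. The result is a bilinear form $\sum_{m\sim M}\sum_{h\sim H}\lambda_1(m)\lambda_2(h)S(m,\pm h;d)$ with dual length $H\approx d^2/N$.

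3. In the balanced range $M\approx H\approx \sqrt q$, we write $\lambda_j$ as coefficients $\alpha,\beta$ and apply \cref{thm:bilinear-forms-balanced-lengths}, together with its unequal-length version \cref{thm:bilinear-forms-general}. For $d\mid q$, the $d$ terms with $d\le q^{1-\eta}$ contribute trivially small amounts, and the main case is $d=q$. The coprimality condition $(m,h,q)=1$ is arranged by Möbius inversion over common factors, or by using the remark that in applications one can switch between the two settings.

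\emph{Main obstacle.} The hardest step is the optimization of the ranges. The trivial bound in the Kloosterman regime is roughly $\|\alpha\|\|\beta\|(q+\sqrt{MH})\cdot q^{-1}\sqrt{\cdot}$ after normalization, and the bilinear bound saves $q^{-1/32}$ only near $M=H=\sqrt q$; away from that point it degrades. The balanced-range saving must be matched against the losses from the shifted-convolution/Voronoi treatment in the unbalanced ranges. For that treatment we would use the Pólya--Vinogradov type \cref{polyaV} and \cref{lem:bilinear-forms-trivial-bound}. Choosing the cutoff exponents to equalize the three error sources (small $M$, large $N$, balanced) should yield the power saving $q^{-1/90}$. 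The weaker exponent compared to $1/32$ reflects losses from the smooth weights, the divisor sum over $d$, and the transition ranges, and the bookkeeping for composite $q$ (terms with $(m,q)>1$, $d\ne q$) is routine but lengthy.
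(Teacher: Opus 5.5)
\textbf{Verdict: genuine gap.} Your skeleton is right: approximate functional equation, diagonal main term, and off-diagonal terms $\mathcal{B}^{\pm}(M,N)$ with $M\le N$, $MN\le q^{2+\varepsilon}$, handled by shifted convolutions and by Voronoi plus bilinear Kloosterman bounds. But the proof of this theorem consists precisely of the quantitative off-diagonal bounds and their optimization, and there your proposal is wrong or silent.

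\textbf{The inputs you list are incorrect or missing.}
\begin{itemize}
\item Your regime-1 estimate $(M/N)^{1/2}q^{\varepsilon}$ is not available. Voronoi followed by \cref{polyaV} only gives $q^{o(1)}\big(M^{1/2}q^{1/4}N^{-1/2}+q^{1/2}N^{-1/2}+M^{1/2}q^{-1/4}\big)$, whose first term is already a factor $q^{1/4}$ worse than your claim.
\item The shifted convolution estimate has a different shape, $q^{o(1)}\big((N/M)^{1/4}q^{-1/4}+(N/M)^{1/2}q^{-1/2}+q^{-1/2+2\theta}\big)$. It is effective when $N/M$ is small, not when $M$ is small.
\item You omit the trivial bound $q^{o(1)}N^{\theta}(MN)^{1/2}/q$ and all losses in $\theta=7/64$. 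These are unavoidable for Maa{\ss} forms.
\item The terms $d<q$ are not negligible a priori: their trivial bound is also $(MN)^{1/2}/q$. Following Mili\'cevi\'c--Qin--Wu, after Voronoi one gets a sum over $r\mid q$ and $f\mid g\mid q/(q,r^{\infty})$ of $\frac{f^{\theta}}{q(fgMN^{*})^{1/2}}\big|\sum_{m\asymp M,\,(m,q)=1}\sum_{n\asymp N^{*}}\alpha_m\beta_n S(\pm\overline{fg}n,m;r)\big|$ with $N^{*}\ll fgr^{2}/N$. Into this one inserts \cref{thm:bilinear-forms-general} (with \cref{largesize}) and \cref{polyaV}. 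Only then does monotonicity reduce to $r=q$, $f=g=1$, $N^{*}=q^{2}/N$, using that the smallest power of $r$ in the resulting bound is positive.
\end{itemize}

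\textbf{The exponent is asserted, not derived.} The exponent $1/90$ is the value of the linear program in $(\log_q M,\log_q N)$ built from these three bounds. Its extremal point is $M=q^{43/90}$, $N=q^{43/30}$, so $N^{*}=q^{51/90}$. At that point three bounds all equal $q^{-1/90}$:
\begin{itemize}
\item the shifted convolution bound;
\item the term $M^{1/2}q^{-1/4}$ of \cref{polyaV};
\item the term $(N^{*})^{1/3}q^{-1/5}$ of \cref{thm:bilinear-forms-general}, which originates from \cref{thm:non-ab-result}.
\end{itemize}
There the trivial bound is useless, and the terms produced by the quadratic-character method are strictly smaller.

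So the drop from $1/32$ to $1/90$ comes from this balancing, not from smooth weights or divisor sums. Because the inputs you list are not the correct ones, your sentence that equalizing the error sources ``should yield'' $q^{-1/90}$ is an assertion rather than a derivation. To close the gap you need the three correct bounds, including their $\theta$-dependence, and you need to carry out this optimization.
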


\begin{remark}
    The error term is better than the best known result $q^{-1/144+\varepsilon}$ even in the prime case \cite{blomer2017moments, kowalski2017bilinear}, and improves for general moduli the recent 
    $q^{-1/216+\varepsilon}$    \cite[Theorem 1.2]{milicevic2025bilinear}. In special cases, e.g.\ if $q$ is squarefree and/or if $f_1, f_2$ are holomorphic (and hence the Ramanujan-Petersson conjecture is known), the exponent can be further improved. 

    The mixed case where $f_1$ is holomorphic and $f_2$ is Maa{\ss} can be handled by a small modification of the analysis in \cite[Section 7 \& 8]{blomer2015second} as in \cite[Section 3]{blomer2017moments}. 
 \end{remark}

\end{remark}

\subsection{Application 2: large sieve for exceptional Maa{\ss} forms}

Our second application concerns the $\GL_2$ spectral large sieve, as pioneered by Deshouillers--Iwaniec \cite{deshouillers1982kloosterman}. Via the Kuznetsov formula \cite{kuznetsov1980petersson}, the spectral large sieve leads to bounds for multilinear forms with Kloosterman sums, which have been central to many developments in analytic number theory -- concerning for instance moments of $L$-functions and shifted convolution problems \cite{deshouillers1982power,deshouillers1984power,blomer2017moments,topacogullari2018shifted,chandee2024eighth}, prime factors of polynomials \cite{deshouillers1982greatest,merikoski2023largest,de2020niveau,pascadi2026large,grimmelt2025greatest}, and the distribution of primes in arithmetic progressions \cite{bombieri1986primes,bombieri1987primes2,bombieri1989primes3,maynard2025primes,maynard2025primes2,maynard2025primes3,lichtman2025modification,pascadi2025exponents}. In many of these applications, special care is required in the exceptional spectrum, containing those Maa{\ss} cusp forms which might fail Selberg's eigenvalue conjecture \cite{selberg1965estimation}, i.e., which have eigenvalues $\lambda < \tfrac{1}{4}$ with respect to the hyperbolic Laplacian. We find it convenient to introduce the following notation; we point the reader to \cref{sec:large-sieve} for more details and background.

\begin{notation} \label{not:orthonormal-basis}
Let $q = rs$ where $r, s \in \Z_+$ and $(r, s) = 1$. Let $\ma$ be a cusp of $\Gamma_0(q)\backslash \H$ equivalent to $\tfrac{1}{s}$, and $\sigma_\ma \in \PSL_2(\R)$ be a scaling matrix for $\ma$. Let $(f_j)_{j \ge 1}$ be a complete orthonormal basis of Maa{\ss} cusp forms $f_j : \Gamma_0(q)\backslash\H \to \C$, with Fourier coefficients $(\rho_{j\ma}(n))_{n \in \Z}$ normalized as in \cref{eq:Maass-Fourier-expansion}, and Laplacian eigenvalues $\lambda_j$. We write $\theta_j := \max(0, \tfrac{1}{4}-\lambda_j)^{1/2}$.
\end{notation}

Selberg's eigenvalue conjecture asserts that $\theta_j = 0$, but the best unconditional bound is $\theta_j \le \tfrac{7}{64}$, due to Kim--Sarnak \cite[Appendix 2]{kim2003functoriality}.
The role of exceptional-spectrum large sieve inequalities is to temper the contribution of a factor $X^{2\theta_j}$ on average over a basis, where $X$ is as large as possible; as in \cite[\S 9]{pascadi2025nonabelian}, we can use improved bounds for bilinear forms with Kloosterman sums to achieve larger values of $X$.
Compared to \cite[Theorem 9.3]{pascadi2025nonabelian}, \cref{thm:large-sieve} below gives a uniform result that does not assume a special factorization of the level $q$, and removes the coprimality constraint $(n, q) = 1$.

\begin{theorem}\label{thm:large-sieve}
Let $N \ge \tfrac{1}{2}$ and $(\alpha_n)_{n \sim N}$ be a complex sequence. Using \cref{not:orthonormal-basis}, one has
\[
    \sum_{\lambda_j < 1/4}
    X^{2\theta_j} 
    \Big\vert 
    \sum_{n \sim N} \alpha_n\, \rho_{j\ma}(n)
    \Big\vert^2 
    \ll
    (qN)^{o(1)}
    \Big(1 + \frac{N}{q}\Big) \|\alpha\|^2,
\]
with
\begin{equation}\label{eq:X-choice}
    X = 1 + \frac{q}{N} + 
    \min\Big(\frac{q^{18/11}}{N^{23/11}}, 
    \frac{q^{16/13}}{N^{18/13}},
    \frac{q^{32/29}}{N^{33/29}}
    \Big)
    + 
    \frac{q^2}{N^3}.
\end{equation}
\end{theorem}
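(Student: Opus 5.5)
The plan is to follow the Deshouillers--Iwaniec strategy as refined in \cite[\S 9]{pascadi2025nonabelian}, with \cref{thm:bilinear-forms-balanced-lengths} (and its unbalanced version \cref{thm:bilinear-forms-general}) replacing the bilinear input. First, by duality and positivity, we embed the exceptional sum into a Kuznetsov formula at the cusp pair $(\ma,\ma)$. We choose a nonnegative test function $\phi$ whose Bessel transform satisfies $\hat\phi(t_j)\gg X^{2\theta_j}$ for exceptional $t_j = i\theta_j$, and is $\ge 0$ on the rest of the spectrum. Dropping the nonexceptional and Eisenstein contributions then bounds the left-hand side by
\[
\|\alpha\|^2 + \sum_{\gamma} \frac{1}{\gamma}\Big|\sum_{m,n\sim N}\alpha_m\bar\alpha_n S_{\ma\ma}(m,n;\gamma)\,\phi\Big(\frac{4\pi\sqrt{mn}}{\gamma}\Big)\Big|.
\]
Here $\phi(x)$ behaves like $x^{2\theta}$-weighted $J$-Bessel data, which effectively restricts to moduli $\gamma \lesssim N X$. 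For the cusp $\ma\sim 1/s$, the moduli are $\gamma = s\sqrt{r}\,c'$ with $(c',r)=1$, up to normalisation. The Kloosterman sums $S_{\ma\ma}$ factor, via twisted multiplicativity, into a classical Kloosterman sum $S(\bar{s}m, n; c)$ modulo $c$ times a Ramanujan-type sum. This is exactly the shape $S(am,n;c)$ with $a\in(\Z/c\Z)^\times$ covered by \cref{thm:bilinear-forms-balanced-lengths}. Because that theorem allows general $a$ and arbitrary moduli, no factorization hypothesis on $q$ is needed. The theorem also holds without the condition $(m,n,c)=1$ for initial intervals, which (after a dyadic/interval decomposition and separating variables in $\phi$ by Mellin inversion or Taylor expansion) removes the constraint $(n,q)=1$ from \cite{pascadi2025nonabelian}.

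Next, we split the $\gamma$-sum dyadically, $\gamma\sim C$, with $C$ ranging from $q$ up to about $N X$ (small moduli are absent since $q\mid \gamma$). For each $C$ we bound the inner bilinear form by the minimum of three bounds. The first is the trivial Weil bound \cref{eq:trivial-Weil}, giving $\|\alpha\|^2 N C^{1/2}$. The second is the Fourier bound \cref{eq:trivial-Fourier}, giving $\|\alpha\|^2(C+N)$. The third is \cref{eq:bilinear-forms-balanced-lengths}. Each is then multiplied by the weight $\phi \approx (N/C)^{2\theta}$-type factor and by the count of moduli, about $C/q$ of them, divided by $C$. For the large-sieve shape we need $X^{2\theta}$ to be absorbed: the total contribution at level $C$ must be $\ll (1+N/q)\|\alpha\|^2 (C/N)^{2\theta}\cdot X^{-2\theta}$-compatible, uniformly in $\theta\le 1/2$. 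In practice we require the bound at $C$ to be at most $(1+N/q)\|\alpha\|^2$ whenever $C\le NX$, checking the extreme case $\theta = 1/2$ by interpolation.

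Finally, we solve for the admissible $X$. The Fourier bound yields the terms $1+q/N$, and the regime where the lengths dominate the modulus yields $q^2/N^3$; both are as in the classical result. The three terms in \cref{eq:bilinear-forms-balanced-lengths} each produce one constraint on $X$, giving the three exponents $q^{18/11}/N^{23/11}$, $q^{16/13}/N^{18/13}$ and $q^{32/29}/N^{33/29}$. Taking the best available bound for each $C$ produces the minimum, while the individually admissible regimes add. The main obstacle is the first step, namely the precise reduction of $S_{\ma\ma}$ for a general cusp to $S(am,n;c)$ together with removing the coprimality condition. When $\gcd(n,q)>1$, the terms $(m,n,c)\ne1$ must be handled, either via the initial-interval clause of \cref{thm:bilinear-forms-balanced-lengths}, or by pulling out common factors $d\mid (m,n)$ and applying the theorem to shorter sequences with a gain of $d$. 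Keeping this uniform in $r,s$ is where I expect most of the work to lie; the exponent optimization is routine.
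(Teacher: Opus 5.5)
Your plan is essentially the paper's argument. After the Kuznetsov reduction (taken in the paper from \cite[Proposition 9.2]{pascadi2025nonabelian} and localized to $NX/4\le c\le NX$, $q\mid c$), one bounds $\sum_{m,n\sim N}\alpha'_m\alpha''_n S(m,n;c)$ in two ways: by \cref{lem:bilinear-forms-trivial-bound} to cover $X\ll q/N+q^2/N^3$, and by \cref{thm:bilinear-forms-balanced-lengths} with $\mI=\mJ=\{1,\ldots,\lfloor 2N\rfloor\}$ to cover the three new terms, with $X\le 8$ left to the regular large sieve. Two small corrections here: the $q^2/N^3$ term comes from the Weil bound $N\sqrt{c}$, i.e.\ when the modulus is large compared to $N^2$, not when the lengths dominate; and your extra dyadic sum over $C\le NX$ is harmless, since all the normalized bounds increase with $C$.

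The step you flag as the main obstacle is in fact trivial, and you have misdescribed it. Since $(r,s)=1$, a scaling matrix for $\ma\sim 1/s$ is a normalized Atkin--Lehner matrix $W_r/\sqrt{r}$ (up to a translation), which normalizes $\Gamma_0(q)$. Hence $S_{\ma\ma}(m,n;c)$ is just the classical $S(m,n;c)$ with $q\mid c$, times a phase $e(\beta(n-m))$ that is absorbed into the coefficients. So there is no Ramanujan-sum factor and no uniformity issue in $r,s$. Likewise, no interpolation in $\theta$ is needed, because $X^{2\theta_j}$ enters only through the test function on the spectral side.
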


\begin{remark}
In the results of Deshouillers--Iwaniec \cite[Theorems 2 and 5]{deshouillers1982kloosterman}, one can take $X = 1 + \tfrac{q}{N}$. The value of $X$ in \cref{eq:X-choice} can be improved if the level $q$ has a suitable factorization \cite[Corollary 1.6]{pascadi2025nonabelian}, or if the sequence $(\alpha_n)$ has a special structure \cite{pascadi2026large}, but the main significance of \cref{thm:large-sieve} is obtaining a non-trivial improvement over Deshouillers--Iwaniec in the general setting.

The order of the terms in \cref{eq:X-choice} matches their relevance as $N$ decreases (e.g., the $1$ term dominates when $N > q$, and the $ q^2/N^3$ term dominates when $N < q^{13/27}$). When $N = \sqrt{q}$, the relevant term is $ q^{32/29}/N^{33/29}$ and we have $X \asymp q^{1/2 + 1/29}$, so \cref{thm:large-sieve} saves $q^{2\max_j \theta_j/29}$ over \cite[Theorem 5]{deshouillers1982kloosterman}. 
\end{remark}

\subsection{Sketch of the argument for prime moduli} \label{subsec:sketch-prime-moduli}
In this subsection and the next, we ignore various technical details including smooth weights and $c^{o(1)}$ factors, and we use `$\approx$', `$\lesssim$' to indicate the approximate nature of the estimates. 

Let us informally sketch the proof of \cref{thm:bilinear-forms-balanced-lengths} when $c = p$ is a prime, $N \approx \sqrt{p}$, $\mI = \mJ = (N, 2N] \cap \Z$, and $a = 1$. By two applications of Cauchy--Schwarz (in $m$ and then in the two copies of the $n$ variable), we are left to bound 
\begin{equation}\label{eq:sketch-4th-moment}
    \sum_{n_1, n_2, n_3, n_4 \sim \sqrt{p}}
    S(n_1, n_2; p) S(n_2, n_3; p) S(n_3, n_4; p) S(n_4, n_1; p)
    \lesssim
    p^{4-4\delta},
\end{equation}
with $\delta = \tfrac{1}{32}$.
Note that the pointwise Weil bound still matches the trivial bound where $\delta = 0$.

We expand the Kloosterman sums and swap sums to reach
\[
    \sum_{x_1, x_2, x_3, x_4 \in \F_p^\times} 
    \prod_{j \pmod{4}} \sum_{n_j \sim \sqrt{p}} e\Big(\frac{n_j (x_j + \bar x_{j-1})}{p}\Big)
    \lesssim p^{4-4\delta}.
\]
Evaluating the linear sums over $n_j$, writing $h_j = x_j + \bar x_{j-1}$, and taking out the zero frequencies $h_j = 0$ leaves us with essentially
\[
    \sum_{h_1, h_2, h_3, h_4 \sim \sqrt{p}} 
    \Big( \#\{x_1, x_2, x_3, x_4 \in \F_p^\times : \forall j \pmod{4}: x_j + \bar x_{j-1} = h_j\}
    - 1
    \Big)
    \lesssim p^{2-4\delta},
\]
where we could subtract a main term of $1$ due to cancellation in the underlying smooth weights of the $h_j$-variables (coming from the fact that the dual variables $n_j$ are nonzero). Substituting $x_2, x_3, x_4$ in the equations $x_j + \bar x_{j-1} = h_j$ leads to a quadratic congruence in $x_1 \in \F_p^\times$,
to which the number of solutions is typically one plus the Legendre symbol of the discriminant. This ultimately brings us to a quadratic character sum of the shape
\begin{equation}\label{eq:sketch-char-sum}
    \sum_{h_1, h_2, h_3, h_4 \sim \sqrt{p}}
    \Big(\frac{(h_1h_2h_3h_4 - (h_1+h_3)(h_2+h_4) + 2)^2 - 4}{p}\Big)
    \lesssim p^{2-4\delta}.
\end{equation}
Once again, the trivial bound corresponds to $\delta = 0$, so we only need a small amount of cancellation.

We then group $h_1, h_2, h_3$ to create complete variables $x := h_1h_2h_3 - h_1 - h_3$ and $y := 2-(h_1+h_3)h_2$. To amend the sparse support of $(x; y) \in \F_p^2$ (roughly of size $p^{3/2}$), we apply H\"older's inequality with parameters $\tfrac{1}{2}$, $\tfrac{1}{4}$, $\tfrac{1}{4}$. This bounds the left-hand side of \cref{eq:sketch-char-sum} by $\msA^{1/2} \msB^{1/4} \msC^{1/4}$, where 
\begin{gather*}
    \msA := \sum_{x, y \pmod{p}} \sum_{\substack{h_1, h_2, h_3 \sim \sqrt{p} \\ h_1h_2h_3-h_1-h_3 \equiv x \pmod{p} \\ 2-(h_1+h_3)h_2 \equiv y \pmod{p}}} 1,
    \qquad\quad 
    \msB := \sum_{x, y \pmod{p}} \Big\vert\sum_{\substack{h_1, h_2, h_3 \sim \sqrt{p} \\ h_1h_2h_3-h_1-h_3 \equiv x \pmod{p} \\ 2-(h_1+h_3)h_2 \equiv y \pmod{p}}} 1\Big\vert^2,
    \\
    \msC := 
    \sum_{x, y \pmod{p}} 
    \Big\vert\sum_{h_4 \sim \sqrt{p}} \Big(\frac{(xh_4+y)^2-4}{p}\Big) \Big\vert^4.
\end{gather*}
We immediately have $\msA \approx p^{3/2}$, and an elementary treatment gives the sharp bound $\msB \lesssim p^{3/2}$ (a more refined analysis, involving the Weil bound for Kloosterman sums, leads to a better bound for $\msB$ when the initial length is $N < \sqrt{p}$; see \cref{prop:B-sum-bound-alt}). After expanding the square in $\msC$, the diagonal terms contribute $\approx p^3$, while the off-diagonal terms exhibit square-root cancellation in $x, y$ due to Weil's bound for quadratic character sums, giving $\msC \lesssim p^3$. Combining these estimates produces an admissible bound in \cref{eq:sketch-char-sum} as long as
\[
    p^{\frac{3}{2} \cdot \frac{1}{2}} \cdot p^{\frac{3}{2} \cdot \frac{1}{4}} \cdot p^{3 \cdot \frac{1}{4}} \le p^{2-4\delta}
    \qquad \iff \qquad 
    \delta \le \frac{1}{32}.
\]

\subsection{A hybrid approach for general moduli} \label{subsec:sketch-hybrid-approach}

Generalizing the argument in \cref{subsec:sketch-prime-moduli} requires a more abstract formulation.
The sum in \cref{eq:sketch-4th-moment} is the fourth moment of eigenvalues of the matrix $(S(m, n; p))_{m, n \sim \sqrt{p}}$, which gives a natural upper bound for (the fourth power of) its spectral norm. In \cite{pascadi2025nonabelian}, the more general matrix $K_c := (S(m, n; c) \one_{(m, n, c) = 1})_{m, n \sim \sqrt{c}}$ was related to the values of a certain representation $\rho_c^\circ$ of $\SL_2(\Z/c\Z)$. Writing $\chi_c^\circ := \Tr\, \rho_c^\circ$, we can very roughly expand
\[
    \Tr(K_c^4) \approx 
    c^2
    \sum_{|h_1|,|h_2|,|h_3|,|h_4| \le \sqrt{c}} \chi_c^\circ(T^{h_1}S T^{h_2}S T^{h_3}S T^{h_4}S),
    \qquad 
    T := \begin{pmatrix} 1 & 1 \\ 0 & 1 \end{pmatrix},\ 
    S := \begin{pmatrix} 0 & -1 \\ 1 & 0 \end{pmatrix}.
\]
If $c = p$ is a prime, then for any $g \in \SL_2(\F_p)$, $\chi_p^\circ(g)$ can be written explicitly in terms of the Legendre symbol modulo $p$ of the discriminant of $g$ (see \cref{eq:conn-to-quadratic-prime}); this brings us to the same character sum as in \cref{eq:sketch-char-sum}.
A similar argument applies if $c$ is square-free.

However, finding an explicit expression for $\chi_c^\circ(g)$ in terms of quadratic characters becomes difficult in the depth aspect, i.e., when $c$ is divisible by large powers of primes. We circumvent this issue using a non-abelian amplification step, borrowing ideas from \cite{pascadi2025nonabelian}. This leaves us with a character sum modulo the square-free part of $c$, at the cost of larger diagonal terms (see \cref{eq:amplification}).

The result of the argument outlined so far is given in \cref{thm:bilinear-forms-4th-moment}, which works well when the square-full part of $c$ is not too large. To handle the remaining ranges, we directly use the bound for bilinear forms from \cite[Theorem 7.1]{pascadi2025nonabelian}, which depends on the factorization of the modulus (notably, the starting point of this bound is a sixth rather than a fourth moment of eigenvalues).

\begin{remark} 
The bound \cite[Theorem 7.1]{pascadi2025nonabelian} cannot obtain a saving over the trivial bounds when $c = p$ is prime. The connection between $\chi_p^\circ$ and the Legendre symbol is a key new input in our work, and it is the refined analysis of the resulting character sums that makes such savings possible.
\end{remark}

\begin{remark}
Using higher moments of eigenvalues in \cref{eq:sketch-4th-moment}, or different choices of parameters in the application of H\"older's inequality, does not appear to improve our final results in the critical ranges.
\end{remark}

\section{Notation and preliminaries} \label{sec:prelims}


\subsection{General notation}
We use the standard asymptotic notation from analytic number theory, indicating all dependencies of implicit constants through subscripts (when no parameter is indicated, the implicit constant is absolute). 
In expressions like $x^{o(1)}$, the implicit function depends on $x$ and is understood to be real; thus $f(x) \ll x^{o(1)}$ is equivalent to $\forall \eps > 0 : f(x) \ll_\eps x^\eps$, and $f(x) \gg x^{-o(1)}$ is equivalent to $\forall \eps > 0 : f(x) \gg_\eps x^{-\eps}$. The signs of the $o(1)$ quantities do not influence the formal meaning of such statements, but they are suggestive of the translation to the $\eps$-statements. 

We write $\Z_+$ for the set of positive integers. An interval $\mI \subset \Z$ refers to a set of consecutive integers. We use the notation $n \sim N$ for $n \in \Z \cap (N, 2N]$. Given $c \in \Z_+$ and $m, n \in \Z$ or $\Z/c\Z$, we write $(n, c)$ and $(m, n, c)$ for their greatest common divisors; note that these only depend on the residues of $m, n \pmod{c}$. We mention two more unusual pieces of notation associated with the greatest common divisor. In Section \ref{sec6} we write $(a, b^{\infty})$ to mean $\lim_{n \rightarrow \infty} (a, b^n)$. Lemma \ref{lem-expsum} features the following notation: For an integer $g \not= 0$ and a positive integer $c = \prod_p p^{e_p}$, we write
\begin{equation}\label{sqrtgcd}
(g, c^{1/2}) := \prod_p p^{\min(v_p(g),   e_p/2)}.
\end{equation}

For odd $c \in \Z_+$ and $a \in \Z$, we define the Jacobi symbol by
\begin{equation}\label{eq:jacobi-symbol}
    \Big(\frac{a}{c}\Big) := \prod_{\substack{\text{prime }p 
    \\ p^k \mid c,\ p^{k+1} \nmid c}} \Big(\frac{a}{p}\Big)^k,
\end{equation}
where $(\tfrac{a}{p})$ is the Legendre symbol.
For $n \in \Z_+$, we write $\tau(n)$ for the divisor-counting function obeying the divisor bound $\tau(n) \ll n^{o(1)}$, and we let
\begin{equation}\label{eq:nu-function}
    \nu_n := \prod_{\text{prime }p \mid n} \frac{-1}{p^2-1}.
\end{equation}
This is a weight that arises naturally in \cite[Proposition 4.10]{pascadi2025nonabelian} and therefore makes its way into some of our estimates.

Given $c \in \Z_+$ and $x \in (\Z/c\Z)^\times$ (or $x \in \Z$ with $(x, c) = 1$), we write $\bar{x} \in \Z$ for an inverse of $x$ modulo $c$.
For $t \in \R$, we also write $e(t) := \exp(2\pi i t)$. So for $m, n \in \Z$ and $c \in \Z_+$, one can write the Kloosterman and Ramanujan sums as
\[
    S(m, n; c) := \sum_{x \in (\Z/c\Z)^\times} e\Big(\frac{mx + n\bar{x}}{c}\Big),
    \qquad\qquad 
    r_m(c) := S(m, 0; c).
\]
The Kloosterman sums satisfy the Weil bound (see e.g. \cite[Corollary 11.12]{iwaniec2004analytic})
\begin{equation}\label{eq:Weil-bound}
    |S(m, n; c)| \le \tau(c) \sqrt{(m, n, c) c}.
\end{equation}

The spectral norm of a matrix $A \in \C^{m \times n}$ is
\begin{equation}\label{eq:spectral-norm}
    \|A\| = \max_{\substack{v \in \C^n \\ \|v\| \le 1}} \|Av\|
    =
    \max_{\substack{v \in \C^n,\, w \in \C^m \\ \|v\|, \|w\| \le 1}} |w^TAv|.
\end{equation}
Writing $A^*$ for the conjugate transpose of $A$, we have
\begin{equation}\label{eq:trace-method}
    \|A\| \le \Tr\Big((AA^*)^k\Big)^{\frac{1}{2k}}
\end{equation}
for any $k \in \Z_+$.

We write the Fourier transform of an $L^1$-function $f : \R \to \C$ as 
\[
    \hat{f} : \R \to \C,
    \qquad 
    \hat{f}(\xi) := 
    \int_{-\infty}^\infty f(t)\, e(-t\xi) dt.
\]
If $f$ is a Schwartz function, then so is $\hat{f}$, and Poisson summation reads $\sum_{n \in \Z} f(n) = \sum_{n \in \Z} \hat{f}(n)$.

We write $U(V)$ for the space of unitary transformations of a finite-dimensional complex Hilbert space $V$.
Given a finite group $G$ and a (not necessarily irreducible) representation $\rho : G \to U(V)$, we write $\dim \rho := \dim V$. Given a function $f : G \to \C$, we write $\hat{f}(\rho) : V \to V$ for the map
\begin{equation}\label{eq:Fourier-tr-nonab}
    \hat{f}(\rho) := \sum_{g \in G} f(g) \rho(g).
\end{equation}
We also write $\hat{G}$ for a complete set (up to isomorphism) of irreducible unitary representations of $G$. Given $\rho' \in \hat{G}$, we write $\Mult(\rho', \rho)$ for the multiplicity of $\rho'$ in $\rho$. Note that as characters of $G$ we have 
\begin{equation}\label{eq:trace-dec}
    \Tr \rho = \sum_{\rho' \in \hat{G}} \Mult(\rho', \rho) \Tr \rho'. 
\end{equation}
If $\rho = R$ is the (left-)regular representation $G$, we have $\Mult(\rho', R) = \dim \rho'$.

\subsection{\texorpdfstring{$\SL_2$}{SL2} and its representation theory} 
We write
\begin{equation}\label{eq:generators}
    I := \begin{pmatrix} 1 & 0 \\ 0 & 1 \end{pmatrix},
    \qquad 
    T := \begin{pmatrix} 1 & 1 \\ 0 & 1 \end{pmatrix},
    \qquad 
    S := \begin{pmatrix} 0 & -1 \\ 1 & 0 \end{pmatrix}
    \qquad 
    \in \SL_2(\Z),
\end{equation}
and note that $S^2 = -I$. Any congruence of $2 \times 2$ matrices refers to a set of $4$ entry-wise congruences, so for example $T^c \equiv I \pmod{c}$.

For $c \in \Z_+$, we recall that $|\SL_2(\Z/c\Z)| \asymp c^3$, and we let
\[
    \PSL_2(\Z/c\Z) := \SL_2(\Z/c\Z)/Z(\SL_2(\Z/c\Z)),
\]
where
\begin{equation}\label{eq:center}
    Z(\SL_2(\Z/c\Z)) = \left\{\gamma I : \gamma \in \Z/c\Z, \gamma^2 = 1\right\},
    \qquad 
    |Z(\SL_2(\Z/c\Z))| \ll c^{o(1)}.
\end{equation}
We also recall the projective line modulo $c$: if $\sim$ denotes the equivalence relation generated by $(a; b) \sim (\alpha a, \alpha b)$, for $\alpha \in (\Z/c\Z)^\times$, we have
\[
    \P^1(\Z/c\Z) := \left\{(a; b) \in (\Z/c\Z)^2: (a, b, c) = 1 \right\}/_\sim.
\]
The group $\PSL_2(\Z/c\Z)$ (and, through it, $\SL_2(\Z/c\Z)$), acts on $\P^1(\Z/c\Z)$ by M\"obius transformations.

\begin{notation}[Special representations \cite{pascadi2025nonabelian}] \label{not:non-abelian-chars}
For $c \in \Z_+$, we borrow from \cite[\S 4.1]{pascadi2025nonabelian} the construction of certain representations of $\SL_2(\Z/c\Z)$. Specifically, $\rho_c$ is the permutation representation associated to the action of $\SL_2(\Z/c\Z)$ on $\P^1(\Z/c\Z)$ (see \cite[Definition 4.1]{pascadi2025nonabelian}), and $\rho_c^\circ$ is a certain subrepresentation of $\rho_c$ (see \cite[Definition 4.5]{pascadi2025nonabelian}). We write $\chi_c := \Tr\, \rho_c$ and $\chi_c^\circ := \Tr\, \rho_c^\circ$.
We may evaluate $\rho_c, \rho_c^\circ, \chi_c, \chi_c^\circ$ at elements of $\SL_2(\Z)$ (or $\SL_2(\Z/c'\Z)$ where $c \mid c'$) by reducing them modulo $c$.
\end{notation}

\begin{remark}
The construction of $\rho_c^\circ$ in \cite[\S 4.1]{pascadi2025nonabelian} is non-trivial (it is the restriction of $\rho_c$ to a tensor product of local pieces, each of which is the complement of a certain fixed-point space). The key idea is that $\rho_c^\circ$ `sifts out' the contribution of small-dimensional irreducible representations to $\rho_c$; in particular, the trivial representation has multiplicity one in $\rho_c$ and zero in $\rho_c^\circ$ (unless $c = 1$). We will mainly need the following properties, which appear or are implicit in \cite{pascadi2025nonabelian}.
\end{remark}

\begin{lemma}[Properties of special representations \cite{pascadi2025nonabelian}] \label{lem:non-abelian-chars}
Let $g \in \SL_2(\Z)$ and $c, c_1, c_2 \in \Z_+$.
\begin{itemize}
    \item[$(i)$.] If $c = c_1c_2$ with $(c_1, c_2) = 1$, then $\chi_c(g) = \chi_{c_1}(g) \chi_{c_2}(g)$ and $\chi_c^\circ(g) = \chi_{c_1}^\circ(g) \chi_{c_2}^\circ(g)$.
    \item[$(ii)$.] $\sum_{d \mid c} \chi_d^\circ(g) = \chi_c(g)$ is the number of fixed points of $g$ in $\P^1(\Z/c\Z)$.
    \item[$(iii)$.] $\dim \rho_c^\circ \asymp c$, and all irreducible subrepresentations of $\rho_c^\circ$ have dimensions $\gg c^{1-o(1)}$.
\end{itemize}
\end{lemma}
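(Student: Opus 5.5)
The plan is to recall the construction of \cite[\S 4.1]{pascadi2025nonabelian} explicitly and check each property locally, i.e. for prime powers first, then extend by multiplicativity. Write $c = \prod_p p^{e_p}$.

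\emph{Step 1: the permutation representation.} By the Chinese remainder theorem, $\P^1(\Z/c\Z) \cong \prod_{p^{e}\| c} \P^1(\Z/p^e\Z)$ compatibly with the actions of $\SL_2(\Z/c\Z) \cong \prod_p \SL_2(\Z/p^{e}\Z)$. Hence $\rho_c \cong \bigotimes_p \rho_{p^{e_p}}$, where an element $g \in \SL_2(\Z)$ acts diagonally through its reductions. Taking traces gives $\chi_c(g) = \prod_p \chi_{p^{e_p}}(g)$. The trace of a permutation representation counts fixed points, which is the second half of $(ii)$.

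\emph{Step 2: the subrepresentation.} By \cite[Definition 4.5]{pascadi2025nonabelian}, $\rho_c^\circ$ is the tensor product over $p$ of the local pieces $\rho_{p^{e}}^\circ$. Each local piece is the orthogonal complement in $\rho_{p^e}$ of the subspace of functions on $\P^1(\Z/p^e\Z)$ pulled back from $\P^1(\Z/p^{e-1}\Z)$ (a copy of $\rho_{p^{e-1}}$). Multiplicativity of $\chi_c^\circ$ in $(i)$ is then immediate from the trace of a tensor product. Locally we get $\chi_{p^e} = \chi^\circ_{p^e} + \chi_{p^{e-1}}$, hence by telescoping $\chi_{p^e} = \sum_{k\le e}\chi^\circ_{p^k}$, with $\chi^\circ_1 = 1$. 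Multiplying over $p$ and using $(i)$ turns the product of these local sums into $\sum_{d\mid c}\chi_d^\circ(g) = \chi_c(g)$. This completes $(ii)$.

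\emph{Step 3: dimensions.} $|\P^1(\Z/p^e\Z)| = p^e(1+1/p)$, so $\dim\rho^\circ_{p^e} = p^e(1+1/p) - p^{e-1}(1+1/p)$ for $e\ge2$, and $= p$ for $e=1$. Either way this is $\asymp p^e$ up to a factor in $[1/2,2]$. Multiplying over $p$ gives $\dim\rho_c^\circ = c\prod_{p\mid c} O(1)^{\pm1}$. This is $\asymp c$ only up to $c^{o(1)}$ factors in general, and presumably the precise statement absorbs this. For the irreducible constituents, I will show that every irreducible constituent of $\rho^\circ_{p^e}$ has dimension $\gg p^e$. Constituents of $\rho_{p^e}$ not factoring through level $p^{e-1}$ are the ones in the complement. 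By the standard classification of representations of $\SL_2(\Z/p^e\Z)$ (or Clifford theory over the abelian kernel $\ker(\SL_2(\Z/p^e\Z)\to\SL_2(\Z/p^{e-1}\Z))$), any primitive representation of level exactly $p^e$ has dimension $\ge \tfrac12 p^{e-2}(p^2-1)\gg p^e$. For $e=1$ the constituents are the Steinberg representation (dimension $p$) and, in the odd case, pieces of dimension $(p\pm1)/2$. An irreducible constituent of a tensor product over coprime prime powers is a tensor product of local irreducibles, so its dimension is $\prod_p \gg p^{e_p}$, giving $\gg c^{1-o(1)}$ since the implied constants multiply to at most $C^{\omega(c)} = c^{o(1)}$.

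The main obstacle is the lower bound on irreducible dimensions in Step 3 for $p=2$ and small exponents, where Clifford-theoretic orbit counting is delicate. I would handle this by quoting the corresponding statement in \cite[\S 4]{pascadi2025nonabelian}, where the quasirandomness-type bound is established, rather than re-deriving it.
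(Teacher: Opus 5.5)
Your proposal follows the paper's proof. Parts (i) and the fixed-point half of (ii) come from taking traces in the tensor-product and permutation descriptions of $\rho_c,\rho_c^\circ$. The divisor-sum identity comes from the local relation $\chi_{p^k}=\chi_{p^k}^\circ+\chi_{p^{k-1}}$ together with telescoping and multiplicativity. The hard part of (iii), the lower bound on irreducible dimensions, is ultimately quoted from \cite[Proposition 4.6]{pascadi2025nonabelian}, exactly as in the paper.

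Two corrections. First, your hedge on $\dim\rho_c^\circ$ is unnecessary. Your own formula simplifies to $\dim\rho_{p^e}^\circ=p^e(1-p^{-2})$ for $e\ge 2$ (and $=p$ for $e=1$), so $\dim\rho_c^\circ=c\prod_{p^2\mid c}(1-p^{-2})\in[6c/\pi^2,c]$. This is genuinely $\asymp c$ with absolute constants, with no $c^{o(1)}$ loss. Second, by $2$-transitivity $\rho_p^\circ$ is just the Steinberg representation, so the $(p\pm1)/2$-dimensional pieces you mention do not occur; this does not affect the bound.
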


\begin{proof}
Part $(i)$ follows directly by taking traces in \cite[(4.3) and (4.5)]{pascadi2025nonabelian}. The identity $\sum_{d \mid c} \chi_d^\circ = \chi_c(g)$ from part $(ii)$ reduces to a local statement by part $(i)$. Locally, the identity
\[
    \sum_{j=0}^k \chi_{p^j}^\circ(g) = \chi_{p^k}(g),
    \qquad 
    \forall p \text{ prime}, k \in \Z, k \ge 0
\] 
follows from the facts that $\chi_1(g) = \chi_1^\circ(g) = 1$ (since we identify an empty tensor product of Hilbert spaces with $\C$) and that for any prime power $p^k > 1$ we have
\[
    \chi_{p^k}(g) = \chi_{p^k}^\circ(g) + \chi_{p^{k-1}}(g).
\]
In turn, the last identity follows directly from \cite[Definition 4.5 and Lemma 4.3]{pascadi2025nonabelian}. The fact that $\chi_c(g)$ is the number of fixed points of $g$ in $\P^1(\Z/c\Z)$ is immediate from the fact that $\rho_c(g)$ is the permutation map associated to the action of $g$ on $\P^1(\Z/c\Z)$.

Finally, part $(iii)$ follows directly from \cite[Proposition 4.6]{pascadi2025nonabelian}.
\end{proof}

\section{Non-abelian and quadratic characters} \label{sec:non-abelian-quadratic}

\subsection{From Kloosterman sums to non-abelian characters}

In this subsection, we reduce Type-II bounds for bilinear forms with Kloosterman sums to estimating a certain sum involving characters of $\SL_2(\Z/c\Z)$, following \cite[\S 4]{pascadi2025nonabelian}. We also amplify, following \cite[\S 5]{pascadi2025nonabelian}, to further reduce to characters of $\SL_2(\Z/(c/d)\Z)$, for a suitable divisor $d \mid c$. We recall \cref{not:non-abelian-chars}, \cref{eq:nu-function}, and \cref{eq:generators}. 

\begin{proposition}[Fourier analysis $+$ Amplification \cite{pascadi2025nonabelian}] \label{prop:kloosterman-to-characters}
Let $c = c_1c_2$ where $c_1, c_2 \in \Z_+$, $c_1$ is square-free, $c_2$ is square-full, and $(c_1, c_2) = 1$. 
Let $a \in (\Z/c\Z)^\times$, $M, N \in \Z \cap [1, c]$, and $\mI, \mJ \subset \Z$ be intervals with $|\mI| = M$, $|\mJ| = N$. Let $\eps > 0$ and $H_1 := 2c^{1+\eps}M^{-1}$, $H_2 := 2c^{1+\eps} N^{-1}$. Then there exist absolutely-bounded complex numbers $z_1(h), \ldots, z_{2k}(h) \ll 1$ such that for any complex sequences $(\alpha_m)_{m \in \mI}$, $(\beta_n)_{n \in \mJ}$, and any $k \in \Z_+$, one has
\begin{equation}\label{eq:kloosterman-to-characters}
    \Big\vert \sum_{m \in \mI} \sum_{n \in \mJ} \alpha_m \beta_n S(am, n; c) \nu_{(m,n,c_1)} \one_{(m,n,c_2) = 1} \Big\vert
    \le
    \|\alpha\| \|\beta\|
    \Big(\frac{c^{1+2\eps}}{\sqrt{H_1H_2}} \msS^{\frac{1}{2k}} + O_\eps(c^{-100})\Big),
\end{equation}
where $\nu_{(m, n, c_1)}$ is given by \cref{eq:nu-function}, and
\begin{equation}\label{eq:S-sum}
    \msS := \sum_{\substack{h_1, \ldots, h_{2k} \in \Z \\ |h_i| \le H_j\, \forall i \equiv j \pmod{2} \\ g := (-1)^k T^{a_1h_1} S \cdots T^{a_{2k} h_{2k}}S}} z_1(h_1) \cdots z_{2k}(h_{2k})\, \chi_c^\circ(g),
    \qquad 
    a_i := \begin{cases}
        \bar{a}, &i \equiv 1 \pmod{2}, \\ 
        1, &i \equiv 0 \pmod{2},
    \end{cases}
\end{equation}
for some $\bar{a} \in \Z$ with $a\bar{a} \equiv 1 \pmod{c}$, where $\chi_c^\circ$ is as in \cref{not:non-abelian-chars}. Moreover, for any $d \in \Z_+$ with $d \mid c$ and $(d, \tfrac{c}{d}) = 1$, one has
\begin{equation}\label{eq:amplification}
    \msS \ll d^{2+o(1)} \sum_{\substack{h_1, \ldots, h_{2k} \in \Z \\ |h_i| \le H_j\,\forall i \equiv j \pmod{2} \\ g := (-1)^k T^{a_1h_1}S \cdots T^{a_{2k}h_{2k}}S}} z_1(h_1) \cdots z_{2k}(h_{2k})\, \one_{g \equiv I \pmod{d}}\, \chi_{c/d}^\circ(g).
\end{equation}
\end{proposition}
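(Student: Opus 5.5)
The plan is to follow \cite[\S 4--5]{pascadi2025nonabelian} closely, in three steps: build the matrix, apply the trace method with smoothed Poisson summation, then amplify.

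\textbf{Step 1: the matrix and the representation.} Consider the $M \times N$ matrix
\[
K := \big(S(am, n; c)\, \nu_{(m,n,c_1)}\, \one_{(m,n,c_2)=1}\big)_{m \in \mI,\, n \in \mJ}.
\]
By \cref{eq:spectral-norm}, the left-hand side of \cref{eq:kloosterman-to-characters} is at most $\|\alpha\|\|\beta\|\,\|K\|$. I would then use the key structural input of \cite[Proposition 4.10]{pascadi2025nonabelian}. That result expresses the entry $S(am,n;c)\nu_{(m,n,c_1)}\one_{(m,n,c_2)=1}$ as a multiple of order $c$ of a matrix coefficient
\[
\langle \rho_c^\circ(T^{\bar a m'} S\, T^{n'} \cdots)\, v, w\rangle
\]
of $\rho_c^\circ$. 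The mechanism is to open the Kloosterman sum and realize the additive characters $e(mx/c)$ as the action of $T^m$ on delta functions on $\P^1(\Z/c\Z)$; the weight $\nu$ arises from projecting to $\rho_c^\circ$. The twist by $a$ is absorbed by conjugating with the diagonal matrix $\mathrm{diag}(a,1)$, which accounts for the exponents $a_i$.

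\textbf{Step 2: trace method and Poisson summation.} Insert smooth majorants $\phi_1(m/M)$ and $\phi_2(n/N)$. Apply \cref{eq:trace-method} with exponent $k$, so that $\|K\|^{2k} \le \Tr((KK^*)^k)$, and expand the trace as a cyclic sum over $m_1,n_1,\dots,m_k,n_k$. Each variable $m_i$ enters only through $T^{\bar a m_i}$, which depends on $m_i \bmod c$, and similarly for $n_i$. Poisson summation modulo $c$ in each variable therefore yields dual frequencies $h_i$ weighted by $\frac{M}{c}\hat\phi_1(h_iM/c)$ and $\frac{N}{c}\hat\phi_2(h_iN/c)$. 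These are negligible, namely $O(c^{-100})$, unless $|h_i|\le H_1$ or $|h_i|\le H_2$ respectively.

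The conjugation by $S$ that comes from Fourier-transforming the $\delta$-basis produces the alternating word $(-1)^kT^{a_1h_1}S\cdots T^{a_{2k}h_{2k}}S$. The trace of $\rho_c^\circ$ of this word is $\chi_c^\circ(g)$. The functions $z_i(h)$ are the normalized $\hat\phi$ values, which are $\ll 1$. Collecting the normalizing factors should give
\[
\Tr \ll \big(c^{1+2\eps}/\sqrt{H_1H_2}\big)^{2k}\,|\msS| + O(c^{-100}).
\]
I expect this bookkeeping — matching the powers $(MN)^k c^{2k}/c^{2k}\cdots$ against $(c/\sqrt{H_1H_2})^{2k}$ — to be where the main care is needed, but it is routine given \cite{pascadi2025nonabelian}.

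\textbf{Step 3: amplification, \cref{eq:amplification}.} Since $(d, c/d)=1$, \cref{lem:non-abelian-chars}(i) gives $\chi_c^\circ=\chi_d^\circ\chi_{c/d}^\circ$, and correspondingly $\rho_c^\circ\cong\rho_d^\circ\otimes\rho_{c/d}^\circ$ as representations of $\SL_2(\Z/d)\times\SL_2(\Z/(c/d))$. Write $\msS=\Tr \hat f(\rho_c^\circ)$, where $f$ is the weighted pushforward of the words, and decompose $\rho_d^\circ$ into irreducibles $\pi$. Each $\pi$ has dimension $\gg d^{1-o(1)}$ by \cref{lem:non-abelian-chars}(iii), and the number of them counted with multiplicity is $\ll d^{o(1)}$, since $\dim\rho_d^\circ\asymp d$. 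Positivity of $\hat f$ would need a Gram structure, which I would arrange by writing $\msS$ as a trace of $AA^*$, as is available from the $2k$-th moment with the words split in half.

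Granting that, bound $\Tr(\pi\otimes\rho_{c/d}^\circ)$ for each irreducible $\pi$ by $\dim\pi \cdot \Tr \hat f(R_d\otimes\rho_{c/d}^\circ)$, using that $\pi$ occurs in the regular representation $R_d$ with multiplicity $\dim\pi$. Then use $\chi_{R_d}(g)=|\SL_2(\Z/d)|\,\one_{g\equiv I \pmod d}$. Combining the factors gives
\[
d^{o(1)} \cdot d^{-1} \cdot d^{3} = d^{2+o(1)}
\]
times the sum with $\one_{g\equiv I \pmod d}\,\chi_{c/d}^\circ(g)$. The main obstacle is justifying this positivity step, i.e.\ replacing $\Tr\hat f(\pi\otimes\cdot)$ by the regular representation. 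Here I would rely on \cite[\S5]{pascadi2025nonabelian}, which treats exactly this situation.
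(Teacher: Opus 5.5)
Your overall architecture matches the paper's. You bound the form by $\|\alpha\|\|\beta\|\|K\|$, pass to $\SL_2(\Z/c\Z)$ via \cite{pascadi2025nonabelian}, apply the trace method, and amplify using $\Mult(\rho',\rho_d^\circ)\ll d^{o(1)}\dim\rho'/d$ together with $\Tr R(g)=|\SL_2(\Z/d\Z)|\one_{g\equiv I \pmod d}$. (In Step 3 the comparison should read $\Tr\hat f(\pi\otimes\rho_{c/d}^\circ)\le(\dim\pi)^{-1}\Tr\hat f(R_d\otimes\rho_{c/d}^\circ)$, not $\dim\pi$ times it; your count $d^{o(1)}\cdot d^{-1}\cdot d^{3}$ already uses the correct factor.)

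However, the step you call the main obstacle is a genuine gap, and your order of operations in Steps 1--2 creates it. Amplification needs the following nonnegativity for \emph{every} irreducible $\rho'$ of $\SL_2(\Z/d\Z)$, not only those occurring in $\rho_d^\circ$, because you then complete to the regular representation:
\[
\sum_{h_1,\dots,h_{2k}} z_1(h_1)\cdots z_{2k}(h_{2k})\,\chi'(g)\,\chi_{c/d}^\circ(g)\ \ge\ 0 .
\]
Your $h$-sum comes from $\Tr((KK^*)^k)$ for the specific Kloosterman matrix $K$, i.e.\ only for $\rho_c^\circ$. For $\rho'\otimes\rho_{c/d}^\circ$ there is no matrix whose trace it is. Moreover, you Poisson-sum after expanding the trace and then discard the tails $|h_i|>H_j$. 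The truncated weights are then no longer exact autocorrelations, so even for $\rho_c^\circ$ the truncated sum is only approximately a Gram trace. Deferring to \S 5 of the reference does not fill this.

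The paper avoids the problem by reversing the order. It first uses \cite[Corollary 4.11]{pascadi2025nonabelian} to get
\[
\|K\|\le c^{1+2\eps}\|\hat F(\rho_c^\circ)\|+O_\eps(c^{-100}),
\qquad
F=\frac{4}{H_1H_2}\sum_{|h_j|\le H_j/2}w_1(h_1)w_2(h_2)\one_{T^{\bar a h_1}ST^{h_2}},
\]
with bounded, compactly supported weights. Only then does it apply the trace method, to $\hat F(\rho)$ for an \emph{arbitrary} unitary $\rho$. Set $z_i(\tilde h)=\frac{4}{H_j}\sum_{h-h'=\tilde h}w_j(h)\overline{w_j(h')}$; these are automatically supported in $|\tilde h|\le H_j$, so there are no tails. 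Then $(H_1H_2)^k\Tr\big((\hat F(\rho)\hat F(\rho)^*)^k\big)$ equals the weighted sum of $\Tr\rho(g)$ exactly, with the same $z_i$ for every $\rho$. Taking $\rho=\rho'\otimes\rho_{c/d}^\circ$ gives the nonnegativity immediately, and $\rho=R\otimes\rho_{c/d}^\circ$ gives the final sum. As a bonus, the $O_\eps(c^{-100})$ error sits outside the $2k$-th root, uniformly in $k$.

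Separately, your description of the Fourier step is inaccurate. If $m_i$ entered through $T^{\bar a m_i}$, Poisson summation in $m_i$ would produce spectral projections of $\rho(T)$, not group elements. In fact $m$ indexes the additive characters $e_m$ on the affine chart of $\P^1(\Z/c\Z)$, which are eigenvectors of $T$, and $S(am,n;c)$ is essentially $\langle\rho(S)e_n,e_{am}\rangle$. It is $\sum_m\phi(m/M)^2e_{am}e_{am}^*$ that becomes a convolution operator $\sum_h(\cdots)\rho(T^{\bar a h})$ with $|h|\lesssim c/M$. This is also where $\bar a$ enters. Conjugating by $\mathrm{diag}(a,1)$ does not fix $S$, so it cannot absorb the twist.
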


\begin{proof}
By the characterization of spectral norms from \cref{eq:spectral-norm}, we have
\begin{equation}\label{eq:bil-sum-sp-norm}
    \Big\vert \sum_{m \in \mI} \sum_{n \in \mJ} \alpha_m \beta_n S(am, n; c) \nu_{(m,n,c_1)} \one_{(m,n,c_2) = 1} \Big\vert 
    \le 
    \|\alpha\| \|\beta\| \|K\|,
\end{equation}
where $K \in \C^{\mI \times \mJ}$ be the $M \times N$ matrix, indexed by $m \in \mI$ and $n \in \mJ$, with entries given by
\[
    K_{m,n} := S(am, n; c) \nu_{(m,n,c_1)} \one_{(m,n,c_2)=1}.
\]
We recall the notation for the Fourier transform on finite groups from \cref{eq:Fourier-tr-nonab}.
By \cite[Corollary 4.11]{pascadi2025nonabelian} with $H_j \gets H_j/2$, we can bound
\begin{equation}\label{eq:pass-to-reps}
    \|K\| \le c^{1+2\eps} \|\hat{F}(\rho_c^\circ)\| + O_\eps(c^{-100}),
\end{equation}
where $\rho_c^\circ$ is as in \cref{not:non-abelian-chars}, and $F : \SL_2(\Z/c\Z) \to \C$ is given by
\[
    F := \frac{4}{H_1H_2} \sum_{\substack{|h_1| \le H_1/2 \\ |h_2| \le H_2/2}} w_1(h_1) w_2(h_2) \one_{T^{\bar{a}h_1} S T^{h_2}},
\]
for some absolutely-bounded complex numbers $w_1(h), w_2(h) \ll 1$.

Now let $\rho$ be any representation of $\SL_2(\Z/c\Z)$ and $\chi := \Tr\, \rho$. By \cref{eq:trace-method} we have
\begin{equation}\label{eq:apply-trace-method}
    \|\hat{F}(\rho)\|^{2k} \le \Tr\Big( \Big(\hat{F}(\rho) \hat{F}(\rho)^*\Big)^k \Big),
\end{equation}
where the trace can be expanded as
\[
\begin{aligned}
    \frac{4^{2k}}{(H_1 H_2)^{2k}}
    \sum_{\substack{h_1, h_1', \ldots, h_{2k}, h_{2k}' \in \Z \\ |h_i|, |h_i'| \le H_j/2\ \forall i \equiv j \pmod{2}}}
    w_1(h_1) w_2(h_2) \bar{w_2(h_2') w_1(h_3')} w_1(h_3) w_2(h_4) \cdots 
    \bar{w_2(h_{2k}') w_1(h_1')}
    \\
    \times\, 
    \chi \Big(T^{\bar{a}h_1}ST^{h_2}T^{-h_2'} S^{-1}T^{-\bar{a} h_3'} T^{\bar{a}h_3}ST^{h_4} \cdots T^{-h_{2k}'} S^{-1}T^{-\bar{a}h_1'} \Big).
\end{aligned}
\]
For any $i \in \{1, \ldots, 2k\}$ and $j \in \{1, 2\}$ such that $i \equiv j \pmod{2}$, and any $\tilde{h} \in \Z$, we define
\begin{equation}\label{eq:zi}
    z_i(\tilde h) := \frac{4}{H_j} \sum_{\substack{|h|, |h'| \le H_j/2 \\ h - h' = \tilde h}} w_j(h) \bar{w_j(h')}.
\end{equation}
Using the fact that $S^{-1} = -S$ and the notation from \cref{eq:S-sum,eq:zi}, we can rewrite the trace as
\begin{equation}\label{eq:trace-computation}
\begin{aligned}
    \Tr\Big( \Big(\hat{F}(\rho) \hat{F}(\rho)^*\Big)^k \Big)
    =
    \frac{1}{(H_1H_2)^k}
    \sum_{\substack{h_1, \ldots, h_{2k} \in \Z \\ |h_i| \le H_j\, \forall i \equiv j \pmod{2} \\ g := (-1)^kT^{a_1h_1}S\cdots T^{a_{2k}h_{2k}}S}} z_1(h_1) \cdots z_{2k}(h_{2k})\, \chi(g).
\end{aligned}
\end{equation}
Combining \cref{eq:bil-sum-sp-norm,eq:pass-to-reps,eq:apply-trace-method,eq:trace-computation} with $\rho = \rho_c^\circ$ completes the proof of \cref{eq:kloosterman-to-characters} (in particular, the sum $\msS$ from \cref{eq:S-sum} is nonnegative).

We now perform the amplification step; this is similar to \cite[Proposition 5.1]{pascadi2025nonabelian}, but significantly simpler due to the assumption that $(d, \tfrac{c}{d}) = 1$. By \cref{lem:non-abelian-chars}$(i)$, we have
\[
    \msS = \sum_{\substack{h_1, \ldots, h_{2k} \in \Z \\ |h_i| \le H_j\, \forall i \equiv j \pmod{2} \\ g := (-1)^k T^{a_1h_1} S \cdots T^{a_{2k} h_{2k}}S}} z_1(h_1) \cdots z_{2k}(h_{2k})\, \chi_d^\circ(g)\, \chi_{c/d}^\circ(g).
\]
By splitting $\chi_d^\circ$ into irreducible characters, we can write
\begin{equation}\label{eq:before-amplif}
    \msS = \sum_{\substack{\rho' \in \hat{\SL_2}(\Z/d\Z) \\ \chi' := \Tr(\rho')}} \Mult(\rho', \rho_d^\circ) 
    \sum_{\substack{h_1, \ldots, h_{2k} \in \Z \\ |h_i| \le H_j\, \forall i \equiv j \pmod{2} \\ g := (-1)^k T^{a_1h_1} S \cdots T^{a_{2k} h_{2k}}S}} z_1(h_1) \cdots z_{2k}(h_{2k})\, \chi'(g)\, \chi_{c/d}^\circ(g).
\end{equation}
The inner sum in \cref{eq:before-amplif} is nonnegative for any $\rho' \in \hat{\SL_2}(\Z/d\Z)$, due to \cref{eq:trace-computation} with $\rho = \rho' \otimes \rho_{c/d}^\circ$.
Moreover, by \cref{lem:non-abelian-chars}$(iii)$, we have $\Mult(\rho', \rho_d^\circ) \ll d^{o(1)}$, and whenever this multiplicity is nonzero, $\dim \rho' \gg d^{1-o(1)}$. If $R$ denotes the (left-)regular representation of $\SL_2(\Z/d\Z)$, it follows that 
\[
    \Mult(\rho', \rho_d^\circ) \ll d^{o(1)} \frac{\dim \rho'}{d}
    =
    d^{o(1)} \frac{\Mult(\rho', R)}{d}.
\]
We conclude from this, \cref{eq:before-amplif}, and \cref{eq:trace-dec} that
\[
\begin{aligned}
    \msS &\ll \frac{d^{o(1)}}{d} \sum_{\substack{\rho' \in \hat{\SL_2}(\Z/d\Z) \\ \chi' := \Tr(\rho')}} \Mult(\rho', R) 
    \sum_{\substack{h_1, \ldots, h_{2k} \in \Z \\ |h_i| \le H_j\, \forall i \equiv j \pmod{2} \\ g := (-1)^k T^{a_1h_1} S \cdots T^{a_{2k} h_{2k}}S}} z_1(h_1) \cdots z_{2k}(h_{2k})\, \chi'(g)\, \chi_{c/d}^\circ(g)
    \\
    &= \frac{d^{o(1)}}{d} 
    \sum_{\substack{h_1, \ldots, h_{2k} \in \Z \\ |h_i| \le H_j\, \forall i \equiv j \pmod{2} \\ g := (-1)^k T^{a_1h_1} S \cdots T^{a_{2k} h_{2k}}S}} z_1(h_1) \cdots z_{2k}(h_{2k})\, \Tr R(g)\, \chi_{c/d}^\circ(g).
\end{aligned}
\]
But $\Tr R(g)$ is simply $|\SL_2(\Z/d\Z)| \one_{g \equiv I \pmod{d}}$, which proves \cref{eq:amplification} in light of $|\SL_2(\Z/d\Z)| \asymp d^3$. In particular, the right-hand side of \cref{eq:amplification} is nonnegative.
\end{proof}

\subsection{Passing to quadratic characters}
A key input in this work is that when $c$ is odd and square-free, $\chi_c^\circ(g)$ can be directly related to the Jacobi symbol modulo $c$.
To formalize this, we need the following notation (which will only be used in this subsection).

\begin{notation}[Levels] \label{not:level}
We define the \emph{level} of $g \in \SL_2(\Z)$ by
\[
    \lev(g) := 
    \begin{cases}
        0, & g \in \{\pm I\}, \\ 
        \max \left\{d \in \Z_+ : g \in \ker\Big(\SL_2(\Z) \to \PSL_2(\Z/d\Z)\Big)\right\},
        & \text{otherwise.}
    \end{cases}
\]
\end{notation}

Note that by \cref{eq:center}, for $g \in \SL_2(\Z)$ and $d \in \Z_+$, we have the equivalences
\begin{equation}\label{eq:level-charact}
\begin{aligned}
    d \mid \lev(g)
    \quad &\iff \quad
    g \in \ker(\SL_2(\Z) \to \PSL_2(\Z/d\Z))
    \\
    &\iff \quad 
    \exists \gamma \in \Z/d\Z,\ \gamma^2 = 1 :
    g \equiv \gamma I \pmod{d}.
\end{aligned}
\end{equation}

\begin{lemma} \label{lem:conn-to-quadratic}
If $c$ is odd and square-free, then for any $g \in \SL_2(\Z)$, one has the explicit expression
\[
    \chi_c^\circ(g) = d \Big(\frac{\Tr(g)^2-4}{c/d}\Big),
    \qquad 
    \text{for } 
    d = (\lev(g), c).
\]
in terms of the Jacobi symbol from \cref{eq:jacobi-symbol}.
\end{lemma}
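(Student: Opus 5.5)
Both sides of the identity are multiplicative in $c$, so the proof reduces to a single odd prime $p$, where it is a fixed-point count on $\P^1(\F_p)$.

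\textbf{Reduction to primes.} By \cref{lem:non-abelian-chars}$(i)$, $\chi_c^\circ(g) = \prod_{p \mid c} \chi_p^\circ(g)$. On the other side, $d = (\lev(g), c)$ is square-free. By \cref{eq:level-charact}, a prime $p \mid c$ divides $d$ exactly when $g \equiv \pm I \pmod p$; for $g = \pm I$ we have $\lev(g) = 0$, so every prime divides it. The Jacobi symbol modulo $c/d$ factors as a product of Legendre symbols over $p \mid c/d$. Hence it suffices to prove, for each odd prime $p$,
\[
    \chi_p^\circ(g) =
    \begin{cases}
        p, & g \equiv \pm I \pmod p, \\
        \Big(\frac{\Tr(g)^2-4}{p}\Big), & \text{otherwise}.
    \end{cases}
\]

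\textbf{Local computation.} By \cref{lem:non-abelian-chars}$(ii)$ applied to $c = p$, we have $\chi_p^\circ(g) = \chi_p(g) - \chi_1^\circ(g) = \chi_p(g) - 1$. Here $\chi_p(g)$ is the number of fixed points of $g$ acting on $\P^1(\F_p)$, so I count these fixed points case by case.
\begin{itemize}
    \item If $g \equiv \pm I \pmod p$, every point of $\P^1(\F_p)$ is fixed, giving $\chi_p(g) = p+1$ and $\chi_p^\circ(g) = p$.
    \item Otherwise, a fixed point $(x; y)$ is an eigenvector line of $g$, i.e.\ a root in $\P^1(\F_p)$ of the binary quadratic form $g_{21}x^2 + (g_{22}-g_{11})xy - g_{12}y^2$. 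This form is not identically zero, since $g$ is not scalar modulo $p$.
    \item Its discriminant is $(g_{22}-g_{11})^2 + 4g_{12}g_{21} = \Tr(g)^2 - 4\det g = \Tr(g)^2 - 4$.
\end{itemize}
A nonzero binary quadratic form over $\F_p$ with $p$ odd has $1 + \big(\frac{\mathrm{disc}}{p}\big)$ projective zeros. This covers the degenerate case where the leading coefficient $g_{21}$ vanishes, because zeros are counted projectively (including the point at infinity). When the discriminant is $0$ there is a single repeated eigenline, which is exactly the non-scalar unipotent case, so there is no conflict. Hence $\chi_p(g) = 1 + \big(\frac{\Tr(g)^2-4}{p}\big)$, and subtracting $1$ gives the claim.

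\textbf{Anticipated difficulty.} The only real bookkeeping is making sure the definition of level matches the prime-by-prime dichotomy. In particular, $d$ must be exactly the product of the primes $p \mid c$ with $g \equiv \pm I \pmod p$. This holds because $c$ is square-free and the kernel condition is multiplicative across coprime moduli via CRT: at each such $p$ we can take $\gamma = \pm 1$, and the signs may differ between primes. The edge case $g = \pm I$ gives $d = c$ and $\chi_c^\circ(g) = c = \dim \rho_c^\circ$, consistent with the formula since the Jacobi symbol modulo $1$ equals $1$.
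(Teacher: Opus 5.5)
Your proposal is correct and follows essentially the same route as the paper. Both arguments reduce to a single prime via \cref{lem:non-abelian-chars}$(i)$, then use \cref{lem:non-abelian-chars}$(ii)$ to identify $1+\chi_p^\circ(g)$ with the number of fixed points of $g$ on $\P^1(\F_p)$, and count roots of a quadratic whose discriminant is $\Tr(g)^2-4$. The differences are cosmetic: you handle the case $g_{21}\equiv 0 \pmod p$ uniformly by counting projective zeros of the binary form, where the paper splits into the cases $z\neq 0$ and $z=0$ by hand, and you spell out why $d=(\lev(g),c)$ is the product of the primes $p\mid c$ with $g\equiv \pm I \pmod p$, which the paper leaves implicit.
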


\begin{proof}
By the multiplicativity property in \cref{lem:non-abelian-chars}$(i)$, it suffices to show that for any prime $p$, one has
\begin{equation}\label{eq:conn-to-quadratic-prime}
    \chi_p^\circ(g) = 
    \begin{cases} 
    p, & g \equiv \pm I \pmod{p}, \\
    \Big(\frac{\Tr(g)^2-4}{p}\Big), & \text{otherwise.}
    \end{cases}
\end{equation}
By \cref{lem:non-abelian-chars}$(ii)$, $1 + \chi_p^\circ(g)$ is the number of fixed points of $g$ in the projective line $\P^1(\Z/p\Z)$. We write 
\[
    g = \begin{pmatrix}
    x & y \\ z & t
    \end{pmatrix}
    \in 
    \SL_2(\Z/p\Z),
\]
so that $xt - yz = 1$ in $\SL_2(\Z/p\Z)$. Note that $g$ fixes the point $[1 : 0]$ at infinity if and only if $z = 0$. Moreover, for $u \in \Z/p\Z$, $g$ fixes the point $u = [u : 1]$ if and only if $xu+y = (zu+t)u$ (this implies that $zu+t$ is invertible, since otherwise $zu+t = xu+y = 0$ forces $u = 0 = t = y$, contradicting the determinant condition).

\emph{Case 1:} Suppose $z \neq 0$. Then the quadratic equation
\[
    zu^2 + (t-x)u - y = 0
\]
has exactly $1 + \Big(\tfrac{(t-x)^2 + 4yz}{p}\Big)$ solutions $u \in \Z/p\Z$. But $(t-x)^2 + 4yz = (t+x)^2 - 4(xt-yz) = \Tr(g)^2 - 4$, so \cref{eq:conn-to-quadratic-prime} holds.

\emph{Case 2:} Suppose $z = 0$. Then the equation
\[
    (t-x)u - y = 0
\]
has $\one_{t-x \neq 0} + p\one_{t-x = y = 0}$ solutions $u \in \Z/p\Z$, to which we must add the point at infinity. This gives a total of 
\[
    1 + \one_{t-x \neq 0} + p\one_{t-x = y = 0}
\]
fixed points of $g$ in $\P^1(\Z/p\Z)$. If $g \equiv \pm I \pmod{p}$, then this is just $p+1$, so \cref{eq:conn-to-quadratic-prime} holds. If $g \not\equiv \pm I \pmod{p}$, then we cannot have $z = t-x = y = 0$, so we're left with
\[
    1 + \one_{t-x \neq 0}
    =
    1 + \Big(\tfrac{(t-x)^2}{p}\Big)
    =
    1 + \Big(\tfrac{(t-x)^2 + 4yz}{p}\Big)
\]
fixed points, and \cref{eq:conn-to-quadratic-prime} holds once again.
\end{proof}

\begin{proposition}[Passing to quadratic character sums] \label{prop:pass-to-quadratic}
Assuming the setup of \cref{prop:kloosterman-to-characters}, one has
\[
    \msS \ll c_2^{1+o(1)} \sum_{\substack{d \in \Z_+ \\ c_2 \mid d \mid c,\, 2d \nmid c}} d \sum_{\substack{\gamma \in \Z/d\Z \\ \gamma^2 = 1}}
    \Bigg\vert 
    \sum_{\substack{h_1, \ldots, h_{2k} \in \Z \\ |h_i| \le H_j\,\forall i \equiv j \pmod{2} \\ g := T^{a_1h_1}S \cdots T^{a_{2k}h_{2k}}S \\ g \equiv \gamma I \pmod{d}}} z_1(h_1) \cdots z_{2k}(h_{2k}) \Big(\frac{\Tr(g)^2-4}{c/d}\Big)
    \Bigg\vert.
\]
\end{proposition}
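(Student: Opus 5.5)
The plan is to apply the amplification bound \cref{eq:amplification} with the largest admissible divisor that removes all the "bad" parts of $c$, and then to use \cref{lem:conn-to-quadratic} on the remaining odd square-free part.

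\emph{Step 1: amplification.} Let $e := (2, c_1)$ and $c' := c_1/e$, so that $c'$ is odd and square-free. Set $d_0 := c_2 e$. Then $d_0 \mid c$ and $(d_0, c/d_0) = (c_2e, c') = 1$. Applying \cref{eq:amplification} with $d \gets d_0$ gives
\[
    \msS \ll c_2^{2+o(1)} \sum_{h_1,\ldots,h_{2k}} z_1(h_1)\cdots z_{2k}(h_{2k})\, \one_{(-1)^k g \equiv I \pmod{d_0}}\, \chi_{c'}^\circ\big((-1)^k g\big),
    \qquad g := T^{a_1h_1}S\cdots T^{a_{2k}h_{2k}}S.
\]
Here $d_0^{2} \le 4c_2^2$.

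\emph{Step 2: pass to Jacobi symbols without an exact-level condition.} For $c'$ odd and square-free, \cref{lem:conn-to-quadratic} gives $\chi_{c'}^\circ(g') = d'\big(\tfrac{\Tr(g')^2-4}{c'/d'}\big)$ with $d' = (\lev(g'), c')$. The exactness of $d'$ is awkward to keep, so I claim the identity
\[
    \chi_{c'}^\circ(g') = \sum_{d' \mid c'} d'\, \one_{d' \mid \lev(g')} \Big(\frac{\Tr(g')^2-4}{c'/d'}\Big).
\]
To see this, suppose $d' \mid \lev(g')$ but $d' \ne (\lev(g'), c')$. Then some prime $p \mid c'/d'$ divides $\lev(g')$, so $g' \equiv \pm I \pmod p$. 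Hence $\Tr(g')^2 - 4 \equiv 0 \pmod p$, and the Jacobi symbol modulo $c'/d'$ vanishes. So only the maximal $d'$ survives, and the claim holds.

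By \cref{eq:level-charact}, $\one_{d'\mid\lev(g')}$ is the sum over $\gamma' \in \Z/d'\Z$ with $\gamma'^2 = 1$ of $\one_{g' \equiv \gamma' I \pmod{d'}}$; these events are disjoint because $d'$ is odd. The conditions $(-1)^kg \equiv I \pmod{d_0}$ and $(-1)^k g \equiv \gamma' I \pmod{d'}$ combine by CRT into a single condition $g \equiv \gamma I \pmod{d}$, where $d := d_0 d'$ and $\gamma^2 = 1$; the sign $(-1)^k$ is absorbed into $\gamma$. Also $\Tr((-1)^kg)^2 = \Tr(g)^2$, so the sign disappears from the Jacobi symbol.

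\emph{Step 3: bookkeeping.} Apply the triangle inequality over $d'$ and $\gamma$, and enlarge the $\gamma$-sum to all square roots of $1$ modulo $d$. The prefactor becomes $d_0^{2+o(1)} d' \ll c_2^{1+o(1)} \cdot c_2 d' \le c_2^{1+o(1)} d$. The resulting $d = c_2 e d'$ satisfies $c_2 \mid d \mid c$, and $c/d = c'/d'$ is odd, which is exactly the condition $2d \nmid c$. This gives the stated bound.

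The only genuinely delicate point is Step 2. Dropping the exact-level constraint without losing control depends on the observation that the Jacobi symbol kills every non-maximal $d'$. The rest is CRT bookkeeping and tracking the factors of $2$ and the sign $(-1)^k$.
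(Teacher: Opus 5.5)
Your proposal is correct and is essentially the paper's proof. Your $d_0 = c_2(2,c_1)$ is the paper's $\tilde c_2$, and your steps match the paper's one for one: amplify with $d \gets \tilde c_2$, apply \cref{lem:conn-to-quadratic} on the odd square-free part, replace the exact-level condition by $d \mid \lev(g)$ using the vanishing of the Jacobi symbol, expand via \cref{eq:level-charact}, and absorb the sign $(-1)^k$ into $\gamma$. The only cosmetic differences are that you drop the exact-level condition modulo $c'$ and merge the congruences by CRT, whereas the paper first multiplies $d$ by $\tilde c_2$ and then notes that $g \equiv I \pmod{\tilde c_2}$ merely restricts $\gamma$; also, the events $g' \equiv \gamma' I \pmod{d'}$ are disjoint for every $d'$, so oddness is not needed there.
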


\begin{proof}
Let $\tilde c_2$ be either $c_2$ if $2 \nmid c_1$, or $2c_2$ if $2 \mid c_1$; in either case, $c/\tilde c_2$ is odd and square-free, and $c_2 \asymp \tilde c_2$. We apply \cref{eq:amplification} with $d \gets \tilde c_2$ to obtain
\[
    \msS \ll c_2^{2+o(1)} \sum_{\substack{h_1, \ldots, h_{2k} \in \Z \\ |h_i| \le H_j\,\forall i \equiv j \pmod{2} \\ g := (-1)^k T^{a_1h_1}S \cdots T^{a_{2k}h_{2k}}S}} z_1(h_1) \cdots z_{2k}(h_{2k})\, \one_{g \equiv I \pmod{\tilde c_2}}\, \chi_{c/\tilde c_2}^\circ(g).
\]
We then apply \cref{lem:conn-to-quadratic} with $c \gets c/\tilde c_2$ to obtain
\[
    \msS \ll c_2^{2+o(1)} \sum_{d \mid \frac{c}{\tilde c_2}} d \sum_{\substack{h_1, \ldots, h_{2k} \in \Z \\ |h_i| \le H_j\,\forall i \equiv j \pmod{2} \\ g := (-1)^k T^{a_1h_1}S \cdots T^{a_{2k}h_{2k}}S \\ (\lev(g), c/\tilde c_2) = d}} z_1(h_1) \cdots z_{2k}(h_{2k})\, \one_{g \equiv I \pmod{\tilde c_2}} \Big(\frac{\Tr(g)^2-4}{c/(d\tilde c_2)}\Big).
\]
Note that $(\tilde c_2, c/\tilde{c_2}) = 1$ since $c/\tilde{c_2}$ divides $c_1$, the square-free part of $c$. Moreover, the restriction $g \equiv I \pmod{\tilde c_2}$ implies $\tilde c_2 \mid \lev(g)$. In this case, we have
\[
    (\lev(g), c) = (\lev(g), \tilde c_2) \cdot \Big(\lev(g), \frac{c}{\tilde c_2}\Big) = 
    \tilde c_2 \cdot \Big(\lev(g), \frac{c}{\tilde c_2}\Big).
\]
By changing variables $d \gets d/\tilde c_2$ and the triangle inequality, we thus obtain
\[
    \msS \ll c_2^{1+o(1)} \sum_{\substack{d \in \Z_+ \\ \tilde c_2 \mid d \mid c}} d \Bigg\vert \sum_{\substack{h_1, \ldots, h_{2k} \in \Z \\ |h_i| \le H_j\,\forall i \equiv j \pmod{2} \\ g := (-1)^k T^{a_1h_1}S \cdots T^{a_{2k}h_{2k}}S \\ (\lev(g), c) = d}} z_1(h_1) \cdots z_{2k}(h_{2k})\, \one_{g \equiv I \pmod{\tilde c_2}} \Big(\frac{\Tr(g)^2-4}{c/d}\Big) \Bigg\vert.
\]
We claim that the condition $(\lev(g), c) = d$ in the inner sum can be replaced by $d \mid \lev(g)$. Indeed, if $c_2 \mid d \mid \lev(g)$ and $(\lev(g), c) = d' \neq d$, then $c/d'$ divides (but does not equal) the square-free number $c/d$. Thus there exists a prime $p$ which divides $c/d$ but not $c/d'$, so $p \mid d' \mid \lev(g)$. In this case, $\Tr(g)^2 \equiv 4 \pmod{p}$, and the Jacobi symbol vanishes. The condition $d \mid \lev(g)$ can then be expanded by \cref{eq:level-charact} (combined with the triangle inequality) to obtain
\[
    \msS \ll c^{o(1)} c_2 \sum_{\substack{d \in \Z_+ \\ \tilde c_2 \mid d \mid c}} d \sum_{\substack{\gamma \in \Z/d\Z \\ \gamma^2 = 1}} \Bigg\vert \sum_{\substack{h_1, \ldots, h_{2k} \in \Z \\ |h_i| \le H_j\,\forall i \equiv j \pmod{2} \\ g := (-1)^k T^{a_1h_1}S \cdots T^{a_{2k}h_{2k}}S \\ g \equiv \gamma I \pmod{d}}} z_1(h_1) \cdots z_{2k}(h_{2k})\, \one_{g \equiv I \pmod{\tilde c_2}} \Big(\frac{\Tr(g)^2-4}{c/d}\Big) \Bigg\vert.
\]
The condition that $g \equiv I \pmod{\tilde c_2}$ simply restricts the maximum to those $\gamma \equiv 1 \pmod{\tilde c_2}$, and can be ignored.
Finally, the condition that $\tilde c_2 \mid d$ is equivalent to $c_2 \mid d$ and $2d \nmid c$, and changing $g \mapsto (-1)^k g$, $\gamma \mapsto (-1)^k \gamma$ brings us to the desired bound.
\end{proof}

\subsection{Grouping variables}
We now aim to apply H\"older's inequality, to reduce bounding our sum of interest to two counting problems and a character sum with two complete variables. But before this, we need to discard certain terms corresponding to large values of $d$ in \cref{prop:pass-to-quadratic}; the following lemma borrowed from \cite{pascadi2025nonabelian} is helpful.

\begin{lemma}[\cite{pascadi2025nonabelian}] \label{lem:h14-bound}
Let $c \in \Z_+$, $\gamma \in \Z/c\Z$ with $\gamma^2 = 1$, $H_1, H_2 \ge 1$, and $a_1, a_2 \in \Z$ with $(a_1a_2, c) = 1$. Then 
\[
    \sum_{\substack{h_1, h_2, h_3, h_4 \in \Z \\ |h_1|, |h_3| \le H_1,\, |h_2|, |h_4| \le H_2 \\ T^{a_1h_1}S T^{a_2h_2} S T^{a_1h_3} S T^{a_2h_4}S \equiv \gamma I \\ \pmod{c}}}
    1
    \ll 
    c^{o(1)} \Big(1 + \frac{\min(H_1, H_2)^2}{c^2}\Big) \Big(1 + \frac{\max(H_1, H_2)}{c}\Big) \max(H_1, H_2).
\]
\end{lemma}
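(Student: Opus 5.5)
The plan is to turn the matrix congruence into explicit scalar congruences and then count them directly, with no need for any deeper input. First compute
\[
T^{x}S=\begin{pmatrix} x & -1\\ 1 & 0\end{pmatrix},\qquad
A(x,y):=T^xST^yS=\begin{pmatrix} xy-1 & -x\\ y & -1\end{pmatrix},\qquad
A(u,v)^{-1}=\begin{pmatrix} -1 & u\\ -v & uv-1\end{pmatrix}.
\]
Set $x=a_1h_1$, $y=a_2h_2$, $u=a_1h_3$, $v=a_2h_4$. The condition $A(x,y)A(u,v)\equiv \gamma I \pmod c$ is equivalent to $A(x,y)\equiv \gamma A(u,v)^{-1}\pmod c$, since $\gamma$ is central. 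Comparing the four entries gives
\[
xy-1\equiv-\gamma,\qquad -x\equiv \gamma u,\qquad y\equiv-\gamma v,\qquad -1\equiv \gamma(uv-1)\pmod c .
\]
Because $(a_1a_2,c)=1$ and $\gamma^2=1$, these conditions are equivalent to
\[
h_3\equiv-\gamma h_1,\qquad h_4\equiv -\gamma h_2,\qquad a_1a_2h_1h_2\equiv 1-\gamma \pmod c .
\]
The fourth entry condition is then automatic from the first one, given the other two. Hence once $(h_1,h_2)$ is fixed, $h_3$ and $h_4$ each lie in a prescribed residue class mod $c$. This gives at most $(1+H_1/c)(1+H_2/c)\ll(1+H_{\min}/c)(1+H_{\max}/c)$ choices, where $H_{\min}=\min(H_1,H_2)$ and $H_{\max}=\max(H_1,H_2)$.

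It remains to bound the number $\mathcal N$ of pairs $(h_1,h_2)$ with $h_1h_2\equiv b\pmod c$, where $b:=\bar{a_1a_2}(1-\gamma)$. Without loss of generality let $h_1$ be the variable of range $H_{\min}$; the argument is symmetric. For each fixed $h_1$, the congruence $h_1h_2\equiv b$ is either unsolvable or confines $h_2$ to a single residue class modulo $c/(h_1,c)$. So the number of $h_2$ is at most $1+2H_{\max}(h_1,c)/c$; for $h_1=0$ this reads $\ll H_{\max}$. Summing over $h_1$ and using $\sum_{0<|h|\le H}(h,c)\le 2\sum_{d\mid c} d\cdot H/d=2\tau(c)H$, we get
\[
\mathcal N\ll c^{o(1)}\Big(H_{\min}+H_{\max}+\frac{H_{\min}H_{\max}}{c}\Big)\ll c^{o(1)}H_{\max}\Big(1+\frac{H_{\min}}{c}\Big).
\]

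Multiplying the two counts, the total is
\[
\ll c^{o(1)}H_{\max}\Big(1+\frac{H_{\min}}{c}\Big)^2\Big(1+\frac{H_{\max}}{c}\Big).
\]
Since $(1+t)^2\le 2(1+t^2)$, this is at most the claimed bound. The only point needing care is the entrywise comparison that reduces the $2\times2$ congruence to three scalar congruences. The rest is an elementary divisor-sum count, and I expect no real obstacle, apart from the degenerate case $\gamma\equiv1$ at some primes, where $b$ shares factors with $c$. That case is already covered by the uniform bound $1+2H_{\max}(h_1,c)/c$, which does not depend on $b$.
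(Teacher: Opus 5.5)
Your proof is correct. It takes a genuinely different route from the paper only in the sense that the paper gives no argument of its own: it simply invokes \cite[Corollary A.2]{pascadi2025nonabelian}.

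Your entrywise comparison of $A(a_1h_1,a_2h_2)$ with $\gamma A(a_1h_3,a_2h_4)^{-1}$ is accurate. It reduces the word congruence to $h_3\equiv-\gamma h_1$, $h_4\equiv-\gamma h_2$ and $a_1a_2h_1h_2\equiv 1-\gamma \pmod c$, and the fourth entry is indeed redundant. This is the same computation the paper itself carries out modulo $d$ in \eqref{eq:cong-mod-d} and in the proof of \cref{prop:A-sum-bound}, there with $ST^{h_4}S$ moved to the other side.

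Your two counts are also right. Lifting $h_3,h_4$ gives $(1+H_1/c)(1+H_2/c)$. The congruence $h_1h_2\equiv b$ gives $\ll c^{o(1)}\max(H_1,H_2)\bigl(1+\min(H_1,H_2)/c\bigr)$, using $\sum_{0<|h|\le H}(h,c)\le 2\tau(c)H$, and the $h_1=0$ term (only possible when $b\equiv 0$) is absorbed correctly. Since $(1+t)^2\asymp 1+t^2$, the product is exactly the stated bound.

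The citation keeps the paper short and defers the counting to the appendix of \cite{pascadi2025nonabelian}. Your argument is self-contained and elementary, and it shows where each factor comes from. The factor $1+\min(H_1,H_2)^2/c^2$ arises as one factor $1+\min(H_1,H_2)/c$ from the count of $h_1h_2\equiv b$, times another from lifting whichever of $h_3,h_4$ has the smaller range. The factor $1+\max(H_1,H_2)/c$ comes from lifting the other one.
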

\begin{proof}
This follows immediately from \cite[Corollary A.2]{pascadi2025nonabelian}.
\end{proof}

\begin{proposition}[H\"older step] \label{prop:holder-step}
Assume the setup of \cref{prop:kloosterman-to-characters} with $2k = 4$. Then for any $\ell \in \Z_+$, one has
\[
    \msS \ll_\eps c^{2\eps} (cc_2 + H_1H_2)(H_1 + H_2) + c^\eps c_2 \sum_{\substack{d \in \Z_+ \\ c_2 \mid d \mid c,\, 2d \nmid c \\ d \le \min(H_1, H_2)}} d \sum_{\substack{\gamma \in \Z/d\Z \\ \gamma^2 = 1}} 
    \min\Big(\frac{H_1^2 H_2^2}{d^3}, \msA^{1-\frac{1}{\ell}} \msB^{\frac{1}{2\ell}} \msC_\ell^{\frac{1}{2\ell}}\Big),
\]
where
\begin{align}
    \msA = \msA(d,\gamma) &:= \sum_{r \pmod{d}} \sum_{\substack{h_1, h_2, h_3 \in \Z \\ h_1h_2h_3 \neq 0,\ h_1+h_3 \neq 0 \\ |h_1|, |h_3| \le H_1,\, |h_2| \le H_2 \\ T^{\bar{a}h_1}S T^{h_2}S T^{\bar{a}h_3}S T^{r}S \equiv \gamma I \pmod{d}}} 1,
    \label{eq:A-sum}
    \\ 
    \msB = \msB(d,\gamma) &:= \sum_{\substack{x, y \pmod{c/d} \\ r \pmod{d}}} \Bigg\vert \sum_{\substack{h_1, h_2, h_3 \in \Z \\ h_1h_2h_3 \neq 0,\ h_1+h_3 \neq 0 \\ |h_1|, |h_3| \le H_1,\, |h_2| \le H_2 \\ T^{\bar{a}h_1}S T^{h_2}S T^{\bar{a}h_3}S T^{r}S \equiv \gamma I \pmod{d} \\ \bar{a} h_1h_2h_3 - h_1 - h_3 \equiv x \pmod{c/d} \\ (h_1+h_3) h_2 \equiv y \pmod{c/d}}} 1 \Bigg\vert^2,
    \label{eq:B-sum}
    \\ 
    \msC_\ell = \msC_\ell(d,\gamma) &:= 
    \sum_{\substack{x, y \pmod{c/d} \\ r \pmod{d}}} \Bigg\vert
    \sum_{\substack{|h| \le H_2 \\ h \equiv r \pmod{d}}} z_4(h) \Big(\frac{(xh+y)^2-4}{c/d}\Big) \Bigg\vert^{2\ell}.
    \label{eq:C-sum}
\end{align}
\end{proposition}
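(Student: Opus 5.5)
Starting from \cref{prop:pass-to-quadratic} with $2k=4$, I would treat each pair $(d,\gamma)$ separately and bound the inner sum
\[
\Sigma(d,\gamma) := \sum_{\substack{h_1,\dots,h_4 \\ g \equiv \gamma I \pmod d}} z_1(h_1)\cdots z_4(h_4)\Big(\frac{\Tr(g)^2-4}{c/d}\Big).
\]
The first step is to peel off degenerate tuples. These are the tuples with $h_1h_2h_3=0$ or $h_1+h_3=0$, together with the range $d>\min(H_1,H_2)$. They should all be absorbed into the first term $(cc_2+H_1H_2)(H_1+H_2)$.

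\emph{Degenerate tuples.} If $h_2=0$, then $ST^0S=-I$, so $g$ collapses to $\pm T^{\bar a(h_1+h_3)}ST^{h_4}S$. The same collapse happens if $h_1=0$ or $h_3=0$, after cyclic conjugation (the trace is conjugation-invariant). In each case the constraint $g\equiv\gamma I \pmod d$ with a shorter word forces strong congruences. Counting trivially, with the Jacobi symbol bounded by $1$, gives $\ll H_1H_2+\dots$ tuples for $d$ small, and $d$ times this should fit inside $c_2\cdot c\,(H_1+H_2)$.

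If $h_1+h_3=0$, then the trace is (as a polynomial) $\Tr(g)=\bar a^2h_1h_3h_2h_4 - \bar a(h_1+h_3)(h_2+h_4)+2 = -\bar a^2 h_1^2h_2h_4+2$. I would bound this case trivially too: there are $\ll H_1H_2^2$ tuples, times $d\le c/c_2$ summed suitably. I expect to need care here, possibly using the $g\equiv\gamma I \pmod d$ constraint to gain a factor $d$, giving $\ll c_2\cdot d\cdot H_1H_2^2/d$-type bounds that fit $H_1H_2(H_1+H_2)$ after multiplying by $c_2$.

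\emph{Large $d$ and the trivial option in the minimum.} For $d>\min(H_1,H_2)$, the condition $g\equiv \gamma I\pmod d$ is handled by \cref{lem:h14-bound} (with $c\gets d$), giving $\ll c^{o(1)}(1+\min^2/d^2)(1+\max/d)\max$ tuples. Multiplied by $c_2 d$ and summed over $d$, this gives $\ll c^{o(1)}c_2 c(H_1+H_2)$ plus lower order terms; this is the $cc_2(H_1+H_2)$ term. For $d\le \min(H_1,H_2)$, the trivial estimate $H_1^2H_2^2/d^3$ is the first option in the $\min$. I would obtain it from the count of solutions of $g\equiv\gamma I \pmod d$: three of the four $h_i$ are determined modulo $d$ by the fourth. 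This count can also be taken from \cref{lem:h14-bound} in the form $(H_1H_2)^2/d^3$ when $d\le\min H_i$.

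\emph{Main term: Hölder.} The key algebraic input is the identity
\[
\Tr(T^{\bar ah_1}ST^{h_2}ST^{\bar ah_3}ST^{h_4}S) = (\bar a h_1h_2h_3-h_1-h_3)\,\bar a\,h_4 \;-\;\big(\bar a(h_1+h_3)h_2-2\big),
\]
up to the correct signs and normalization. Writing $x=\bar ah_1h_2h_3-h_1-h_3$ and $y$ as the $h_2$-linear term, the trace becomes $xh+y$ after rescaling $h=h_4$. I would check this identity by direct multiplication, absorbing the $\bar a$ factors into $x$ via a unit change of variables.

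For the $\pmod d$ condition, I observe that for fixed $h_1,h_2,h_3$ the constraint depends on $h_4$ only through $r\equiv h_4 \pmod d$. Thus
\[
\Sigma=\sum_{x,y \,(c/d)}\sum_{r \,(d)} \nu(x,y,r)\sum_{h\equiv r \,(d)} z_4(h)\Big(\frac{(xh+y)^2-4}{c/d}\Big),
\]
where $\nu$ counts the triples $(h_1,h_2,h_3)$ (weighted by $z_1z_2z_3\ll1$). Then $\sum \nu\le \msA$ and $\sum\nu^2\le\msB$, and Hölder with exponents $1-\tfrac1\ell,\tfrac1{2\ell},\tfrac1{2\ell}$ gives $\msA^{1-1/\ell}\msB^{1/(2\ell)}\msC_\ell^{1/(2\ell)}$. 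This works via $|\nu|\cdot|C|=|\nu|^{1-1/\ell}\,|\nu|^{1/\ell}|C|$, with $|\nu|^{1/\ell}\le$ the appropriate product.

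The main obstacle, I expect, is the bookkeeping of the degenerate cases, especially $h_1+h_3=0$, within the stated first term.
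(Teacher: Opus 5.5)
Your architecture matches the paper's: pass to quadratic characters, use \cref{lem:h14-bound} both for $d>\min(H_1,H_2)$ and for the trivial option $H_1^2H_2^2/d^3$, remove degenerate tuples, write $\Tr(g)=xh_4+y$, group by $(x,y,r)$, and apply H\"older with exponents $\tfrac{\ell}{\ell-1},2\ell,2\ell$. However, the degenerate case $h_1+h_3=0$, which you flag yourself, is a genuine gap.

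A degenerate class contributes $c_2\sum_d d\cdot\#\{\text{tuples}\}$ to $\msS$. To be absorbed into $(cc_2+H_1H_2)(H_1+H_2)$, you need the count to be $\ll c^{o(1)}H_1H_2(H_1+H_2)/d^2$; then $d\ge c_2$ finishes. Saving a single factor $d$ is all the bottom-left entry $h_2\equiv-\gamma h_4\pmod d$ gives. That leaves $c_2H_1H_2^2$. This is not absorbed by the first term when, say, $c_2$ is large and $H_1=H_2\gg c^{1/2}$, and ``multiplying by $c_2$'' only makes it worse.

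The missing idea is a second, multiplicative saving. The condition $g\equiv\gamma I\pmod d$ is equivalent to $T^{\bar a h_1}ST^{h_2}ST^{\bar a h_3}\equiv-\gamma\bigl(\begin{smallmatrix}1&0\\ h_4&1\end{smallmatrix}\bigr)\pmod d$. Its diagonal entries read $\bar a h_1h_2-1\equiv-\gamma$ and $\bar a h_2h_3-1\equiv-\gamma$. When $h_3=-h_1$, subtracting them gives $2\bar a h_1h_2\equiv 0$, i.e.\ $d\mid 2h_1h_2$. Since $h_1h_2\neq 0$, the product $h_1h_2$ takes $O(H_1H_2/d)$ values, and each yields $c^{o(1)}$ pairs $(h_1,h_2)$ by the divisor bound. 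Then $h_3=-h_1$ and $h_4$ is fixed modulo $d$, giving $O(H_2/d)$ choices since $d\le H_2$. The total is $c^{o(1)}H_1H_2^2/d^2$ tuples, so the contribution is $\ll c^{o(1)}c_2H_1H_2^2/d\le c^{o(1)}H_1H_2^2$.

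Your bookkeeping for $h_1h_2h_3=0$ is also off, though easily repaired. For $h_2=0$ the collapsed word $T^{\bar a(h_1+h_3)}ST^{h_4}S$ forces two congruences, $h_4\equiv 0$ and $h_1+h_3\equiv 0\pmod d$. This gives $O(H_1^2H_2/d^2)$ tuples when $d\le\min(H_1,H_2)$, not $H_1H_2+\dots$. Symmetrically, $h_1=0$ or $h_3=0$ gives $O(H_1H_2^2/d^2)$ tuples. The resulting contributions $c_2H_1^2H_2/d$ and $c_2H_1H_2^2/d$ are absorbed by $H_1H_2(H_1+H_2)$ via $d\ge c_2$. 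They are not absorbed by $cc_2(H_1+H_2)$ as you suggest, since that term need not dominate them.

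With these two counts in place, the rest of your argument goes through as written.
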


\begin{proof}
We apply \cref{prop:pass-to-quadratic} with $2k = 4$. This gives
\begin{equation}\label{eq:S-to-T}
    \msS \ll c_2^{1+o(1)} \sum_{\substack{d \in \Z_+ \\ c_2 \mid d \mid c,\, 2d \nmid c}} d \sum_{\substack{\gamma \in \Z/d\Z \\ \gamma^2 = 1}}
     |\msT(d,\gamma)|,
\end{equation}
where
\[
    \msT(d,\gamma) := \sum_{\substack{h_1, h_2, h_3, h_4 \in \Z \\ |h_1|, |h_3| \le H_1,\, |h_2|, |h_4| \le H_2 \\ g := T^{a_1h_1}S \cdots T^{a_4h_4}S \\ g \equiv \gamma I \pmod{d}}} z_1(h_1) \cdots z_4(h_4) \Big(\frac{\Tr(g)^2-4}{c/d}\Big).
\]
Recall from \cref{prop:kloosterman-to-characters} that $a_1 = a_3 = \bar{a}$, $a_2 = a_4 = 1$, and $z_i(h_i) \ll 1$. So the triangle inequality and \cref{lem:h14-bound} give a trivial bound for $\msT$ of
\begin{equation}\label{eq:T-trivial-bound}
    \msT(d, \gamma) \ll d^{o(1)} \Big(1 + \frac{\min(H_1, H_2)^2}{d^2}\Big) \Big(1 + \frac{\max(H_1, H_2)}{d}\Big) \max(H_1, H_2).
\end{equation}
If $d > \min(H_1, H_2)$, the first parenthesis in \cref{eq:T-trivial-bound} can be ignored. Therefore, using the divisor bound and \cref{eq:center}, we have
\[
\begin{aligned}
    c_2 \sum_{\substack{d \in \Z_+ \\ c_2 \mid d \mid c,\, 2d \nmid c}} d \sum_{\substack{\gamma \in \Z/d\Z \\ \gamma^2 = 1}} |\msT(d,\gamma)|
    &\ll c^{o(1)} c_2 \max_{\substack{d \in \Z_+ \\ c_2 \mid d \mid c,\, 2d \nmid c}} d \Big(1 + \frac{\max(H_1, H_2)}{d}\Big) \max(H_1, H_2)
    \\ 
    &=
    c^{o(1)} c_2 \Big(c + \max(H_1, H_2)\Big) \max(H_1, H_2)
    \\
    &\ll_\eps
    c^{1+2\eps} c_2 \max(H_1, H_2),
\end{aligned}
\]
where we recalled that $\max(H_1, H_2) \ll c^{1+\eps}$ in the setup of \cref{prop:kloosterman-to-characters}. Combining this with \cref{eq:S-to-T} gives
\begin{equation}\label{eq:S-to-small-d}
    \msS \ll_\eps c^{1+2\eps}c_2 \max(H_1, H_2) + c_2^{1+o(1)} \sum_{\substack{d \in \Z_+ \\ c_2 \mid d \mid c,\, 2d \nmid c \\ d \le \min(H_1, H_2)}} d \sum_{\substack{\gamma \in \Z/d\Z \\ \gamma^2 = 1}} |\msT(d,\gamma)|.
\end{equation}
Henceforth, we consider $\msT(d,\gamma)$ in the range
\[
    d \le \min(H_1, H_2). 
\] 
Here, the trivial bound from \cref{eq:T-trivial-bound} reads
\begin{equation}\label{eq:T-trivial-bound-2}
    \msT(d,\gamma) \ll d^{o(1)} \frac{\min(H_1,H_2)^2 \max(H_1,H_2)^2}{d^3}
    =
    d^{o(1)} \frac{H_1^2 H_2^2}{d^3}.
\end{equation}
For later convenience, we aim to remove the contribution to $\msT(d,\gamma)$ of certain degenerate terms, which satisfy $h_1h_2h_3 = 0$ or $h_1 + h_3 = 0$. We have
\begin{equation}\label{eq:remove-0}
\begin{aligned}
    \msT(d, \gamma) = \msT'(d, \gamma) + O\Bigg(\sum_{\substack{h_1, h_2, h_3, h_4 \in \Z \\ h_1h_2h_3 = 0 \text{ or } h_1 + h_3 = 0 \\ |h_1|, |h_3| \le H_1,\, |h_2|, |h_4| \le H_2 \\ T^{\bar{a}h_1}ST^{h_2}ST^{\bar{a}h_3}ST^{h_4}S \equiv \gamma I \pmod{d}}} 1\Bigg),
\end{aligned}
\end{equation}
where
\begin{equation}\label{eq:T-prime-sum}
    \msT'(d,\gamma) := \sum_{\substack{h_1, h_2, h_3, h_4 \in \Z \\ h_1h_2h_3 \neq 0,\ h_1+h_3 \neq 0 \\ |h_1|, |h_3| \le H_1,\, |h_2|, |h_4| \le H_2 \\ g := T^{a_1h_1}S \cdots T^{a_4h_4}S \\ g \equiv \gamma I \pmod{d}}} z_1(h_1) \cdots z_4(h_4) \Big(\frac{\Tr(g)^2-4}{c/d}\Big).
\end{equation}
We note that the congruence $T^{\bar{a}h_1}S T^{h_2}S T^{\bar{a}h_3}S T^{h_4}S \equiv \gamma I \pmod{d}$ can be rewritten (after moving $(S T^{h_4}S)^{-1}$ to the right-hand side) as
\begin{equation}\label{eq:cong-mod-d}
    \begin{pmatrix}
        \bar{a}h_1h_2 - 1 & \bar{a}^2 h_1h_2h_3 - \bar{a}h_1 - \bar{a}h_3 \\ 
        h_2 & \bar{a}h_2h_3 - 1
    \end{pmatrix}
    \equiv
    -\gamma
    \begin{pmatrix}
        1 & 0 \\ 
        h_4 & 1
    \end{pmatrix}
    \pmod{d}.
\end{equation}
If $h_2 = 0$, then \cref{eq:cong-mod-d} forces $h_4 \equiv 0 \pmod{d}$ (due to the bottom-left entry) and then $h_1 \equiv -h_3 \pmod{d}$ (due to the top-right entry). Judging similarly for $h_1 = 0$ or $h_3 = 0$, and recalling that $d \le \min(H_1, H_2)$, we see that the terms with $h_1h_2h_3 = 0$ contribute to the error term in \cref{eq:remove-0} at most $O(\tfrac{H_2}{d} H_1 \tfrac{H_1}{d} + \tfrac{H_1}{d} H_2 \tfrac{H_2}{d})$.

If $h_1h_2h_3 \neq 0$ but $h_1 + h_3 = 0$, then \cref{eq:cong-mod-d} forces $-2 = -2\gamma \in \Z/d\Z$ (by adding the top-left and bottom-right entries). Multiplying the top-left entries by $2$, we then obtain $2\bar{a}h_1h_2 \equiv 0 \pmod{d}$, so $d \mid 2h_1h_2$. This gives $O( H_1H_2/d)$ ways to pick the nonzero integer $h_1h_2$, each leading to $c^{o(1)}$ ways to pick $h_1, h_2$. This fixes $h_3 = -h_1$, and we can pick $h_4$ subject to $h_2 \equiv - \gamma h_4 \pmod{d}$ in $O( H_2/d)$ ways. Overall, this case contributes to \cref{eq:remove-0} at most $O(c^{o(1)} \tfrac{H_1H_2}{d} \tfrac{H_2}{d})$. 

We conclude from \cref{eq:T-trivial-bound-2}, \cref{eq:remove-0}, and the analysis above that
\[
\begin{aligned}
    \msT(d, \gamma) &\ll \min \Big(\frac{H_1^2H_2^2}{d^3},  |\msT'(d, \gamma)| + c^{o(1)}\frac{H_1H_2(H_1+H_2)}{d^2}\Big),
    \\
    &\ll 
    c^{o(1)}\frac{H_1H_2(H_1+H_2)}{d^2} + \min \Big(\frac{H_1^2H_2^2}{d^3},  |\msT'(d, \gamma)|\Big).
\end{aligned}
\]
Combining this with \cref{eq:S-to-small-d}, \cref{eq:center}, and $d \ge c_2$ gives
\begin{equation}\label{eq:S-to-T-prime}
\begin{aligned}
    \msS \ll_\eps c^{1+2\eps}c_2 \max(H_1, H_2) &+ c^{o(1)}H_1H_2(H_1+H_2) 
    \\
    &+ c_2^{1+o(1)} \sum_{\substack{d \in \Z_+ \\ c_2 \mid d \mid c,\, 2d \nmid c \\ d \le \min(H_1, H_2)}} d \sum_{\substack{\gamma \in \Z/d\Z \\ \gamma^2 = 1}} \min\Big(\frac{H_1^2H_2^2}{d^3}, |\msT'(d,\gamma)|\Big).
\end{aligned}
\end{equation}

Finally, note that when $g = T^{\bar{a}h_1}S T^{h_2}S T^{\bar{a}h_3}S T^{h_4}S$, we have $\Tr(g) = xh_4 + y$ where
\[
    x = \bar{a}^2 h_1h_2h_3 - \bar{a}(h_1+h_3),
    \qquad 
    y = 2 - \bar{a} (h_1+h_3) h_2.
\]
By the triangle inequality in \cref{eq:T-prime-sum} and the bound $z_i(h_i) \ll 1$, we thus have
\[
    \msT'(d,\gamma) \ll \sum_{\substack{x, y \pmod{c/d} \\ r \pmod{d}}} \sum_{\substack{h_1, h_2, h_3 \in \Z \\ h_1h_2h_3 \neq 0,\ h_1+h_3 \neq 0 \\ |h_1|, |h_3| \le H_1,\, |h_2| \le H_2 \\ T^{\bar{a}h_1}S T^{h_2}S T^{\bar{a}h_3}S T^{r}S \equiv \gamma I \pmod{d} \\ \bar{a}^2 h_1h_2h_3 - \bar{a}(h_1+h_3) \equiv x \pmod{c/d} \\ \bar{a} (h_1+h_3) h_2 \equiv 2-y \pmod{c/d}}} \Big\vert
    \sum_{\substack{|h_4| \le H_2 \\ h_4 \equiv r \pmod{d}}} z_4(h_4) \Big(\frac{(xh_4+y)^2-4}{c/d}\Big) \Big\vert.
\]
We then apply H\"older's inequality to the sum over $x, y, r$, with parameters $\frac{\ell}{\ell-1}$, $2\ell$, and $2\ell$, to obtain
\[
    \msT'(d,\gamma) \ll \msA(d,\gamma)^{1-\frac{1}{\ell}} \msB(d,\gamma)^{\frac{1}{2\ell}}
    \msC_\ell(d,\gamma)^{\frac{1}{2\ell}},
\]
for $\msA, \msB, \msC_\ell$ as in \cref{eq:A-sum,eq:B-sum,eq:C-sum} (we execute the sum over $x, y$ for $\msA$, and change variables $x \mapsto ax$, $y \mapsto a(2-y)$ to slightly simplify $\msB$). By combining this with \cref{eq:S-to-T-prime}, we are done.
\end{proof}

\section{Counting and character sums} \label{sec:counting-problems}


Here we bound the sums $\msA, \msB$ and $\msC_{\ell}$ from \cref{eq:A-sum,eq:B-sum,eq:C-sum} by various counting techniques as well as estimating character and exponential sums. We note that the congruence $T^{\bar{a}h_1}S T^{h_2}S T^{\bar{a}h_3}S T^{r}S \equiv \gamma I \pmod{d}$ can be rewritten (by the same computation as in \cref{eq:cong-mod-d}) as
\[
    \begin{pmatrix}
        \bar{a}h_1h_2 - 1 & \bar{a}^2 h_1h_2h_3 - \bar{a}h_1 - \bar{a}h_3 \\ 
        h_2 & \bar{a}h_2h_3 - 1
    \end{pmatrix}
    \equiv
    -\gamma
    \begin{pmatrix}
        1 & 0 \\ 
        r & 1
    \end{pmatrix}
    \pmod{d}.
\]

\subsection{Bounding \texorpdfstring{$\msA$}{A}}

\begin{proposition} \label{prop:A-sum-bound}
Let $d \in \Z_+$, $a \in \Z$ such that $(a, d) = 1$, $\gamma \in \Z/d\Z$ such that $\gamma^2 = 1$ and $d \leq \min(H_1, H_2)$. Then the sum $\msA$ given by \cref{eq:A-sum} is bounded by
\[
    \msA \ll d^{o(1)} \frac{H_1^2 H_2}{d^2}.
\]
\end{proposition}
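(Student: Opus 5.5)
The plan is to sum out $r$ first and then count the triples $(h_1,h_2,h_3)$ directly from the entries of the matrix congruence displayed at the start of \cref{sec:counting-problems}.

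\emph{Step 1: eliminating $r$.} Once $h_1,h_2,h_3$ are fixed, the bottom-left entry forces $r\equiv -\gamma h_2 \pmod d$, using $\gamma^2=1$. Hence there is at most one admissible residue $r \pmod d$, and
\[
    \msA \le \#\{(h_1,h_2,h_3) : |h_1|,|h_3|\le H_1,\ 0<|h_2|\le H_2,\ h_1h_3\neq 0,\ \text{(remaining entry congruences)}\}.
\]
The remaining conditions modulo $d$ are:
\[
    \bar a h_1h_2 \equiv 1-\gamma, \qquad \bar a h_2h_3 \equiv 1-\gamma, \qquad \bar a^2 h_1h_2h_3 \equiv \bar a(h_1+h_3).
\]

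\emph{Step 2: counting.} The first two congruences give $h_2(h_1-h_3)\equiv 0 \pmod d$. I would also use the top-right congruence: multiplying it by $h_2$ and substituting $\bar a h_1h_2 \equiv 1-\gamma$ gives $(1-\gamma)\bar a h_2h_3 \equiv \bar a h_2(h_1+h_3)$. Since $\bar a h_2 h_3 \equiv 1-\gamma$, the left side is $(1-\gamma)^2 = 2(1-\gamma)$. This yields $h_2(h_1+h_3)\equiv 2\bar{a}^{-1}\cdot$ a fixed residue, which is mostly redundant.

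The cleanest count goes as follows. Fix $h_1$, which has $\ll H_1$ choices. Then $h_2$ must satisfy $h_1h_2 \equiv a(1-\gamma) \pmod d$, so $h_2$ lies in a union of residue classes modulo $d/(h_1,d)$ and has $\ll (h_1,d)H_2/d + 1$ choices (and $H_2\ge d$). Next, given $h_1,h_2$, the congruences $h_2h_3\equiv a(1-\gamma)$ and $h_3(\bar a h_1h_2 - 1)\equiv h_1 \pmod d$ constrain $h_3$. Since $\bar a h_1h_2-1\equiv -\gamma$ is a unit, the second congruence pins $h_3 \equiv -\gamma h_1 \pmod d$ uniquely. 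That leaves $\ll H_1/d + 1 \ll H_1/d$ choices for $h_3$, using $d\le H_1$.

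Combining, the total is
\[
    \sum_{|h_1|\le H_1,\, h_1\neq 0} \frac{(h_1,d)H_2}{d}\cdot\frac{H_1}{d} \ll \frac{H_1H_2}{d^2}\sum_{0<|h_1|\le H_1}(h_1,d) \ll d^{o(1)}\frac{H_1^2H_2}{d^2},
\]
where the last step uses $\sum_{h\le H}(h,d)\le \sum_{e\mid d} e\cdot H/e \ll \tau(d) H$.

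\emph{Main obstacle.} The only delicate point is the count of $h_2$, since $h_1$ need not be coprime to $d$. This is exactly what the gcd-sum bound absorbs. Everything else is a direct reading of the matrix entries together with the hypothesis $d\le\min(H_1,H_2)$, which lets us absorb the ``$+1$'' terms.
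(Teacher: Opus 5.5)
Your argument is correct and is essentially the paper's proof. Both arguments read off $r\equiv-\gamma h_2$, use the unit $\bar a h_1h_2-1\equiv-\gamma$ to force $h_3\equiv-\gamma h_1 \pmod d$, and lose only a divisor-type factor for the variable not coprime to $d$. The only difference is bookkeeping: the paper fixes $\delta=(d,h_2)$ and determines $h_1$ modulo $d/\delta$, whereas you fix $h_1$, place $h_2$ in a single residue class modulo $d/(h_1,d)$, and absorb the gcd via $\sum_{|h|\le H_1}(h,d)\ll\tau(d)H_1$.
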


\begin{proof}
After summing over $r$, we see that $\msA$ is the number of triples $(h_1, h_2, h_3) \in [-H_1, H_1] \times [-H_2, H_2] \times [-H_1, H_1]$ such that $h_1 + h_3 \not= 0 \not = h_1h_2h_3$ and
$$\bar{a} h_1h_2 - 1 \equiv  -\gamma, \quad \bar{a}h_1h_2h_3 - h_1 - h_3 \equiv 0, \quad \bar{a} h_2h_3 - 1 \equiv - \gamma$$
modulo $d$. Let $\delta = (d, h_2)$. Then the first congruence implies that $\gamma \equiv 1$ (mod $\delta$), and so $h_1 \equiv \frac{1 - \gamma}{\delta} a \overline{\frac{h_2}{\delta}}$ (mod $\frac{d}{\delta}$) is determined modulo $d/\delta$. Substituting the third congruence into the second implies $-h_1 \gamma - h_3 \equiv 0$  (mod $d$). Since $h_1h_2h_3 \not= 0$, we get, for a given $\delta$, at most $O(H_2/\delta)$ values for $h_2$ and $O(H_1\delta/d)$ values for $h_1$ and then $O(H_1/d)$ values for $h_3$, and the claim follows, observing that the number of possible $\delta$ is $d^{o(1)}$.
\end{proof}

\subsection{Bounding \texorpdfstring{$\msB$}{B}}

\begin{proposition} \label{prop:B-sum-bound}
Let $c, d \in \Z_+$ such that $d \mid c$, $(d, c/d) = 1$, and $c/d$ is square-free. Let $a \in \Z$ such that $(a, c) = 1$, $\gamma \in \Z/d\Z$ such that $\gamma^2 = 1$, and $H_1, H_2$ such that $d \leq \min(H_1, H_2)$. Then the sum $\msB$ given by \cref{eq:B-sum} is bounded by
\[
    \msB \ll (H_1H_2c)^{o(1)} \frac{H_1^2H_2}{d^2} \Big(1 +   \frac{H_1H_2}{c}\Big) \Big(1 + \frac{H_1d}{c}\Big).
\]
\end{proposition}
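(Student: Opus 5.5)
Expanding the square in \cref{eq:B-sum}, $\msB$ counts pairs of triples $(h_1,h_2,h_3)$, $(h_1',h_2',h_3')$, both satisfying the constraints of $\msA$ (with the same $r$), such that
\[
    \bar a h_1h_2h_3 - h_1 - h_3 \equiv \bar a h_1'h_2'h_3' - h_1' - h_3', \qquad (h_1+h_3)h_2 \equiv (h_1'+h_3')h_2' \pmod{c/d}.
\]
The plan is to fix the first triple, which by \cref{prop:A-sum-bound} (summing over $r$) ranges over $d^{o(1)} H_1^2H_2/d^2$ possibilities. For each such triple, we then show that the number of second triples is $\ll c^{o(1)}(1 + H_1H_2/c)(1+H_1 d/c)$. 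The mod-$d$ conditions will also be exploited for the second triple: given the first triple, $r$ is fixed, and $h_2' \equiv -\gamma r \pmod d$ is determined modulo $d$. The congruence for $h_1'$ modulo $d/(d,h_2')$ and the relation $h_3' \equiv -\gamma h_1' \pmod d$ then fix $h_1', h_3'$ modulo $d$ up to the divisor-type multiplicity handled as in \cref{prop:A-sum-bound}.

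Write $u' := h_1'+h_3'$ and $v' := h_1'h_3'$, so that the two congruences modulo $c/d$ read $u'h_2' \equiv Y$ and $\bar a v' h_2' - u' \equiv X$, with $X, Y$ determined by the first triple. The first of these is the key one. The product $m := u'h_2'$ is a nonzero integer with $|m| \ll H_1H_2$, lying in a fixed class modulo $c/d$. It is also constrained modulo $d$: since $h_2'$ and $u'$ are both determined modulo $d$ (as $u' \equiv (1-\gamma)h_1' \pmod d$), $m$ is fixed modulo $d$ as well. By CRT, and since $(d,c/d)=1$, $m$ then lies in a single class modulo $c$, which gives $\ll 1 + H_1H_2/c$ choices of $m$. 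By the divisor bound, each $m$ yields $c^{o(1)}$ pairs $(u', h_2')$.

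With $u'$ and $h_2'$ now fixed, the second congruence determines $v' h_2'$ modulo $c/d$, and hence $v'$ modulo $(c/d)/(h_2',c/d)$. If we instead parametrize by $h_1'$ (so that $h_3' = u' - h_1'$), this becomes a quadratic congruence $h_1'(u'-h_1') \equiv \ast$ modulo the square-free modulus $(c/d)/(h_2',c/d)$, which has $c^{o(1)}$ roots. Combining this with the mod-$d$ information on $h_1'$, the count of admissible $h_1' \in [-H_1,H_1]$ is $\ll c^{o(1)}(1 + H_1 d\,(h_2',c/d)/c)$.

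The main obstacle is the factor $(h_2', c/d)$: it could be large, which would break the clean bound $1 + H_1d/c$. I expect to deal with it by splitting according to $e := (h_2', c/d)$. Requiring $e \mid h_2'$ saves a factor of $e$ in the count of $m$ (or of $h_2'$), and this offsets the loss of $e$ in the $h_1'$ count. Some care is needed when $e$ is so large that the savings stop being effective, that is, when the "$1+$" terms dominate. In that regime I would fall back on bounding trivially via the congruence on $m$ restricted to multiples of $e$, using that $h_2' \ne 0$ and $|h_2'|\le H_2$, so that $e \le H_2$. Since only $c^{o(1)}$ divisors $e$ occur, summing over $e$ and multiplying by the count of first triples gives the stated bound.
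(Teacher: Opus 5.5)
There is a genuine gap: the step meant to absorb the factor $e=(h_2',c/d)$ does not work.

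\textbf{Why the proposed saving fails.} Because $e\mid c/d$ and $m=u'h_2'$ is already confined to one residue class modulo $c$, the condition $e\mid h_2'$ only forces $e\mid Y$. That is a condition on the first triple. It does not thin out $m$ inside its class, so the count of $m$ stays at $1+H_1H_2/c$ with no saving.

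Counting $h_2'$ directly as a multiple of $e$ does save $e$. But then $u'h_2'\equiv Y$ fixes $u'$ only modulo $(c/d)/e$ (and modulo $d$), so the factor $e$ is lost again when counting $u'$. This could be avoided only by noticing that for $p\mid e$ the second congruence degenerates to $u'\equiv -X \pmod p$, and your proposal does not use this.

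Your fallback via $e\le H_2$ gives at best $(1+H_1H_2/c)(1+H_1H_2d/c)$ per first triple. This exceeds the target $(1+H_1H_2/c)(1+H_1d/c)$: for instance $d=1$, $H_1=c^{0.6}$, $H_2=c^{0.5}$ loses $c^{0.1}$. Since you then multiply by the full first-triple count $H_1^2H_2/d^2$ from \cref{prop:A-sum-bound}, the saving has nowhere else to come from.

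\textbf{The missing idea in the paper's proof.} Do not fix the first triple with its full count. Split according to both $g=(h_2,c/d)$ and $g'=(h_2',c/d)$. Since $\msB$ is symmetric under swapping the two triples, you may assume $g\ge g'$.

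Then $g\mid h_2$ saves a factor $g$ in the first-triple count, giving $d^{o(1)}H_1^2H_2/(d^2g)$. The quadratic congruence for $h_1'$ modulo the square-free $c/d$ has $O(c^{o(1)}g')$ roots, hence $c^{o(1)}g'(1+H_1d/c)$ admissible $h_1'$. The ratio $g'/g\le 1$ closes the argument.

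The rest of your plan matches the paper: fixing $u'h_2'$ modulo $c$ by CRT (mod $d$ from the level congruences, mod $c/d$ from the first congruence), then the divisor bound, then the quadratic congruence.

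A per-first-triple argument like yours can also be repaired, but it needs two further ingredients. First, use $u'\equiv -X \pmod e$ from the second congruence. Second, genuinely use $h_1' \bmod d/(h_2',d)$, so that $h_1'$ is fixed modulo $c/(h_2',c)$ with $(h_2',c)\le H_2$. Then take the better of the two ways of counting $(u',h_2')$.
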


\begin{proof}
As before, we need to count 6-tuples $(h_1, h_2, h_3, h_1', h_2', h_3')$ with $h_1h_2h_3h_1'h_2'h_3' \not = 0 \not= (h_1 + h_3)(h_1'+h_3')$ and $h_1, h_3, h_1', h_3' \ll H_1$, $h_2, h_2' \ll H_2$ such that
\begin{equation}\label{cong1}
(h_1 + h_3)h_2 \equiv (h_1' + h_3')h_2', \quad \bar{a}h_1h_2h_3 - h_1 - h_3 \equiv \bar{a} h_1'h_2'h_3' - h_1' - h_3' \, (\text{mod } c/d)
\end{equation}
and
\begin{equation}\label{cong2}
   \bar{a} h_1h_2 - 1 \equiv  -\gamma, \quad \bar{a}h_1h_2h_3 - h_1 - h_3 \equiv 0, \quad \bar{a} h_2h_3 - 1 \equiv - \gamma \, (\text{mod } d) ,
\end{equation}
\begin{equation}\label{cong3}
    \bar{a} h'_1h'_2 - 1 \equiv  -\gamma, \quad \bar{a}h'_1h'_2h'_3 - h'_1 - h'_3 \equiv 0, \quad \bar{a} h'_2h'_3 - 1 \equiv - \gamma, \quad h_2   \equiv h_2' \, (\text{mod } d).
\end{equation}
We write $g = (h_2, c/d)$, $g' = (h_2', c/d)$, and consider the subcount for a fixed pair $(g, g')$. By symmetry we can assume $g \geq g'$. 

We start by choosing nonzero $h_1, h_2, h_3$ in $O(d^{o(1)} H_1^2H_2/d^2g)$ ways using \eqref{cong2} by the same argument as in the previous proof together with the fact that in addition $g \mid h_2 \, (\not= 0)$.

The congruences in \eqref{cong3} imply that 
$(h_1' + h_3')h_2' \equiv 
2a(1 - \gamma) \, (\text{mod } d).$ 
Hence the nonzero number $(h_1' + h_3')h_2'$ is determined modulo $d$  and modulo $c/d$ by \eqref{cong1} and hence modulo $c$. Thus we have $O(1 + H_1H_2/c)$ choices for it, and so by a divisor argument $O((H_1H_2)^{o(1)} (1 + H_1H_2/c))$ choices for the pair $h' := (h_1'+h_3', h_2')$. 

Finally, the second congruence in \eqref{cong1} becomes
$$\bar{a} h_1' (h' - h_1') h_2' - h' \equiv \bar{a}h_1h_2h_3 - h_1 -h_3\, (\text{mod } c/d),$$
which is a quadratic congruence in $h_1'$ modulo the squarefree number $c/d$. It has at most $O(c^{o(1)} g')$ solutions.  Hence the total count for a fixed pair $(g, g')$ with $g \geq g'$ is 
$$(H_1H_2c)^{o(1)} \frac{H_1^2H_2}{d^2 g} \Big(1 + \frac{H_1H_2}{c}\Big) g'\Big(1 + \frac{H_1d }{c}\Big),$$
and the claim follows.
\end{proof}

The following variation using a Fourier-analytic argument performs better in certain ranges. 

\begin{proposition} \label{prop:B-sum-bound-alt}
Under the same assumption as in the previous proposition we have 
\[
    \msB \ll (H_1H_2c)^{o(1)} \frac{H_1H_2}{d} \Bigg( \Big(   \frac{c^{1/2}}{d^{1/2}  }  + \frac{H_1^2}{c } \Big)\Big(1 + \frac{H_1H_2}{c}\Big) + \frac{H_1^2 H_2^{1/2}}{cd^{1/4}}  \Bigg).
\]
\end{proposition}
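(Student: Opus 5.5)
We will detect the two congruences modulo $q := c/d$ in \cref{eq:B-sum} by additive characters instead of counting solutions directly. Write $\msB = \sum_{r}\sum_{x,y \pmod q} |N_r(x,y)|^2$, where $N_r(x,y)$ is the inner count. Expanding the square and detecting $\bar a h_1h_2h_3 - h_1 - h_3 \equiv x$ and $(h_1+h_3)h_2 \equiv y$ by orthogonality gives
\[
    \msB = \frac{1}{q^2}\sum_{r \pmod d}\sum_{u,v \pmod q} \Big|\sum_{(h_1,h_2,h_3)} e\Big(\frac{u(\bar a h_1h_2h_3 - h_1-h_3) + v(h_1+h_3)h_2}{q}\Big)\Big|^2,
\]
with the triples restricted as in \cref{eq:B-sum}. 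The frequency $(u,v) = (0,0)$ contributes $\msA^2/(q^2 d) \cdot d$ essentially, i.e.\ $\ll q^{-2}\cdot$(number of triples)$^2$ summed over $r$. By \cref{prop:A-sum-bound}, the triples number $\ll H_1^2H_2/d^2$, so this is $\ll (H_1^2H_2/d^2)^2 d^2/c^2 = H_1^4H_2^2/(c^2d^2)$, which is dominated by the stated terms. This term is visible inside $\frac{H_1H_2}{d}\cdot\frac{H_1^2}{c}\cdot\frac{H_1H_2}{c}$ up to the factor $d$.

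For the nonzero frequencies, we fix $h_2$ and $s := h_1+h_3$. The phase then becomes $e\big((\bar a u h_2 h_1(s-h_1) + (vh_2 - u)s)/q\big)$, which is a quadratic in $h_1$. The mod-$d$ constraints from the congruence $T^{\bar a h_1}ST^{h_2}ST^{\bar a h_3}ST^rS \equiv \gamma I$ fix $h_1$ modulo $d/(d,h_2)$ and $h_3 \equiv -\gamma h_1 \pmod d$, as in the proof of \cref{prop:A-sum-bound}. Parametrising by $h_2$, the residue class of $h_1$, and the free parts, we get sums over arithmetic progressions of length $\approx H_1/d$.

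\begin{itemize}
    \item[(a)] For the \emph{diagonal-type} contribution, we apply Cauchy--Schwarz and open the square. This reduces to counting pairs of triples with $(h_1+h_3)h_2 \equiv (h_1'+h_3')h_2'$ and the other congruence mod $q$. That is the count $(H_1H_2/d)\cdot(1+H_1H_2/c)$ from the proof of \cref{prop:B-sum-bound}, now multiplied by the number of solutions of the quadratic congruence. Completing the sum over $h_1'$ modulo $q$ (Poisson) instead of counting solutions, the completed sums are quadratic Gauss sums modulo the square-free $q$, of size $\sqrt q$. This should produce the term $\frac{H_1H_2}{d}\big(\frac{c^{1/2}}{d^{1/2}} + \frac{H_1^2}{c}\big)(1+\frac{H_1H_2}{c})$: the $\sqrt{c/d}$ comes from the nonzero dual frequencies, and the $H_1^2/c$ from the zero frequency, with roughly $H_1^2/d$ terms over $q$.
    \item[(b)] The terms where the Gauss-sum structure degenerates, namely $q$ sharing factors with $uh_2$, are handled via $(h_2,q)$ as in the previous proof, costing only $c^{o(1)}$.
    \item[(c)] The remaining term $\frac{H_1^3H_2^{3/2}}{cd^{5/4}}$ should arise when, after Poisson in $h_1$ and $h_3$, the resulting complete sum in the remaining variable is a Kloosterman sum modulo $q$. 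We bound it by Weil, \cref{eq:Weil-bound}, and sum its length $H_2/d$ via Cauchy--Schwarz. This explains the exponents $H_2^{1/2}$ and $d^{-1/4}$, which come from a square root of a $\sqrt{q}$-type saving.
\end{itemize}

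The main obstacle is step (c): organising the double completion so that the dual sum is genuinely a Kloosterman sum. This requires handling the coupling between the mod-$d$ progression constraints and the mod-$q$ phases via CRT, with $(d,q)=1$ used crucially. It also requires controlling non-generic cases where the quadratic coefficient $\bar a u h_2$ is not invertible mod $q$. I would first try completing only in $h_1$ (with $s,h_2$ fixed) and then in $s$, checking that the resulting phase is $e(\alpha \bar{h_2} + \beta h_2)/q$-shaped.
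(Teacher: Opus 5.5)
There is a genuine gap. The heuristic shape in (a) is right: the $(h_2,s)$-count, times the $(h_2',s')$-count, times $\big(H_1^2/c+\sqrt{c/d}\big)$. But the proposal never supplies the mechanism that produces $\sqrt{c/d}$, and it misidentifies where the last term comes from.

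\textbf{The missing two-dimensional count.} The Fourier expansion in $(u,v)$ buys nothing: once you open the square of the nonzero frequencies you are, by Parseval, back to counting pairs of triples. So everything rests on (a). There, completing in $h_1'$ alone cannot help. With $h_1$ fixed, the quadratic congruence already has $O(c^{o(1)}(1+H_1d/c))$ solutions $h_1'$, and that is exactly \cref{prop:B-sum-bound}.

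The missing idea is to fix only $(h,s)=(h_2,h_1+h_3)$ and $(h',s')$, and to count the pair $(t,t')=(h_1-h_3,\,h_1'-h_3')$ \emph{jointly}. Write $t=d'\tau$, $t'=d'\tau'$ with $d'=d/(h,d)$, and let $f$ be the odd part of $c/(dg)$. Using $hs\equiv h's'$, the second congruence in \eqref{cong1} becomes a binary quadratic congruence $\tilde h\tau^2-\tilde h'\tau'^2\equiv Z \pmod f$, with $Z$ proportional to $(s-s')(hs-4a)/g$.

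Detect this congruence with an additive character in $b$ and apply Poisson in both $\tau$ and $\tau'$. This gives two Gauss sums whose product, summed over $b$, is a Kloosterman sum $S\big(-Z,-\bar4(\overline{\tilde h}z^2-\overline{\tilde h'}z'^2);p\big)$. Weil's bound for this Kloosterman sum is what yields the count $\ll L^2/f+f^{1/2}$ with $L=H_1/d'$, and hence the $c^{1/2}/d^{1/2}$ term. If, as you suggest, you only use that the Gauss sums have size $\sqrt q$ and sum trivially over $b$, you get $L^2/f+f$, which gives nothing.

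\textbf{The degenerate primes.} Your step (b) is incorrect. The degeneracy coming from $(h_2,c/d)$ is indeed cheap: reduce to $g=g'$ by Cauchy--Schwarz and divide by $g$. The essential degeneracy is at primes $p\mid f$ with $p\mid Z$, i.e.\ $p\mid hs-4a$ or $p\mid s-s'$. There $S(-Z,w;p)$ becomes a Ramanujan sum, of size $p-1$ whenever $p\mid w$, so Weil only gives $p^{3/2}(p,z^2\tilde h'-z'^2\tilde h)^{1/2}$.

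The resulting loss of $(g_1g_2g_3)^{1/2}$ is not $c^{o(1)}$. The paper compensates for it in two ways. It imposes the corresponding congruence conditions modulo $g_1,g_2,g_3$ when choosing $(h,s)$ and $(h',s')$. It also takes a geometric mean of the two alternative counts for $(h',s')$, whose factor $g_3^{-1/2}$ cancels the $g_3^{1/2}$ loss.

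The term $\frac{H_1H_2}{d}\cdot\frac{H_1^2H_2^{1/2}}{cd^{1/4}}$ comes precisely from this bookkeeping. It does not come from a Kloosterman sum in $h_2$ summed by Cauchy--Schwarz, as in your (c); that step matches exponents rather than deriving them.
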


\begin{proof}
It will be convenient to work with the variables
$$s := h_1 + h_3, \quad t := h_1 - h_3, \quad h := h_2$$
and similarly $s' := h_1' + h_3'$, $t' := h_1' - h_3'$, $h' := h_2'$. 

We start similarly as before and write $g = (h, c/d)$, $g' = (h', c/d)$. If $\msB(g, g')$ denotes the corresponding subcount, then by Cauchy-Schwarz we have $ \msB(g, g')^2 \leq \msB(g, g) \msB(g', g')$, so without loss of generality we can assume $g = g'$.  Note that under this assumption $g \mid s-s'$ from the middle congruence in \eqref{cong1}.

Let $G_1, G_2, G_3$ be three disjoint sets  of primes dividing  $c/(dg)$, and consider the subcount of $6$-tuples where
\begin{displaymath}
\begin{split}
& G_1  = \{p \mid c/(dg) : p \mid (hs - 4a, h's'-4a)\}, \\
&G_2  = \{p \mid c/(dg) : p \mid (s, s'), p \nmid hh', p \not\in G_1\}, \\
& G_3  = \{p \mid c/(dg) : p \nmid ss'hh', p \mid (s - s', h- h'), p \not\in G_1 \cup G_2\},
\end{split}
\end{displaymath}
and in addition $\delta = (h, d) = (h', d)$ (by the last congruence in \eqref{cong3}). There are $c^{o(1)}$ such subcounts. 

We write
$$g_1 = \prod_{p\in G_1} p, \quad 
g_2 = \prod_{p\in G_2} p, \quad g_3 = \prod_{p\in G_3} p.$$

It follows from \eqref{cong2} that $\bar{a}hs\equiv 2(1-\gamma) $ (mod $d$), and also $s \equiv 0$ (mod $\delta$), so $s$ is determined modulo $[d/\delta, \delta]$. The same holds for $s'$. After these preparations we start with the counting procedure. 

As a first step we choose the product $hs\not=0$, which is divisible by $g$ and $g_2$ and determined modulo $g_1$ (these three moduli are pairwise coprime). Moreover, $hs$ is divisible by $\delta^2$ and determined modulo   $d$. The product $hs$ determines the pair $(h, s)$ up to a divisor function, so that the number of choices of $(h, s)$ is at most
\begin{equation}\label{pair1}
    O\Big((H_1H_2)^{o(1)} \Big(1 + \frac{H_1H_2}{gg_1g_2 [d, \delta^2]}\Big)\Big).
\end{equation}
Before we continue, we remember that
\begin{equation}\label{bound}
    g \delta \ll H_2
\end{equation}
otherwise there is no solution at all. 

In the second step we choose 
the product $h's' \not= 0$ satisfying a congruence modulo $c/d$ and modulo $d$ in $O(1+H_1H_2/c)$, which again determines the pair $(h', s')$ up to a divisor function. Alternatively, we can choose $h'$, $s'$ separately where $h'$ is divisible by $g$ and satisfies a congruence modulo $g_3$ and modulo $d$ (by \eqref{cong3}), while $s'$ is divisible by $g_2$ and satisfies a congruence modulo $g_3$, $g_1$, $g$ and $[d/\delta, \delta]$ (once $h', s'$ are chosen). Hence in total the number of choices for this pair is
\begin{equation}\label{pair2}
O\Bigg((H_1H_2)^{o(1)}\min\Big( 1 + \frac{H_1H_2}{c}, \Big(1 + \frac{H_2}{gg_3d}\Big)\Big(1 + \frac{H_1}{gg_1g_2g_3[d/\delta, \delta]}\Big)\Big)\Bigg).
\end{equation}

In the final step we choose the pair $(t, t')$, both entries of which are divisible by $d' := d/\delta$, so let us write $t = d'\tau$, $t' = d'\tau'$.  Let $f$ be the odd part of $c/(dg).$  We will only use the second congruence in \eqref{cong1}, 
which after dividing out by $g$ and if necessary dropping the information modulo 2 becomes 
$$\tilde{h}\tau^2 - \tilde{h}'\tau'^2 \equiv Z \, (\text{mod } f), \quad Z =  \overline{d'}^2\frac{(s- s')}{g}(hs - 4a), \quad \tilde{h} = \frac{h}{g}, \quad \tilde{h}' = \frac{\tilde{h}'}{g},$$
where we used that $hs - 4a \equiv h's'- 4a$ (mod $c/d$). 

Before we proceed, we observe that   $(\tilde{h}\tilde{h}', f) = 1$ by construction. Moreover, for a prime $p \mid f$ we have $p\mid Z$ only if $p \mid g_1g_2g_3$. 

Let $W$ be a smooth non-negative function with support in $[-3, 3]$ that is one on $[-2, 2]$. Then our count for the pair $(t, t')$ is majorized by 
$$\sum_{\tau, \tau'} W\Big(\frac{\tau}{H_1/d'}\Big)W\Big(\frac{\tau'}{H_1/d'}\Big) \frac{1}{f} \sum_{b \, (\text{mod } f)} e\Big( \frac{b(\tilde{h}  \tau^2 - \tilde{h} '\tau'^2 - Z)}{f}\Big). $$
By Poisson summation, this equals 
\begin{equation}\label{poisson}
    \frac{H_1^2}{d'^2f^3} \sum_{z, z'} \hat{W}\Big(\frac{z}{d'f/H_1}\Big) \hat{W}\Big(\frac{z'}{d'f/H_1}\Big)\sum_{b, \eta, \eta' \, (\text{mod } f)} e\Big( \frac{b(\tilde{h}\eta ^2 - \tilde{h}' \eta'^2 - Z) - \eta z - \eta' z'}{f}\Big)
\end{equation}
where $\hat{W}(x) \ll_A (1 + |x|)^{-A}$ for any $A > 0$. Since $f$ is odd and squarefree, it suffices to estimate  the triple character sum for $f$ an odd  prime $p$. 

We first deal with the portion where $b \equiv 0$ (mod $p$), which equals $p^2 \one_{p \mid (z, z')}$. We continue with the remaining portion where $p \nmid b$.  We start with the $\eta$-sum, which is a standard quadratic Gau{\ss}  sum, which equals 
$$p^{1/2} \epsilon_p \Big(\frac{b\tilde{h}}{p}\Big) e\Big(\frac{- \overline{4b\tilde{h}} z^2}{p}\Big)$$
with $\epsilon_p = 1, i$ depending on whether $p \equiv 1$ (mod 4) or $p \equiv 3 $ (mod 4).  The same evaluation holds for the $\eta'$-sum, so that the complete $b, \eta, \eta'$-sum to an odd  prime modulus $p$ equals
\begin{displaymath}
\begin{split}
& p^2 \one_{p \mid (z, z')} + \sum_{0 \not\equiv b\, (\text{mod } p)} e\Big(\frac{-bZ}{p} \Big)  p \epsilon_p^2 \Big(\frac{-\tilde{h}\tilde{h}'}{p}\Big) e\Big(\frac{- \overline{4b} (\overline{\tilde{h}} z^2 - \overline{\tilde{h}'} z'^2) }{p}\Big) \\
& = p^2 \one_{p \mid (z, z')} +   
p \Big(\frac{\tilde{h}\tilde{h}'}{p}\Big) S( - Z, - \bar{4} (\overline{\tilde{h}} z^2 - \overline{\tilde{h}'} z'^2), p)\\
& \ll p^{3/2} [(p, z, z'), (p, g_1g_2g_3, z^2\tilde{h}' - z'^2 \tilde{h})]^{1/2}
\end{split}
\end{displaymath}
by Weil's bound for Kloosterman sums. We return to \eqref{poisson} and bound this expression by
\begin{displaymath}
\begin{split}
& f^{o(1)} \frac{H^2_1}{d'^2f^{3/2}}\sum_{z, z'}\Big| \hat{W}\Big(\frac{z}{d'f/H_1}\Big) \hat{W}\Big(\frac{z'}{d'f/H_1}\Big) \Big|[(f, z, z'), (g_1g_2g_3, z^2\tilde{h}' - z'^2 \tilde{h})]^{1/2}\\
&  \leq f^{o(1)} \frac{H^2_1}{d'^2f^{3/2}} \sum_{k_1 \mid f} \sum_{k_2 \mid g_1g_2g_3} \sum_{  z^2 \tilde{h}' \equiv   z'^2 \tilde{h} \, (\text{mod } \frac{k_2}{(k_2, k_1)})} \Big| \hat{W}\Big(\frac{z}{d'f  /k_1H_1}\Big) \hat{W}\Big(\frac{z'}{d'f /k_1H_1}\Big)  \Big| [k_1, k_2]^{1/2}\\
& \ll  f^{o(1)} \frac{H^2_1}{d'^2f^{3/2}} \sum_{k_1 \mid f} \sum_{k_2 \mid g_1g_2g_3} [k_1, k_2]^{1/2}\Big(1 +  \frac{fd'}{k_1H_1}\Big) \Big(1 +  \frac{fd'}{[k_1, k_2]H_1}\Big) \\
& \ll f^{o(1)} \frac{H^2_1}{d'^2f^{3/2}}\Big(f^{1/2} + \frac{ fd'(g_1g_2g_3)^{1/2}}{H_1} + \frac{d'^2f^2}{H_1^2}\Big)  =  f^{o(1)}\Big( \frac{H_1^2}{d'^2f} + \frac{H_1(g_1g_2g_3)^{1/2}}{d'f^{1/2}} + f^{1/2}\Big). 
\end{split}
\end{displaymath}
Combining this with \eqref{pair1} and \eqref{pair2}, we obtain the total count
\begin{displaymath}
\begin{split}
&  (cH_1H_2)^{o(1)}    \Big(1 + \frac{H_1H_2}{gg_1g_2[d, \delta^2]}\Big) \Big( \frac{H_1^2\delta^2}{d^2(c/dg)} + \frac{H_1\delta(g_1g_2g_3)^{1/2}}{d(c/dg)^{1/2}} + \frac{c^{1/2}}{(dg)^{1/2}}\Big) \\
& \quad\quad\quad \times \min\Big[ 1 + \frac{H_1H_2}{c}, \Big(1 + \frac{H_2}{gg_3d}\Big)\Big(1 + \frac{H_1}{gg_1g_2g_3[d/\delta, \delta]}\Big)\Big]. 
\end{split}
\end{displaymath}
Multiplying out, we obtain
\begin{displaymath}
\begin{split}
&  \Big(\frac{H_1^2g\delta^2 }{ cd} + \frac{H_1\delta(gg_1g_2g_3)^{1/2}}{(cd)^{1/2}} + \frac{c^{1/2}}{(dg)^{1/2}}+ \frac{H_1^3H_2\delta^2 }{d cg_1g_2[d, \delta^2]} + \frac{H_1^2H_2\delta g_3^{1/2}}{(gg_1g_2cd)^{1/2}[d, \delta^2]} + \frac{H_1H_2c^{1/2}}{d^{1/2}g^{3/2}g_1g_2[d, \delta^2]}\Big)\\ 
& \quad\quad\quad \times \min\Big[ 1 + \frac{H_1H_2}{c},  1 + \frac{H_2}{g_3d} + \frac{H_1}{g_3[d/\delta, \delta]} + \frac{H_1H_2}{g_3^2 d[d/\delta, \delta]} \Big]
(cH_1H_2)^{o(1)}.
\end{split}
\end{displaymath}
Using  $gg_1g_2g_3 \leq c/d$, $\delta \leq d \leq \min(H_1, H_2, c)$ 
and \eqref{bound}, we can bound all but the fifth term in the first line of the previous display by
$$\frac{c^{1/2}H_1H_2}{d^{3/2}} + \frac{H_1^3H_2}{cd}.$$
Moreover, we can estimate the minimum in the second line by
$$1 + \frac{H_2H_1^{1/2}}{(cdg_3)^{1/2}} + \frac{H_2^{1/2}H_1}{(g_3 c [d/\delta, \delta])^{1/2}} + \frac{H_1H_2}{c^{3/4} g_3^{1/2} (d[d/\delta, \delta])^{1/4}}.$$
Thus we estimate the term in question  by 
\begin{displaymath}
\begin{split}
&  (cH_1H_2)^{o(1)}\Bigg( \Big(   \frac{c^{1/2}H_1H_2}{d^{3/2}} + \frac{H_1^3H_2}{cd}\Big)\Big(1 + \frac{H_1H_2}{c}\Big) \\
& \quad\quad\quad + \frac{  H_1^2H_2\delta g_3^{1/2}}{(cd )^{1/2}[d, \delta^2] } \Big( 1 + \frac{H_2H_1^{1/2}}{(cdg_3)^{1/2}} + \frac{H_2^{1/2}H_1}{(cg_3  [d/\delta, \delta])^{1/2}} + \frac{H_1H_2}{c^{3/4} g_3^{1/2} (d[d/\delta, \delta])^{1/4}} \Big)\Bigg)\\ 
& \ll  (H_1H_2)^{o(1)}\Bigg( \Big(   \frac{c^{1/2} H_1H_2}{d^{3/2}  }  + \frac{H_1^3H_2}{cd } \Big)\Big(1 + \frac{H_1H_2}{c}\Big) + \frac{H_1^2 H_2}{c^{1/2}d} + \frac{H_1^{5/2} H_2^{2}}{cd^{3/2}} + \frac{H_1^{3} H_2^{3/2]}}{c d^{5/4}} 
+ \frac{H_1^3H_2^2 }{c^{5/4} d^{11/8}} 
\Bigg),
\end{split}
\end{displaymath}
using that $$\frac{\delta}{d^{1/2} [d, \delta^2][d/\delta, \delta]^{1/2}} \leq \frac{1}{d^{5/4}}, \quad \frac{\delta}{d^{1/2} [d, \delta^2]d^{1/4}[d/\delta, \delta]^{1/4}}   \leq \frac{1}{d^{11/8}}.  $$
Since we have
\begin{displaymath}
    \begin{split}
&\Big(\frac{c^{1/2} H_1H_2}{d^{3/2}} \cdot \frac{H_1^3H_2}{cd }  \Big)^{1/2}  \Big(1 + \frac{H_1H_2}{c} \Big) = \frac{H_1^2H_2}{c^{1/4}d^{5/4}}  + \frac{H_1^3H_2^2}{c^{5/4}d^{5/4}}  \geq \frac{H_1^2H_2}{c^{1/2}d} + \frac{H_1^3H_2^2}{c^{5/4}d^{11/8}}, \\&\Big(\frac{c^{1/2} H_1H_2}{d^{3/2}} \Big)^{3/4} \Big( \frac{H_1^3H_2}{cd }  \Big)^{1/4} \frac{H_1H_2}{c} = \frac{H_1^{5/2}H_2^2}{c^{7/8}d^{11/8}} \geq \frac{H_1^{5/2}H_2^2}{cd^{3/2}} ,\\
\end{split}
\end{displaymath}
we can drop three of the last four terms. This completes the proof. 
\end{proof}

\subsection{Bounding \texorpdfstring{$\msC_{2}$}{C2}}

We start with the Weil bound for quadratic character sums. 

\begin{lemma} \label{lem:char-sum-bound}
Let $p$ be an odd prime and $n \in \Z_+$. Let $f \in \F_p[X]$ be a degree-$d$ polynomial which is not a square. Then one has
\[
    \Big\vert \sum_{x \pmod{p}} \Big(\frac{f(x)}{p}\Big) \Big\vert
    \le 
    (d-1)
    \sqrt{p}.
\]
\end{lemma}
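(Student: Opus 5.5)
This is the classical Weil bound for multiplicative character sums, and I would deduce it from the Riemann hypothesis for curves (Weil). It is not derived from anything earlier in the paper. The first step is to normalize $f$. Write $f = g^2 h$ over $\F_p$ with $h$ squarefree. Then $h$ is non-constant, since $f$ is not a square up to a scalar. (If the intended hypothesis is literally ``not a square'', the case $f = \lambda g^2$ with $\lambda$ a nonsquare constant must be handled separately. There the sum equals $-\#\{x : g(x)\ne 0\}$, which is far from $O(\sqrt p)$; so the statement should be read with ``not a constant multiple of a square''. This is how it is applied to $\msC_2$, where the non-square condition will be checked up to scalars.)

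We have $\big(\frac{f(x)}{p}\big) = \big(\frac{h(x)}{p}\big)$ except at the at most $\deg g$ roots of $g$. Each such root changes the sum by at most $1$.

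The second step is the core. Let $m = \deg h \ge 1$ and consider the hyperelliptic curve $C: y^2 = h(x)$. Counting affine points gives $\#C(\F_p)_{\mathrm{aff}} = p + \sum_x \big(\frac{h(x)}{p}\big)$. The smooth projective model has genus $\lfloor (m-1)/2\rfloor$ and has $1$ or $2$ (or $0$) points at infinity depending on the parity of $m$ and on the leading coefficient. Weil's theorem $|\#C(\F_p) - (p+1)| \le 2\,\mathrm{genus}\,\sqrt p$ then gives
\[
\Big|\sum_x \Big(\frac{h(x)}{p}\Big)\Big| \le (m-1)\sqrt p.
\]
For odd $m$ this is immediate, since there is one point at infinity and $2\lfloor (m-1)/2\rfloor = m-1$. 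For even $m$ the bound is $(m-2)\sqrt p + 1$, and this is at most $(m-1)\sqrt p$. Equivalently, one can cite the Weil bound in the form of \cite[Theorem 11.23]{iwaniec2004analytic}, which states exactly $|\sum_x \chi(h(x))| \le (m-1)\sqrt p$ for squarefree non-constant $h$.

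The last step is bookkeeping: returning from $h$ to $f$ costs an error of at most $\deg g \le (d-m)/2$. We need $(m-1)\sqrt p + (d-m)/2 \le (d-1)\sqrt p$, which holds because $(d-m)/2 \le (d-m)\sqrt p$. The only real content is Weil's Riemann hypothesis for curves, which I would quote rather than reprove. The main care point is this reduction to the squarefree part together with the correct hypothesis, namely ``not a constant times a square''.
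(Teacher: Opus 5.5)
Your proof is correct and is essentially the paper's argument. The paper only cites Weil's bound (Schmidt, Theorem 2C', and Iwaniec--Kowalski, Theorem 11.23), and your reduction to the squarefree part $h$, followed by point-counting on $y^2 = h(x)$ and the Riemann hypothesis for curves, is the standard derivation of exactly that bound.

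Your caveat about the hypothesis is also right. For example, $f = \lambda X^2$ with $\lambda$ a nonsquare gives a sum of $-(p-1)$, so the lemma has to be read as ``not a constant times a square'', as in the cited references. This costs nothing in the paper's application, because $(X+h_1)(X+h_2)(X+h_3)(X+h_4)$ is monic.
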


\begin{proof}
This is \cite[Theorem 2C', p.\,43]{schmidt1976bilinear}, see also \cite[Theorem 11.23, p.\,289]{iwaniec2004analytic}.
\end{proof}

\begin{proposition} \label{prop:C-sum-bound}
Let $c, d \in \Z_+$ such that $d \mid c$, $(d, c/d) = 1$, and $c/d$ is odd square-free. Let $\eps > 0$, and   $d\leq H_2 \ll c^{1+\eps}$. Let $z_4(h) \in \C$ satisfy $|z_4(h)| \ll 1$ for $|h| \le H_2$. Then the sum $\msC_2$ given by \cref{eq:C-sum} with $\ell = 2$ is bounded by
\begin{equation}\label{eq:C-sum-bound}
    \msC_2 \ll_\eps c^{O(\eps)} \Bigg( \frac{c^2}{d} \Big(\frac{H_2}{d}\Big)^2 + c \Big(\frac{H_2}{d}\Big)^4 \Bigg).
\end{equation}
\end{proposition}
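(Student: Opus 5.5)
Write $q := c/d$, which is odd and square-free. The plan is to expand the fourth power in $\msC_2$ and sum over the complete variables $x, y$ first. For fixed $r \pmod d$, we have
\[
    \Big\vert\sum_{h\equiv r} z_4(h)\Big(\tfrac{(xh+y)^2-4}{q}\Big)\Big\vert^4
    =\sum_{h_1,h_2,h_3,h_4 \equiv r \pmod d} z_4(h_1)z_4(h_2)\bar{z_4(h_3)z_4(h_4)}\prod_{i=1}^4\Big(\tfrac{(xh_i+y)^2-4}{q}\Big).
\]
The Jacobi symbols are real, so complex conjugation only affects the $z$'s. Summing over $x,y \pmod q$ gives, by the Chinese remainder theorem, a product over primes $p\mid q$ of the local sums
\[
    \Sigma_p(h_1,\dots,h_4):=\sum_{x,y \pmod p}\prod_{i=1}^4\Big(\tfrac{(xh_i+y-2)(xh_i+y+2)}{p}\Big).
\]
We then bound $\msC_2 \ll \sum_r \sum_{h_1,\dots,h_4} \prod_{p\mid q}|\Sigma_p|$.

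To evaluate $\Sigma_p$, fix $x$ and view the summand as a polynomial in $y$ of degree $8$, with roots $2-xh_i$ and $-2-xh_i$. By \cref{lem:char-sum-bound}, the $y$-sum is $\le 7\sqrt p$ unless this polynomial is a square, i.e.\ unless its roots pair up with even multiplicity.

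The main step is to classify the degenerate configurations. Two roots coincide when $x(h_i-h_j)\equiv 0$ or $x(h_i-h_j)\equiv \pm 4 \pmod p$. If $p\nmid x$ and the $h_i$ are pairwise distinct modulo $p$, then each root can coincide with at most a bounded number of others, and all roots paired up forces $x$ into a set of $O(1)$ residues. Hence $\Sigma_p \ll p\cdot p^{1/2} + O(1)\cdot p$ when no pairing is imposed. More precisely, after also summing over $x$ and treating $y \mapsto y - xh_1$ so that the sum becomes a two-variable character sum, I expect
\[
    \Sigma_p \ll p \quad\text{generically,}\qquad \Sigma_p \ll p^2 \quad\text{when } \{h_i \bmod p\} \text{ pairs up } (h_1\equiv h_3,\ h_2\equiv h_4 \text{, etc.}).
\]
The cases $x\equiv 0$ contribute $O(p)$ trivially. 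For fixed $x\not\equiv 0$, the exceptional $x$ with $x(h_i-h_j)\equiv\pm4$ are $O(1)$ in number, each contributing $O(p)$. The remaining $x$ contribute $O(\sqrt p)$ each, which is $O(p^{3/2})$ in total. To reach the bound $p$, one can swap the order of summation, summing over $x$ for fixed $y$, or use the square-root cancellation of a genuinely two-dimensional sum (Deligne-type). If that is unavailable, I would settle for $p^{3/2}$ per prime. Then $\prod_p |\Sigma_p| \ll c^\eps\, q^{3/2}(q^2)^{\dots}$, more precisely $q^{3/2}\cdot (q^{1/2})^{\omega}$ over the primes $p$ dividing $\prod_{i<j}(h_i-h_j)$, where $\omega$ counts the primes at which the configuration degenerates.

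For the counting step, write $q=q_1q_2$, where $q_2$ is the product of the primes at which the $h_i$ pair up. The diagonal part, with $q_2=q$ and the $h_i$ pairing up modulo $q$, contributes at most $d\cdot q^2\cdot\#\{\text{paired tuples}\}$. Pairing modulo $q$, combined with $h\equiv r \pmod d$, means pairing modulo $c$, so genuine pairs give $(H_2/d)^2(1+H_2/c)^2$ tuples. This yields $d\,q^2(H_2/d)^2 = \frac{c^2}{d}(H_2/d)^2$ up to $c^{O(\eps)}$, the first term of \eqref{eq:C-sum-bound}. For the generic part, the bound $\Sigma_p\ll p$ gives $d\cdot q\cdot (H_2/d)^4 = c(H_2/d)^4$, the second term. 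The intermediate cases $q_2 \ne 1, q$ are handled by a divisor sum over $q_2\mid q$: the number of tuples pairing modulo $q_2$ is about $(H_2/d)^4/q_2^2 + (H_2/d)^2$, and the local gain is $q_2$. This interpolates, giving $d\,q\,q_2\big((H_2/d)^4/q_2^2+(H_2/d)^2\big)$, which is bounded by the two terms above.

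The main obstacle is establishing $\Sigma_p\ll p$, rather than $p^{3/2}$, in the non-paired case. I would approach it by writing the sum as $\sum_{x,y}\chi(F(x,y))$ with $F$ a product of eight linear forms in $(x,y,1)$. Such a sum is a sum over $\mathbb{P}^2$-type lines, and it has square-root cancellation in both variables unless $F$ is a square up to constants. This can be proved elementarily by substituting $u=xh_1+y$ and summing over $x$ first: the polynomial in $x$ then has roots $(\pm2-u)/(h_i-h_1)$ for $i \ge 2$, and the sum is small except for $O(1)$ values of $u$.
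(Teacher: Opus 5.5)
\textbf{There is a genuine gap, at the step you flag yourself.} Your reduction to the local sums $\Sigma_p$ via multiplicativity matches the paper's proof, and so does the final count over pairing patterns. What is missing is the bound $\Sigma_p \ll p$ when $h_1,\dots,h_4$ do not pair up modulo $p$, and none of the routes you propose gives it.

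\begin{itemize}
\item Fixing $x$ and applying \cref{lem:char-sum-bound} in $y$ gives $O(\sqrt p)$ for each of the $p$ values of $x$, so only $O(p^{3/2})$ in total.
\item Your final suggestion (substitute $u = xh_1+y$ and sum over $x$ first) has the same defect. An $O(\sqrt p)$ bound for the inner $x$-sum at all but $O(1)$ values of $u$ still adds up to $O(p^{3/2})$. Reaching $p$ needs cancellation in the outer $u$-sum too, and nothing in your argument produces it.
\item A ``Deligne-type'' two-variable bound is not automatic. $F$ being a non-square does not force $\sum_{x,y}\big(\frac{F(x,y)}{p}\big)\ll p$: if $F$ depends on a single linear form, the sum can be of size $p^{3/2}$. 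You have to use the specific shape of $F$.
\item The fallback of $p^{3/2}$ per prime does not prove the proposition. It replaces the generic term $c\,(H_2/d)^4$ by $c\,(c/d)^{1/2}(H_2/d)^4$. In the model case $N\approx\sqrt p$ this loss wipes out the entire saving.
\end{itemize}

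\textbf{The missing idea is a change of variables under which the two-dimensional sum factors.} The eight lines $xh_j+y=\pm 2$ form two pencils, through $(0,2)$ and $(0,-2)$. For $x\not\equiv 0$, set $u=(y-2)\bar{x}$ and $v=(y+2)\bar{x}$, the slopes seen from these two points. This is a bijection onto $\{(u,v): u\neq v\}$, with inverse $x=4\overline{(v-u)}$, $y=2(u+v)\overline{(v-u)}$. Since $x^2$ is a square, $\big(\frac{(xh_j+y-2)(xh_j+y+2)}{p}\big)=\big(\frac{(h_j+u)(h_j+v)}{p}\big)$. Adding back the diagonal $u=v$ (which contributes $p+O(1)$) is compensated by the terms with $x\equiv 0$ (which contribute $p-2$). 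Hence
\[
\Sigma_p=\Big|\sum_{u \pmod p}\prod_{j=1}^4\Big(\frac{h_j+u}{p}\Big)\Big|^2+O(1).
\]
The polynomial $\prod_j(X+h_j)$ is a non-square exactly when the $h_j$ do not pair up modulo $p$. In that case \cref{lem:char-sum-bound} gives $\Sigma_p\le 9p+O(1)$. This also handles the configurations with repeated $h_j$, which your case analysis (assuming the $h_j$ pairwise distinct) leaves out.

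With this input your counting goes through, provided you allow different pairing patterns at different primes of $c/d$ (the paper's Types I--III). That refinement gives the same final bound.
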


\begin{remark}
This bound is essentially optimal. Indeed, expanding the product from \cref{eq:C-sum} gives $4$ copies of the $h$-variable; the first term in \cref{eq:C-sum-bound} corresponds to the `diagonal' tuples $(h_1, h_2, h_3, h_4)$ where the variables $h_j$ pair up, while the second term matches the `generic' tuples $(h_1, h_2, h_3, h_4)$, for which one expects square-root cancellation in the complete variables $x, y \pmod{\tfrac{c}{d}}$. Since $\tfrac{c}{d}$ can be composite, there may be many terms interpolating between the diagonal and the generic ones.
\end{remark}

\begin{proof}[Proof of \cref{prop:C-sum-bound}]
We write $H := H_2$ to slightly simplify the notation.
We expand \cref{eq:C-sum} with $\ell = 2$, swap sums, and use the assumption that $z_4(h) \ll 1$ to obtain
\begin{equation}\label{eq:C2-swap-sums}
    \msC_2 \ll \sum_{\substack{|h_1|, \ldots, |h_4| \le H \\ h_1 \equiv \cdots \equiv h_4 \pmod{d}}} \Bigg\vert 
    \underbrace{\sum_{x, y \pmod{c/d}} \prod_{j=1}^4 \Big(\frac{(xh_j+y-2)(xh_j+y+2)}{c/d}\Big)}_{=:E_{c/d}(h_1,h_2,h_3,h_4)} \Bigg\vert.
\end{equation}
By the multiplicativity of the Jacobi symbol, the Chinese remainder theorem, and the fact that $\tfrac{c}{d}$ is square-free, we have
\begin{equation}\label{eq:E-product}
    E_{c/d}(h_1,h_2,h_3,h_4) = \prod_{\text{prime } p \mid \frac{c}{d}} E_p(h_1,h_2,h_3,h_4).
\end{equation}
Let $p \mid \tfrac{c}{d}$ be a prime (recall that $\tfrac{c}{d}$ is odd, so $p$ is odd). Note that the pairs $(x, y) \in \F_p^2$ such that $x \neq 0$ are in a bijection with the pairs $(u, v) \in \F_p^2$ such that $u \neq v$, by the change of variables
\[
    (u, v) = \Big((y-2)\bar{x}, (y+2)\bar{x}\Big)
    \qquad 
    \iff 
    \qquad 
    (x, y) = \Big(4\bar{(v-u)}, 2(u+v)\bar{(v-u)}\Big).
\]
Of course, $E_p(h_1,h_2,h_3,h_4)$ includes the terms with $x = 0$. By separating these terms and then using the change of variables above, we get
\begin{equation}\label{eq:Ep-expansion}
\begin{aligned}
    E_p(h_1,h_2,h_3,h_4) &= \sum_{\substack{x, y \pmod{p} \\ x \neq 0}} \prod_{j=1}^4 \Big(\frac{(h_j+(y-2)\bar{x})(h_j+(y+2)\bar{x})}{p}\Big)
    + 
    \sum_{y \pmod{p}} \Big(\frac{(y-2)(y+2)}{p}\Big)^4
    \\
    &=
    \sum_{\substack{u, v \pmod{p} \\ u \neq v}} \prod_{j=1}^4 \Big(\frac{(h_j+u)(h_j+v)}{p}\Big)
    +
    (p-2)
    \\
    &= 
    \sum_{u, v \pmod{p}} \prod_{j=1}^4 \Big(\frac{(h_j+u)(h_j+v)}{p}\Big)
    + O(1)
    =
    \Big\vert \sum_{u \pmod{p}} \prod_{j=1}^4 \Big(\frac{h_j+u}{p}\Big)\Big\vert^2 + O(1).
\end{aligned}
\end{equation}

Now for each tuple $(h_1, h_2, h_3, h_4)$, we distinguish four types of primes $p \mid \tfrac{c}{d}$:
\begin{itemize}
    \item[I.] $p$ is such that $p \mid h_1 - h_2$ and $p \mid h_3 - h_4$. 
    \item[II.] $p$ is such that $p \mid h_1 - h_3$ and $p \mid h_2 - h_4$ (but $p$ is not of Type I).
    \item[III.] $p$ is such that $p \mid h_1 - h_4$ and $p \mid h_2 - h_3$ (but $p$ is not of Type I or II).
    \item[IV.] $p$ is such that $h_1, h_2, h_3, h_4 \pmod{p}$ cannot be arranged into two pairs of equal residues modulo $p$. Then the polynomial $f(X) = (X+h_1)(X+h_2)(X+h_3)(X+h_4)$ is not a square in $\F_p[X]$, and applying \cref{lem:char-sum-bound} to the final sum in \cref{eq:Ep-expansion} yields
    \[
        E_p(h_1,h_2,h_3,h_4) \ll p.
    \]
\end{itemize}
If $p$ is of Type I, II, or III, then \cref{eq:Ep-expansion} implies that $E_p(h_1,h_2,h_3,h_4) \asymp p^2$. We let $g, g', g''$ denote the products of all primes $p \mid \tfrac{c}{d}$ of Types I, II, and III respectively, and $G := gg'g''$; note that $g, g', g''$ depend on $(h_1,h_2,h_3,h_4)$. By plugging \cref{eq:E-product} and the corresponding bound for $E_p(h_1,h_2,h_3,h_4)$ into \cref{eq:C2-swap-sums}, then swapping sums and using the divisor bound, we obtain
\[
\begin{aligned}
    \msC_2 
    &\ll 
    \sum_{\substack{|h_1|,\ldots,|h_4| \le H \\ h_1 \equiv \cdots \equiv h_4 \pmod{d}}}
    \sum_{\substack{G = gg'g'' \mid \frac{c}{d} \\ g \mid (h_1-h_2, h_3-h_4) \\ g' \mid (h_1-h_3, h_2-h_4) \\ g'' \mid (h_1-h_4, h_2-h_3)}} c^{o(1)} G^2 \frac{c}{dG}
    \\
    &\ll c^{o(1)}
    \frac{c}{d}
    \max_{G = gg'g'' \mid \frac{c}{d}}
    G
    \sum_{\substack{|h_1|, \ldots, |h_4| \le H \\ h_1 \equiv \cdots \equiv h_4 \pmod{d} \\ h_1 \equiv h_2 \pmod{g},\ h_3 \equiv h_4 \pmod{g} \\ h_1 \equiv h_3 \pmod{g'},\ h_2 \equiv h_4 \pmod{g'} \\ h_1 \equiv h_4 \pmod{g''},\ h_2 \equiv h_3 \pmod{g''}}} 1.
\end{aligned}
\]
Finally, we pick $h_1, h_2, h_3, h_4$ in this order in the last sum, and use the assumption $d \le H$, to bound
\[
\begin{aligned}
    \msC_2
    &\ll 
    c^{o(1)} 
    \frac{c}{d}
    \max_{G = gg'g'' \mid \frac{c}{d}}
    G H \Big(1 + \frac{H}{dg}\Big) \Big(1 + \frac{H}{dg'g''}\Big)\Big(1 + \frac{H}{dgg'g''}\Big)
    \\
    &\ll 
    c^{o(1)} 
    \frac{cH}{d}
    \max_{G = gg'g'' \mid \frac{c}{d}}
    G \Big(\frac{H}{d} + \frac{H^2}{d^2 G}\Big)\Big(1 + \frac{H}{dG}\Big).  
\end{aligned}
\]
After expanding the product, each term is maximized either when $G = 1$ or when $G = \frac{c}{d}$. Using this and the assumption that $d \le H \ll c^{1+\eps}$, we find that
\[
\begin{aligned}
    \msC_2 &\ll 
    c^{o(1)} \frac{cH}{d} \Big(
    \Big(\frac{H}{d} + \frac{H^2}{d^2}\Big)\Big(1 + \frac{H}{d}\Big)
    +
    \frac{c}{d} \Big(\frac{H}{d} + \frac{H^2}{cd}\Big)\Big(1 + \frac{H}{c}\Big)
    \Big)
    \\
    &\ll_\eps 
    c^{O(\eps)}
    \frac{cH}{d} \Big(
    \frac{H^2}{d^2} \cdot \frac{H}{d}
    +
    \frac{c}{d} \cdot \frac{H}{d}\Big).
\end{aligned}
\]
This precisely recovers the desired bound.
\end{proof}

\section{Bounds for bilinear forms with Kloosterman sums}

In this section we collect various bounds for bilinear forms with Kloosterman sums whose performance in practice depends on the particular assumptions. At the end we combine them to a uniform formula. We start with a `trivial' bound generalizing \cref{eq:trivial-Weil,eq:trivial-Fourier}.

\begin{lemma} \label{lem:bilinear-forms-trivial-bound}
Let $c \in \Z_+$, $M, N \in \Z \cap [1, c]$, and $\mI, \mJ \subset \Z$ be intervals with $|\mI| = M$, $|\mJ| = N$. Then for any complex sequences $(\alpha_m)_{m \in \mI}$, $(\beta_n)_{n \in \mJ}$ and any $a \in (\Z/c\Z)^\times$, one has
\begin{equation} \label{eq:trivial-bound}
    \mathop{\sum\sum}_{\substack{m \in \mI, n \in \mJ \\ (m, n, c) = 1}} \alpha_m \beta_n S(am, n; c)
    \ll c^{o(1)} \|\alpha\| \|\beta\| \min(c, \sqrt{MNc}).
\end{equation}
If $\mI = \{1, \ldots, M\}$ and $\mJ = \{1, \ldots, N\}$, then \cref{eq:trivial-bound} also holds without the constraint $(m, n, c) = 1$.
\end{lemma}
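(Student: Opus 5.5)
We prove the two bounds $c^{o(1)}\|\alpha\|\|\beta\|\sqrt{MNc}$ and $c^{o(1)}\|\alpha\|\|\beta\| c$ separately.

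\emph{The Weil-type bound.} Apply the Weil bound \cref{eq:Weil-bound} termwise. When $(m,n,c)=1$ it gives $|S(am,n;c)|\le \tau(c)\sqrt{c}$, since $(am,n,c)=(m,n,c)$ because $a$ is invertible. Cauchy--Schwarz then yields
\[
\sum_{m,n}|\alpha_m||\beta_n| \le \sqrt{MN}\,\|\alpha\|\|\beta\|,
\]
so the sum is at most $c^{o(1)}\|\alpha\|\|\beta\|\sqrt{MNc}$.

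For $\mI=\{1,\dots,M\}$, $\mJ=\{1,\dots,N\}$ without the coprimality constraint, write $e=(m,n,c)$, so that $|S(am,n;c)|\le\tau(c)\sqrt{ec}$. For each $e\mid c$, bound the contribution of pairs with $e\mid m$, $e\mid n$ by Cauchy--Schwarz over $m'=m/e\le M/e$ and $n'=n/e\le N/e$. This gives
\[
\sqrt{ec}\,\sqrt{(M/e)(N/e)}\,\|\alpha\|\|\beta\| = \sqrt{MNc/e}\,\|\alpha\|\|\beta\|,
\]
and summing over the $\tau(c)$ divisors $e$ keeps the bound. The initial-interval hypothesis is exactly what makes the number of multiples of $e$ at most $M/e$. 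For general intervals of length $M<e$ it can fail, e.g.\ $\mI=\mJ=\{c/2\}$.

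\emph{The Fourier bound.} Open the Kloosterman sum:
\[
\sum_{x\in(\Z/c\Z)^\times}\Big(\sum_m \alpha_m e\big(\tfrac{amx}{c}\big)\Big)\Big(\sum_n \beta_n e\big(\tfrac{n\bar x}{c}\big)\Big).
\]
Apply Cauchy--Schwarz over $x$, extend each factor to all $x \pmod c$, and use the large sieve / Plancherel for additive characters modulo $c$. Since $|\mI|\le c$, the residues $am$ are distinct mod $c$, so
\[
\sum_{x \bmod c}\Big|\sum_m\alpha_m e\big(\tfrac{amx}{c}\big)\Big|^2 = c\|\alpha\|^2,
\]
and likewise for $\beta$ after substituting $x\mapsto\bar x$. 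This gives $c\|\alpha\|\|\beta\|$ for the full sum without any constraint.

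The obstacle is inserting $(m,n,c)=1$ in the general-interval case. Use Möbius inversion: $\one_{(m,n,c)=1}=\sum_{e\mid (m,n,c)}\mu(e)$. For each $e\mid c$, restrict to $e\mid m$, $e\mid n$, writing $m=em'$, $n=en'$. Then
\[
S(aem',en';c)=\sum_x e\Big(\frac{am'x+n'\bar x}{c/e}\Big) = e\,S(am',n';c/e)
\]
when $e\mid c$, since $x$ runs over units mod $c$ mapping $e$-to-one... more precisely, by $\phi(c)/\phi(c/e)\le e$ to one onto units mod $c/e$. Apply the Fourier bound at modulus $c/e$. The sequences $m'$ lie in intervals of length $\le M/e+1$, and the residues $am'$ are distinct mod $c/e$ since the $m'$ are distinct with spread $\le c/e$. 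This gives at most
\[
\frac{\phi(c)}{\phi(c/e)}\cdot\frac{c}{e}\,\|\alpha\|\|\beta\| \ll c\,\|\alpha\|\|\beta\|.
\]
Summing over $\tau(c)$ divisors gives $c^{1+o(1)}\|\alpha\|\|\beta\|$. Taking the minimum of the two bounds completes the proof.
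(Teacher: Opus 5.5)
Your argument is correct, and it differs from the paper's mainly in the direction of the reduction. The $\sqrt{MNc}$ term is handled the same way in both (Weil plus Cauchy--Schwarz). For the term $c$ under the constraint $(m,n,c)=1$, the paper simply cites \cite[\S 3.2]{pascadi2025nonabelian}. It then removes the constraint for initial intervals by splitting according to the exact value $d=(m,n,c)$ and using $S(adm',dn';c)=\frac{\phi(c)}{\phi(c/d)}S(am',n';c/d)$ with $(m',n',c/d)=1$. This reduces to the \emph{constrained} bound at modulus $c/d$ with lengths $M/d,N/d$, giving $d\min(c/d,\sqrt{MNc/d^3})=\min(c,\sqrt{MNc/d})$.

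You go the other way. You observe that the Fourier bound $c\|\alpha\|\|\beta\|$ holds for any intervals of length at most $c$ with no constraint at all. You then insert the constraint by M\"obius inversion over $e\mid(m,n,c)$, using the same identity to land on unconstrained sums modulo $c/e$, each costing at most $\frac{\phi(c)}{\phi(c/e)}\cdot\frac{c}{e}\le c$. For the unconstrained Weil term you keep the factor $(m,n,c)^{1/2}$ in \cref{eq:Weil-bound}. Your version is self-contained and makes clear that only the Weil term is sensitive to the constraint, as your $\{c/2\}$ example shows. The paper's exact-gcd reduction, by contrast, is a reusable template: it is invoked again at the end of the proof of \cref{thm:bilinear-forms-4th-moment}, where the bound is only available under the constraint.

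A few small repairs are needed. The displayed identity $S(aem',en';c)=e\,S(am',n';c/e)$ is false in general; the factor is $\phi(c)/\phi(c/e)$, which you then correctly use. Distinctness of the residues $am' \bmod c/e$ should be justified by $\#\{m' : em'\in\mI\}\le\lceil M/e\rceil\le c/e$ (using $M\le c$ and $e\mid c$), not by a length bound $M/e+1$. Even a bounded multiplicity would only cost a constant. Finally, naming the divisor $e$ clashes with the notation $e(\cdot)$.
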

\begin{proof}
The bound \cref{eq:trivial-bound} with the second term in the minimum follows immediately from the Weil bound \cref{eq:Weil-bound} and Cauchy--Schwarz. For \cref{eq:trivial-bound} with the first term in the minimum, see \cite[\S 3.2]{pascadi2025nonabelian}.

To prove the last statement in \cref{lem:bilinear-forms-trivial-bound}, we write
\begin{equation}\label{eq:remove-gcd-condition}
\begin{aligned}
    \sum_{m \le M} \sum_{n \le N} \alpha_m \beta_n S(am, n; c)
    &=
    \sum_{d \mid c}
    \mathop{\sum\sum}_{\substack{m' \le \frac{M}{d}, n' \le \frac{N}{d} \\ (m', n', \frac{c}{d}) = 1}} \alpha_{dm'} \beta_{dn'} \frac{\phi(c)}{\phi(c/d)} S\Big(am', n'; \frac{c}{d}\Big)
    \\
    &\ll 
    c^{o(1)}
    \sum_{d \mid c} d \|\alpha\| \|\beta\| \min\Bigg(\frac{c}{d}, \sqrt{\frac{MNc}{d^3}}\Bigg),
\end{aligned}
\end{equation}
and finish using the divisor bound.
\end{proof}

We now state the direct consequence of our approach, which works well when the square-full part $c_2$ of $c$ is small. 


\begin{theorem} \label{thm:bilinear-forms-4th-moment}
Let $c = c_1c_2$ where $c_1, c_2 \in \Z_+$, $(c_1, c_2) = 1$, $c_1$ is square-free, and $c_2$ is square-full. Let $M, N \in \Z \cap [1, c]$ and $\mI, \mJ \subset \Z$ be intervals with $|\mI| = M$, $|\mJ| = N$. Then for any complex sequences $(\alpha_m)_{m \in \mI}$, $(\beta_n)_{n \in \mJ}$ and any $a \in (\Z/c\Z)^\times$, one has
\begin{equation}\label{eq:bilinear-forms-4th-moment}
    \mathop{\sum\sum}_{\substack{m \in \mI, n \in \mJ \\ (m, n, c) = 1}} \alpha_m \beta_n S(am, n; c)
    \ll \|\alpha\| \|\beta\| c^{1+o(1)}  F(M,N,c,c_2)^{\frac{1}{4}},
\end{equation}
where
\begin{equation}\label{eq:F-bound}
\begin{split} 
    F(M,N,c,c_2) 
    &:= 
    \frac{c_2(M+N)MN}{c^2}  + F_0(M, N, c) ,
    \\
   F_0(M,N,c) &:=  \frac{M^{\frac{1}{2}} ((c+MN)(c+N^2))^{\frac{1}{4}}}{c} \min\Big(\frac{c}{M}, c^{\frac{1}{2}}\Big)^{\frac{1}{4}}
   +
    \Big(\frac{N^2}{c^2} + \frac{N^{\frac{1}{2}}M (c+N^2)}{c^{\frac{5}{2}}}\Big)^{\frac{1}{4}}   .
\end{split}
\end{equation}
If $\mI = \{1, \ldots, M\}$ and $\mJ = \{1, \ldots, N\}$, then \cref{eq:bilinear-forms-4th-moment} also holds without the constraint $(m, n, c) = 1$.
\end{theorem}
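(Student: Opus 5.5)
The plan is to combine \cref{prop:kloosterman-to-characters} (with $2k=4$), \cref{prop:holder-step} (with $\ell=2$), and the bounds for $\msA$, $\msB$, $\msC_2$ from \cref{sec:counting-problems}.

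\emph{Reduction to the weighted bilinear form.} First reduce to the weighted form in \cref{eq:kloosterman-to-characters}. The weight $\nu_{(m,n,c_1)}$ is a multiplicative function of $(m,n,c_1)$, and it is nonzero. Using Möbius inversion over $e \mid c_1$, write $\one_{(m,n,c_1)=1}$ as a combination $\sum_{e\mid c_1} \lambda_e\, \nu_{(m,n,c_1)}\one_{e\mid (m,n)}$ with coefficients $\lambda_e \ll c^{o(1)}$ (e.g.\ via $\one_{(m,n,c_1)=1} = \sum_{e \mid (m,n,c_1)}\mu\ast\ldots$; concretely, since $\nu$ restricted to divisors of $c_1$ is invertible multiplicatively, $1/\nu$ expands as a divisor sum of bounded-by-$\tau$ coefficients).

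The restriction $e \mid m, n$ just replaces $\alpha,\beta$ by their restrictions to multiples of $e$. These restrictions are supported on subsets of the same intervals, so we may work with the interval lengths $M,N$ unchanged. Summing over $e\mid c_1$ costs only $c^{o(1)}$. Thus it suffices to bound the left side of \cref{eq:kloosterman-to-characters} by $\|\alpha\|\|\beta\|c^{1+o(1)}F^{1/4}$, i.e.\ to show
\[
\frac{c^2}{(H_1H_2)^2}\msS \ll c^{o(1)} F \cdot \frac{c^2}{c^{2}}\cdot c^{2}\Big/ c^{2}.
\]
More simply, since $c^{1+2\eps}(H_1H_2)^{-1/2}\msS^{1/4} \le c^{1+o(1)}F^{1/4}$ is equivalent to $\msS \ll c^{o(1)} (H_1H_2)^2 F$, this is the target, with $H_1\asymp c^{1+\eps}/M$ and $H_2 \asymp c^{1+\eps}/N$.

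\emph{Bounding the main terms.} The first term of \cref{prop:holder-step}, $(cc_2+H_1H_2)(H_1+H_2)$, divided by $(H_1H_2)^2$, gives
\[
\frac{(cc_2+H_1H_2)(M+N)MN}{c^3} \ll \frac{c_2(M+N)MN}{c^2} + \frac{M+N}{c}.
\]
The last piece is absorbed, since $F_0$ contains $N/c$ and a comparable term in $M$. I would double-check this by symmetry; if the $M$-term is missing, one can assume $M\le N$ by swapping roles, using $S(am,n;c)=S(m,an;c)$ and transposing.

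For the $d$-sum, insert \cref{prop:A-sum-bound}, \cref{prop:C-sum-bound}, and the minimum of \cref{prop:B-sum-bound} and \cref{prop:B-sum-bound-alt}. This gives
\[
\msA^{1/2}\msB^{1/4}\msC_2^{1/4}.
\]
Multiplying by $c_2 d$ and using $c_2\le d$, the factor $c_2 d\,\msA^{1/2}\ldots$ should be decreasing in $d$ once the $d$-powers from $\msA$, $\msB$, $\msC$ are collected. Roughly, $\msA^{1/2}\sim H_1H_2^{1/2}/d$, $\msB^{1/4}\lesssim d^{-1/2}$, and $\msC^{1/4}\lesssim d^{-3/4}$, giving a total of $d^{-9/4}$ against $c_2 d\le d^2$. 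So the worst case is $d=c_2$ giving a factor $\le 1$, and we may set $d=1$ in the remaining expressions. Where the power counting fails, the alternative bound $H_1^2H_2^2/d^3$ in the minimum handles large $d$.

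\emph{Matching $F_0$.} With $d=1$, divide by $(H_1H_2)^2$ and substitute $H_1=c/M$, $H_2=c/N$. The factor $\msA^{1/2}\msC_2^{1/4}$ gives $(H_1^2H_2)^{1/2}(c^2H_2^2+cH_2^4)^{1/4}$. Then:
\begin{itemize}
\item From \cref{prop:B-sum-bound}, $\msB\ll H_1^2H_2(1+H_1H_2/c)(1+H_1/c)$. Since $H_1\le c^{1+\eps}$, the last factor is harmless. Combined with $(1+H_1H_2/c)\to (c+MN)/(MN)$ and $(c^2H_2^2+cH_2^4)\to c^4(c+N^2)^{\ldots}$, this should produce the first term of $F_0$ with $\min(\cdot)=c/M$.
\item \cref{prop:B-sum-bound-alt} supplies the $c^{1/2}$ alternative in the $\min$, plus the extra term $N^{1/2}M(c+N^2)/c^{5/2}$.
\item The $N^2/c^2$ term comes from the $H_1^2/c$ part.
\end{itemize}

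\emph{Removing the coprimality constraint.} For $\mI,\mJ$ initial intervals, decompose over $e=(m,n,c)$ as in \cref{eq:remove-gcd-condition}. One must check that $F$ behaves well under $(M,N,c,c_2)\mapsto(M/e,N/e,c/e,c_2')$; with the factor $e\cdot(c/e)=c$ in front, monotonicity of $F$ in $M,N$ suffices.

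The main obstacle is this bookkeeping: verifying that every term of the product, after the Hölder combination and the $d$-sum, is dominated by one of the listed terms of $F$. In particular, the $d$-dependence must be shown to be harmless, using the minimum with $H_1^2H_2^2/d^3$ when the power count goes the wrong way.
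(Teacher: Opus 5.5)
\textbf{Gap in the final bookkeeping.} Your skeleton matches the paper: \cref{prop:kloosterman-to-characters} and \cref{prop:holder-step} with $k=\ell=2$, the bounds for $\msA,\msB,\msC_2$, monotonicity in $d$ to reduce to $d=c_2=1$, then removal of the gcd condition. The step you flag as bookkeeping, however, fails as stated: it is not true that every resulting term is dominated by a term of $F$.

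\begin{itemize}
\item The $H_1^2/c$ summand of \cref{prop:B-sum-bound-alt} (the $c/M^2$ inside $\min(c/M,\,c^{1/2}+c/M^2)$) contributes $\big((c+MN)(c+N^2)\big)^{1/4}c^{-3/4}$ to $\msS/(H_1H_2)^2$.
\item In its expansion, the $MN/c^2$ and $MN^3/c^3$ pieces are dominated by $N^{1/2}M(c+N^2)/c^{5/2}$, and the $cN^2$ piece gives $N^2/c^2$.
\item The $c\cdot c$ piece, however, gives $c^{-1/4}$. This is \emph{not} dominated by $F$ when $M\le c^{1/4}$ and $N\le c^{1/2}$; for $M=N=1$ one has $F\asymp c^{-3/8}$.
\item Using \cref{prop:B-sum-bound} instead does not avoid this, because for $M\le c^{1/4}$ the minimum of the two $\msB$-bounds genuinely has size $c/M^2$.
\end{itemize}

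So your argument proves the estimate only with the larger quantity in which the minimum is $\min(c/M, c^{1/2}+c/M^2)$, not with $F$. The missing ingredient is the trivial bound $\|\alpha\|\|\beta\|c^{o(1)}\sqrt{MNc}$ from \cref{lem:bilinear-forms-trivial-bound}. It also holds for the $\nu$-weighted form, since $|\nu_k|\sqrt{k}\le 1$. In the range $M\le c^{1/4}$, $N\le c^{1/2}$ one has $M^2N^2/c^2\le M^2/c\le M^{1/2}c^{-3/8}\le F_0$. So that range is covered by the trivial bound, and the $c^{-1/4}$ term can then be discarded.

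\textbf{Smaller points.}
\begin{itemize}
\item Your removal of the weight $\nu$ differs from the paper and is valid, indeed simpler, but not for the reason you give. Writing $1/\nu$ as a divisor sum produces coefficients $\prod_{p\mid e}(-p^2)$, which are far from $c^{o(1)}$. What works is simply $\lambda_e=\mu(e)$: since $\nu_1=1$, one has $\nu_{(m,n,c_1)}\sum_{e\mid (m,n,c_1)}\mu(e)=\one_{(m,n,c_1)=1}$. Applying the weighted bound to $\alpha\one_{e\mid\cdot}$ and $\beta\one_{e\mid\cdot}$ on the same intervals then costs only $\tau(c_1)$. The paper's identity $\one_{k=1}=\sum_{g\mid k}f(g)\nu_{k/g}$ instead forces a rescaling to modulus $c/g$.
\item Your $d$-accounting is slightly off. $\msB^{1/4}$ can decay only like $d^{-1/4}$ (from the factor $1+H_1d/c$, or the $d^{-1}$ term in \cref{prop:B-sum-bound-alt}). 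The net dependence is therefore $c_2d\cdot d^{-2}=c_2/d\le 1$, which still suffices.
\item When removing the gcd condition, monotonicity of $F$ in $M,N$ alone is not enough, because $c$ is also replaced by $c/e$. You need that $F(M/e,N/e,c/e,c_2)$ is non-increasing in $e$; each term of $F$ is invariant or decreasing under this joint scaling. You also need that the square-full part of $c/e$ is at most $c_2$.
\end{itemize}
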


\begin{remark} 
When $M = N$, the saving factor in \cref{thm:bilinear-forms-4th-moment} simplifies to 
\[
    F(N,N,c,c_2)^{\frac{1}{4}}
    \asymp
    \frac{c_2^{1/4} N^{3/4}}{c^{1/2}} 
    +
    \frac{N^{1/8}}{c^{3/32}}
    +
    \frac{N^{5/16}}{c^{3/16}}.
\]
The last two of these terms also appear in \cref{thm:bilinear-forms-balanced-lengths}.
\end{remark}

\begin{proof}[Proof of \cref{thm:bilinear-forms-4th-moment}]

Let $\eps > 0$, $H_1 := 2c^{1+\eps} M^{-1}$, $H_2 := 2c^{1+\eps}N^{-1}$.
We use \cref{prop:kloosterman-to-characters,prop:holder-step} with $k = \ell = 2$ to obtain
\begin{equation}\label{eq:kloosterman-to-S0}
      \sum_{m \in \mI} \sum_{n \in \mJ} \alpha_m \beta_n S(am, n; c) \nu_{(m,n,c_1)} \one_{(m,n,c_2) = 1}  
    \ll_\eps
    \|\alpha\| \|\beta\|
    \frac{c^{1+3\eps} \msS_0^{\frac{1}{4}}}{\sqrt{H_1H_2}},
\end{equation}
where
\begin{equation}\label{eq:S0-bound}
    \msS_0 := (cc_2 + H_1H_2)(H_1 + H_2)
    +
    \sum_{\substack{d \in \Z_+ \\ c_2 \mid d \mid c,\, 2d \nmid c \\ d \le \min(H_1, H_2)}} 
    \sum_{\substack{\gamma \in \Z/d\Z \\ \gamma^2 = 1}} c_2 d \msA^{\frac{1}{2}} \msB^{\frac{1}{4}} \msC_2^{\frac{1}{4}},
\end{equation}
for $\msA$, $\msB$, $\msC_2$ as in \cref{eq:A-sum,eq:B-sum,eq:C-sum}. By combining \cref{prop:A-sum-bound,prop:B-sum-bound,prop:B-sum-bound-alt,prop:C-sum-bound}, we obtain
\begin{equation}\label{eq:combine-props}
\begin{aligned}
    &c_2d \msA^{\frac{1}{2}} \msB^{\frac{1}{4}} \msC_2^{\frac{1}{4}}
    \ll_\eps c^{O(\eps)} c_2d  
    \Big(\frac{H_1^2H_2}{d^2}\Big)^{\frac{1}{2}}
    \\ 
    &\times 
    \min \left\{\frac{H_1^2H_2}{d^2}\Big(1 + \frac{H_1H_2}{c}\Big)\Big(1 + \frac{H_1d}{c}\Big), \frac{H_1H_2}{d}\Big(\Big(   \frac{c^{1/2} }{d^{1/2}  }   
    + \frac{H_1^2}{c } \Big)\Big(1 + \frac{H_1H_2}{c}\Big) + \frac{H_1^2 H_2^{1/2}}{cd^{1/4}}\Big)\right\}^{\frac{1}{4}}
    \\
    &\times 
    \Big(\frac{c^2}{d} \Big(\frac{H_2}{d}\Big)^2 + c \Big(\frac{H_2}{d}\Big)^4\Big)^{\frac{1}{4}}.
\end{aligned}
\end{equation}
As functions of $d$, all terms in the expansion of this product grow at most like
$$
    d^{1-\frac{2}{2}} d^{-\frac{1}{4}} d^{-\frac{3}{4}}
    =
    d^{-1},$$
so as $d$ varies in $[c_2, c]$, the right-hand side of \cref{eq:combine-props} attains its maximum at $d = c_2$. Moreover, when $d = c_2$, the right-hand side of \cref{eq:combine-props} is non-increasing in $c_2$ (since all terms grow at most like $c_2 c_2^{-1}$), so we may replace all instances of $c_2, d$ with $1$ for an upper bound. 
This gives
\[
\begin{aligned}
    &c_2d \msA^{\frac{1}{2}} \msB^{\frac{1}{4}} \msC_2^{\frac{1}{4}}
    \ll_\eps c^{O(\eps)}  
     (H_1^2H_2 )^{1/2}(H_1H_2)^{1/4}
    \\ 
    &\times 
    \min\Bigg(H_1 \Big(1 + \frac{H_1H_2}{c}\Big),  \Big( c^{1/2}   +  \frac{H_1^2 }{c} \Big)\Big(1 + \frac{H_1H_2}{c}\Big) + \frac{H_1^2 H_2^{1/2}}{c}\Bigg)^{\frac{1}{4}} 
    \Big(c^2 H_2^2 + c H_2^4\Big)^{\frac{1}{4}}.
\end{aligned}
\]
Combining this with \cref{eq:S0-bound} (using the divisor bound and \cref{eq:center}) gives
\[
\begin{aligned}
    \msS_0 &\ll_\eps 
    (cc_2 + H_1H_2)(H_1+H_2)
    \\
    &+c^{O(\eps)}  
     (H_1H_2)^{5/4} 
     \Big[\min\Big(H_1, c^{1/2}   +  \frac{H_1^2 }{c}\Big)   (c + H_1H_2 ) + H_1^2 H_2^{1/2}\Big]^{1/4}  
     (c + H_2^2)^{1/4}.
\end{aligned}
\]
The term $H_1H_2(H_1+H_2)$ from the first line can   be ignored at this point, using $H_1 \ll c^{1+\eps}$. 
We then divide by $(H_1H_2)^2$ and recall that $H_1 = 2c^{1+\eps} M^{-1}$ and $H_2 = 2c^{1+\eps} N^{-1}$ to obtain
\begin{equation}\label{almostfinal}
\begin{split}
    \frac{\msS_0}{(H_1H_2)^2} 
    &\ll_\eps  c^{O(\varepsilon)} \tilde{F}(M, N, c, c_2)
    \end{split}
\end{equation}
where $ \tilde F(M, N, c, c_2)$ is defined as 
\begin{equation} \label{eq:F-tilde}
   \frac{c_2(M+N)MN}{c^2} + \frac{M^{\frac{1}{2}} ((c+MN)(c+N^2))^{\frac{1}{4}}}{c} \min\Big(\frac{c}{M}, c^{\frac{1}{2}} + \frac{c}{M^2}\Big) ^{\frac{1}{4}}  +\frac{N^{\frac{1}{8}}M^{\frac{1}{4}} (c+N^2)^{\frac{1}{4}}}{c^{5/8}}.
\end{equation}




Summarizing the above discussion (combining \cref{eq:kloosterman-to-S0,almostfinal}), we have proved
\begin{equation}\label{eq:kloosterman-bound-with-nu}
    \Big\vert \sum_{m \in \mI} \sum_{n \in \mJ} \alpha_m \beta_n S(am, n; c) \nu_{(m,n,c_1)} \one_{(m,n,c_2) = 1} \Big\vert
    \ll 
    \|\alpha\| \|\beta\| c^{1+o(1)} \tilde F(M, N, c, c_2)^{\frac{1}{4}}.
\end{equation}

Next, we replace the weight $\nu_{(m,n,c_1)}$ with $\one_{(m,n,c_1) = 1}$. For square-free $k, g \in \Z_+$, it follows quickly from \cref{eq:nu-function} and multiplicativity that
\[
    \one_{k = 1} = \sum_{g \mid k} f(g)\, \nu_{k/g}, 
    \qquad\quad 
    f(g) := \prod_{\text{prime }p \mid g} \frac{1}{p^2-1} \asymp g^{-2}.
\]
We apply this fact with $k = (m, n, c_1)$ to obtain
\begin{equation}\label{eq:pass-to-nu}
\begin{aligned}
    \mathop{\sum\sum}_{\substack{m \in \mI, n \in \mJ \\ (m, n, c) = 1}} \alpha_m \beta_n S(am, n; c)
    &=
    \sum_{m \in \mI} \sum_{n \in \mJ} \sum_{g \mid (m, n, c_1)} f(g)\, \nu_{(\frac{m}{g},\frac{n}{g},\frac{c_1}{g})}  \alpha_m \beta_n S(am, n; c)  \one_{(m,n,c_2) = 1}
    \\
    &= 
    \sum_{g \mid c_1} f(g) \frac{\phi(c)}{\phi(c/g)} \sum_{\substack{m \in \mI \\ g \mid m}} \sum_{\substack{n \in \mJ \\ g \mid n}} \alpha_m \beta_n S(a\tfrac{m}{g}, \tfrac{n}{g}; \tfrac{c}{g}) \nu_{(\frac{m}{g},\frac{n}{g},\frac{c_1}{g})} \one_{(\frac{m}{g},\frac{n}{g},c_2) = 1}.
\end{aligned}
\end{equation}
Let $\mI_g := \{m' \in \Z : gm' \in \mI\}$ and $\mJ_g := \{n' \in \Z : gn' \in \mJ\}$, so $|\mI_g| \ll 1 + \tfrac{M}{g}$ and $|\mJ_g| \ll 1 + \tfrac{N}{g}$. If one of these intervals is empty, then the last sum over $m, n$ above vanishes. Otherwise, we can apply \cref{eq:kloosterman-bound-with-nu} with the choice of parameters
\[
    (c, c_1, c_2, \mI, \mJ, (\alpha_m)_{m \in \mI}, (\beta_n)_{n \in \mJ}) \gets (\tfrac{c}{g}, \tfrac{c_1}{g}, c_2, \mI_g, \mJ_g, (\alpha_{gm'})_{m' \in \mI_g}, (\beta_{gn'})_{n' \in \mJ_g})
\] 
to obtain
\[
    \sum_{\substack{m \in \mI \\ g \mid m}} \sum_{\substack{n \in \mJ \\ g \mid n}} \alpha_m \beta_n S(a\tfrac{m}{g}, \tfrac{n}{g}; \tfrac{c}{g}) \nu_{(\frac{m}{g},\frac{n}{g},\frac{c_1}{g})} \one_{(\frac{m}{g},\frac{n}{g},c_2) = 1}
    \ll 
    \|\alpha\| \|\beta\| \Big(\frac{c}{g}\Big)^{1+o(1)} \tilde F(1 + \tfrac{M}{g}, 1 + \tfrac{N}{g}, \tfrac{c}{g}, c_2)^{\frac{1}{4}}.
\]
From this, \cref{eq:pass-to-nu}, and the bounds $f(g) \asymp g^{-2}$, $\phi(c) \le g\phi(c/g)$, we deduce that
\[
    \mathop{\sum\sum}_{\substack{m \in \mI, n \in \mJ \\ (m, n, c) = 1}} \alpha_m \beta_n S(am, n; c)
    \ll 
    \|\alpha\|\|\beta\| c^{1+o(1)}
    \max_{g \mid c_1} g^{-2}
    \tilde F(1 + \tfrac{M}{g}, 1 + \tfrac{N}{g}, \tfrac{c}{g}, c_2)^{\frac{1}{4}}
\]
Upon inspecting \cref{eq:F-tilde}, we see that the maximum above is attained when $g = 1$, so \cref{eq:bilinear-forms-4th-moment} holds with $F$ replaced by $\tilde F$.

Finally, let us simplify the factor $\tilde F(M, N, c, c_2)$. We may take the term $ c/M^2$ out of the minimum in \cref{eq:F-tilde}; this gives a contribution of $(c+MN)(c+N^2))^{1/4}/c^{3/4}$, which is only relevant if $M \le c^{1/4}$. After expanding the product $(c+MN)(c+N^2)$, the contribution of the $c \cdot c$ term is $c^{-1/4}$, and this is only relevant when $N \le c^{1/2}$. This gives
\[
\begin{aligned}
    \tilde F(M, N, c, c_2) &\asymp \frac{c_2(M+N)MN}{c^2}  + \frac{M^{1/2} ((c+MN)(c+N^2))^{1/4}}{c} \min\Big(\frac{c}{M}, c^{1/2}\Big)^{1/4}
    \\
    &+
    \Big(\frac{cN^2 + cMN + N^3M}{c^3} + \frac{N^{1/2}M  (c+N^2) }{c^{5/2}}\Big)^{1/4} + \one_{M \le c^{1/4}} \one_{N \le c^{1/2}}\ c^{-1/4}.
\end{aligned}
\]
In the third term above, we have $cMN/c^3 \le N^{1/2}Mc/c^{5/2}$ and $N^3M/c^3 \le N^{1/2}MN^2/c^{5/2}$, so a couple of inner terms can be ignored. Moreover, when $M \le c^{1/4}$ and $N \le c^{1/2}$, the second term above is larger than $ M^2N^2/c^2$, which gives an acceptable bound on its own by \cref{lem:bilinear-forms-trivial-bound}. Thus the last term $\one_{M \le c^{1/4}} \one_{N \le c^{1/2}}\ c^{-1/4}$ can also be ignored, which proves \cref{eq:bilinear-forms-4th-moment}.


When $\mI = \{1, \ldots, M\}$ and $\mJ = \{1, \ldots, N\}$, one can remove the constraint $(m, n, c) = 1$ by the same argument as in \cref{eq:remove-gcd-condition}. Indeed,
writing $c_{2,d}$ for the square-full part of $c/d$, we have $F(\tfrac{M}{d},\tfrac{N}{d},\tfrac{c}{d},c_{2,d}) \le F(\tfrac{M}{d},\tfrac{N}{d},\tfrac{c}{d},c_2)$, and the expression $\frac{c}{d} \cdot dF(\tfrac{M}{d},\tfrac{N}{d},\tfrac{c}{d},c_2)^{1/4}$ is non-increasing in $d$. 
\end{proof}

\begin{theorem}[\cite{pascadi2025nonabelian}] \label{thm:non-ab-result}
Let $c = dd'e$ where $d, d', e \in \Z_+$, $d' \mid d$, and $(d, e) = 1$. Let $M, N \in \Z \cap [1, c]$ and $\mI, \mJ \subset \Z$ be intervals with $|\mI| = M$, $|\mJ| = N$. Then for any complex sequences $(\alpha_m)_{m \in \mI}$, $(\beta_n)_{n \in \mJ}$ and any $a \in (\Z/c\Z)^\times$, one has
\[
    \mathop{\sum\sum}_{\substack{m \in \mI, n \in \mJ \\ (m, n, c) = 1}} \alpha_m \beta_n S(am, n; c) 
    \ll
    \|\alpha\| \|\beta\| c^{1+o(1)}
    G(M,N,c,d)^{\frac{1}{6}},
\]
where, writing $f$ for the maximal positive integer satisfying $f^2 \mid cd$, we have
\[
    G(M,N,c,d) := \frac{dMN (M^2+N^2)}{c^3} + \frac{f(M^2+N^2)}{c^2} + \frac{f}{d^2}.
\]
\end{theorem}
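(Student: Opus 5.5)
The plan is to deduce this directly from \cite[Theorem 7.1]{pascadi2025nonabelian}, in the same way as \cref{thm:bilinear-forms-4th-moment} was deduced from \cref{prop:kloosterman-to-characters}. The only real work is to match normalizations and to pass from the weighted sum to the sharp coprimality condition $(m,n,c)=1$.

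\textbf{Step 1: the weighted bound.} I would first prove the weighted analogue
\[
    \Big\vert \sum_{m \in \mI} \sum_{n \in \mJ} \alpha_m \beta_n S(am, n; c)\, \nu_{(m,n,c_1)} \one_{(m,n,c_2)=1} \Big\vert \ll \|\alpha\|\|\beta\| c^{1+o(1)} G(M,N,c,d)^{1/6}.
\]
\begin{itemize}
    \item Apply \cref{prop:kloosterman-to-characters} with $k = 3$, i.e.\ the sixth moment. The factor $c^{1+2\eps}(H_1H_2)^{-1/2}\msS^{1/6}$ then becomes $c^{1+o(1)}\big(\msS/(H_1H_2)^3\big)^{1/6}$.
    \item Amplify. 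Since $(d,e)=1$ and $d' \mid d$, the modulus $dd'$ is coprime to $e = c/(dd')$. One applies either \cref{eq:amplification} with modulus $dd'$, or the more flexible non-abelian amplification of \cite[Proposition 5.1]{pascadi2025nonabelian} at modulus $d$.
    \item This reduces $\msS$ to a diagonal-type count: the number of six-tuples $(h_1,\dots,h_6)$ with $T^{a_1h_1}S\cdots T^{a_6h_6}S \equiv \gamma I$ modulo the amplification modulus. These tuples are weighted by the remaining character $\chi^\circ$ modulo the complementary part.
    \item Bound that remaining character trivially by its number of fixed points on the projective line, using \cref{lem:non-abelian-chars}$(ii)$. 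For $g \not\equiv \pm I$ modulo a prime power, the number of fixed points is controlled by a square-root-type quantity of the modulus. This is where $f$, the largest integer with $f^2 \mid cd$, enters.
    \item Count the six-tuples with the matrix congruence by the elementary arguments of \cite[Appendix A]{pascadi2025nonabelian}, the six-variable analogue of \cref{lem:h14-bound}. This produces the three terms of $G$:
    \begin{itemize}
        \item $dMN(M^2+N^2)/c^3$ from generic tuples;
        \item $f(M^2+N^2)/c^2$ from tuples lying over a scalar matrix modulo part of the modulus;
        \item $f/d^2$ from the fully degenerate terms after dividing by $(H_1H_2)^3$.
    \end{itemize}
\end{itemize}

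\textbf{Step 2: replace the weight.} Next I would replace $\nu_{(m,n,c_1)}$ by $\one_{(m,n,c_1)=1}$, exactly as in \cref{eq:pass-to-nu}. Write $\one_{k=1} = \sum_{g \mid k} f(g)\nu_{k/g}$ with $f(g) \asymp g^{-2}$, rescale $m,n,c$ by $g \mid c_1$, and apply the weighted bound to $(c/g, \mI_g, \mJ_g)$. This requires checking that $g^{-2}(c/g)\, G(1+M/g, 1+N/g, c/g, d)^{1/6}$ is maximized at $g=1$ up to $c^{o(1)}$. Since $g$ is coprime to the square-full part, $d$ and $f$ are essentially unchanged (possibly $d$ must be replaced by $(d,c/g)$). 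The extra $g^{-2}$ loses nothing, so this check should be routine.

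\textbf{Main obstacle.} I expect the main difficulty to be bookkeeping rather than new ideas: confirming that the hypotheses of \cite[Theorem 7.1]{pascadi2025nonabelian} exactly cover a factorization $c = dd'e$ with $d' \mid d$ and $(d,e)=1$, and that their $f$ coincides with ours. If their statement assumes $(mn,c)=1$ or restricts to dyadic intervals, I would first try to remove this by the divisor-sum decomposition of Step 2. In the worst case, I would redo the fixed-point estimate for $\chi^\circ_{p^k}$ locally, to see the $\sqrt{p^k}$-type dependence producing $f$.
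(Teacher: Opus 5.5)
Your top-level plan, invoking \cite[Theorem 7.1]{pascadi2025nonabelian}, is exactly what the paper does; its proof is that citation and nothing more. However, you have misidentified what needs adjusting. According to the paper, the cited theorem already concerns the sum with the sharp condition $(m,n,c)=1$ and the same $d,d',e,f$. So your Step~2, trading $\nu_{(m,n,c_1)}$ for $\one_{(m,n,c_1)=1}$, is unnecessary.

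The one hypothesis that does not match is $M\ge N$, and your proposal never addresses it. The paper removes it by symmetry:
\begin{itemize}
\item the substitution $x\mapsto \bar a\bar x$ gives $S(am,n;c)=S(an,m;c)$;
\item the condition $(m,n,c)=1$ is symmetric in $m,n$;
\item so when $M<N$ you exchange $(\mI,\alpha)$ with $(\mJ,\beta)$ and apply the theorem with the roles reversed;
\item both cases are then covered by the symmetric bound $G(M,N,c,d)$.
\end{itemize}

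Step~1 should not be offered as an argument. It asserts, rather than derives, the six-variable count producing the three terms of $G$, which is the entire content of the cited theorem. One ingredient is also false as stated: an element $g\not\equiv\pm I\pmod{p^k}$ need not have $O(p^{k/2})$ fixed points on $\P^1(\Z/p^k\Z)$. For $k\ge 3$, $T^{p^{k-1}}\not\equiv\pm I\pmod{p^k}$ fixes all $p^{k-1}$ points $(1;v)$ with $p\mid v$.

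What is true is the following. If $j<k$ is maximal with $g\equiv\gamma I\pmod{p^j}$, the fixed-point equation reduces to a primitive quadratic congruence modulo $p^{k-j}$, giving $\ll p^{(k+j)/2}$ fixed points. So any bound in terms of $f$ (with $f^2\mid cd$) requires controlling the level of $g$, not merely excluding $g\equiv\pm I$. Since the statement is imported verbatim from \cite{pascadi2025nonabelian}, none of this needs to be redone here.
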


\begin{proof}
This is \cite[Theorem 7.1]{pascadi2025nonabelian}, with the assumption that $M \ge N$ removed by symmetrizing the upper bound.
\end{proof}

By combining \cref{thm:bilinear-forms-4th-moment,thm:non-ab-result}, we deduce a result that does not depend on the factorization of $c$. For nearly-square-free moduli, it is better to use \cref{thm:bilinear-forms-4th-moment} directly.

\begin{theorem} \label{thm:bilinear-forms-general}
Let $c \in \Z_+$, $M, N \in \Z \cap [1, c]$, and $\mI, \mJ \subset \Z$ be intervals with $|\mI| = M$, $|\mJ| = N$. Then for any complex sequences $(\alpha_m)_{m \in \mI}$, $(\beta_n)_{n \in \mJ}$ and any $a \in (\Z/c\Z)^\times$, one has
\begin{equation}\label{eq:bilinear-forms-general}
\begin{aligned}
    \mathop{\sum\sum}_{\substack{m \in \mI, n \in \mJ \\ (m, n, c) = 1}} \alpha_m \beta_n S(am, n; c) 
    \ll
    \|\alpha\| \|\beta\| c^{1+o(1)}
    H(M,N,c),
\end{aligned}
\end{equation}
where
\[
\begin{aligned}
    H(M, N, c) &:= 
      \frac{M^{1/8} ((c+MN)(c+N^2))^{1/16}}{c^{1/4}} \min\Big(\frac{c}{M}, c^{1/2}\Big)^{1/16}
      + \Big(\frac{N^2}{c^2} + \frac{N^{1/2}M  (c+N^2) }{c^{5/2}}\Big)^{1/16}
    \\
    &+
    \frac{M^{1/3} + N^{1/3}}{c^{1/5}}
    +
    \frac{M^{1/2}N^{1/6} + M^{1/6}N^{1/2}}{c^{7/18}}
    +
    \frac{M^{1/15} + N^{1/15}}{c^{1/15}}.
\end{aligned}
\]
If $\mI = \{1, \ldots, M\}$ and $\mJ = \{1, \ldots, N\}$, then \cref{eq:bilinear-forms-general} also holds without the constraint $(m, n, c) = 1$. 
\end{theorem}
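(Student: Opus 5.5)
We combine \cref{thm:bilinear-forms-4th-moment} and \cref{thm:non-ab-result} by splitting according to the size of the square-full part of $c$. Write $c = c_1c_2$ with $c_1$ square-free, $c_2$ square-full and $(c_1,c_2)=1$. \Cref{thm:bilinear-forms-4th-moment} gives the saving $F(M,N,c,c_2)^{1/4}$. The part $F_0^{1/4}$ is exactly the first two terms of $H(M,N,c)$, because $(x+y)^{1/4}\asymp x^{1/4}+y^{1/4}$ and $\big(\frac{M^{1/2}(\cdots)^{1/4}}{c}\big)^{1/4}=\frac{M^{1/8}(\cdots)^{1/16}}{c^{1/4}}$. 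The remaining part is $\big(c_2(M+N)MN/c^2\big)^{1/4}$, which is harmless when $c_2$ is small. So the whole task reduces to showing that, for every factorization type of $c$, either
\[
\Big(\frac{c_2(M+N)MN}{c^2}\Big)^{1/4}
\]
or the saving $G(M,N,c,d)^{1/6}$ from \cref{thm:non-ab-result} (for some admissible $c=dd'e$) is bounded by the last three terms of $H$.

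The plan is to fix a threshold $c_2 = c^{\theta}$ and use \cref{thm:bilinear-forms-4th-moment} when $c_2$ is small, and \cref{thm:non-ab-result} otherwise. In the second case I choose the factorization as follows. Let $e := c_1$ be the square-free part. Write $c_2=\prod p^{k_p}$ with $k_p\ge 2$. Set $d=\prod p^{\lceil k_p/2\rceil}$ and $d'=\prod p^{\lfloor k_p/2\rfloor}$, so that $d'\mid d$, $(d,e)=1$ and $dd'=c_2$. Then $c_2^{1/2}\le d\le c_2^{2/3}$, the last because $\lceil k/2\rceil\le 2k/3$ for $k\ge2$. For $f$ we have $f^2\mid cd=c_1c_2d$ with $c_1$ square-free, so $f^2\mid c_2 d$ and $f\le (c_2d)^{1/2}\le c_2^{5/6}$. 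Hence
\[
G\ll \frac{c_2^{2/3}MN(M^2+N^2)}{c^3}+\frac{c_2^{5/6}(M^2+N^2)}{c^2}+\frac{c_2^{5/6}}{c_2}.
\]
The last term, $c_2^{-1/6}$, is the only one that requires $c_2$ large. The two theorems therefore interpolate: small $c_2$ favours the first bound, large $c_2$ the second. I will bound the pointwise minimum of the two savings over $c_2\in[1,c]$.

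Concretely I bound
\[
\min\Big(A+B,\;C+D+E\Big)\le \min(A,C+D+E)+\min(B,C+D+E),
\]
and inside each minimum I take geometric means of pairs, $\min(x,y)\le x^{\lambda}y^{1-\lambda}$, choosing exponents that eliminate $c_2$. Here
\[
A=\Big(\frac{c_2M^2N}{c^2}\Big)^{1/4},\quad B=\Big(\frac{c_2MN^2}{c^2}\Big)^{1/4},
\]
\[
C=\Big(\frac{c_2^{2/3}MN(M^2+N^2)}{c^3}\Big)^{1/6},\quad D=\Big(\frac{c_2^{5/6}(M^2+N^2)}{c^2}\Big)^{1/6},\quad E=c_2^{-1/36}.
\]

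Pairing $A$ with $E$ eliminates $c_2$ with exponents $\lambda=1/10$ and $9/10$. This gives $(M^2N/c^2)^{1/40}$, which should be dominated by the $(M^{1/15}+N^{1/15})/c^{1/15}$ term or the others. I will check this by comparing exponents, perhaps after also using $C$ or $D$ in three-way geometric means. Similarly, $\min(A,C)$ and $\min(A,D)$ are handled by geometric means, or more simply by observing that $C$ and $D$ are increasing in $c_2$, so one only needs $C,D$ evaluated at the crossover point where $A\approx E$. The terms $(M^{1/3}+N^{1/3})/c^{1/5}$ and $(M^{1/2}N^{1/6}+M^{1/6}N^{1/2})/c^{7/18}$ presumably arise this way, from $D$ and $C$ at the crossover $c_2$.

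The main obstacle is precisely this exponent bookkeeping. One must find a decomposition whose output matches the stated terms of $H$ exactly, possibly with slack, while the factorization may not achieve $d\approx c_2^{1/2}$ in the worst case. I will first try the crude worst-case bounds $d\le c_2^{2/3}$ and $f\le c_2^{5/6}$. If these do not reproduce the exponents $1/5$, $7/18$ and $1/15$, I will instead distinguish the exponent structure more carefully, for instance by using $\lfloor k/2\rfloor\ge k/3$.

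Finally, the claim without the constraint $(m,n,c)=1$ for initial intervals follows exactly as in \cref{eq:remove-gcd-condition}. One checks that $\frac{c}{d}\cdot d\, H(M/d,N/d,c/d)$ is non-increasing in $d$, which holds since every term of $H$ has a non-negative power of $M$, $N$ and a non-positive total homogeneity when $M$, $N$ and $c$ are scaled together. Both ingredient theorems already come with this feature.
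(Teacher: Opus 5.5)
You handle $F_0^{1/4}$ and the unconstrained case correctly. The gap is that you use \cref{thm:non-ab-result} only with the factorization $d\approx c_2^{1/2}$, and that interpolation cannot reach $H(M,N,c)$.

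For that choice, the only term of $G$ that decays in $c_2$ is $f/d^2$. This is at best $c_2^{-1/4}$, with equality when $c_2=s^4$ for square-free $s$. Write $X=(M+N)MN/c^2$. The best you can then get is $\min\big((c_2X)^{1/4},c_2^{-1/24}\big)\le X^{1/28}$, and only $X^{1/40}$ with your cruder bound $f/d^2\le c_2^{-1/6}$. This is not a bookkeeping artefact. Take $M=N=c^{1/2}$ and $c=c_1s^4$ with $s^4\asymp c^{3/7}$. Then both $F^{1/4}$ and your $G^{1/6}$ are $\asymp c^{-1/56}$, whereas $H(c^{1/2},c^{1/2},c)\asymp c^{-1/32}$. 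In particular, the term $(M^{1/15}+N^{1/15})/c^{1/15}$ cannot come out of your two ingredients.

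The missing idea is a second application of \cref{thm:non-ab-result} with $d=c_2$, $d'=1$, $e=c_1$. Then $f=c_2$ and
\[
G=c_2\frac{M^2+N^2}{c^2}\Big(1+\frac{MN}{c}\Big)+\frac{1}{c_2}=:c_2P+\frac{1}{c_2}.
\]
Its $1/c_2$ decay is strong enough to balance the fourth-moment term:
\[
\min\big((c_2X)^{1/4},c_2^{-1/6}\big)\le X^{1/10}\ll \frac{M^{1/3}+N^{1/3}}{c^{1/5}}.
\]
Its growing part is in turn balanced against the $c_2^{-1/24}$ from your factorization:
\[
\min\big((c_2P)^{1/6},c_2^{-1/24}\big)\le P^{1/30}.
\]
This is exactly where $(M^{1/15}+N^{1/15})/c^{1/15}$ comes from. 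The total saving is then bounded by the sum of three pieces: $F_0^{1/4}$; the $1/6$-th power of the first two terms of your $G$ with $c_2$ replaced by $c$; and the minimum of the three $c_2$-dependent pieces.

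Two smaller corrections:
\begin{itemize}
\item The terms $(M^{1/3}+N^{1/3})/c^{1/5}$ and $(M^{1/2}N^{1/6}+M^{1/6}N^{1/2})/c^{7/18}$ arise simply from putting $c_2\le c$ in those first two terms, not from a crossover.
\item For the exponent $1/5$ you need $f\le c_2^{4/5}$, i.e.\ locally $\lfloor (j+\lceil j/2\rceil)/2\rfloor\le 4j/5$; your bound $f\le c_2^{5/6}$ only gives $c^{7/36}$. Likewise $f/d^2\le (c_2d)^{1/2}d^{-2}\le c_2^{-1/4}$, not merely $c_2^{-1/6}$.
\end{itemize}
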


\begin{proof}[Proof of \cref{thm:bilinear-forms-general}]
Let $c_1, c_2 \in \Z_+$ be such that $c_1c_2 = c$, $c_1$ is square-free, $c_2$ is square-full, and $(c_1, c_2) = 1$. We start by obtaining two bounds from \cref{thm:non-ab-result}. First, we apply it with $d = f = c_2$, $d' = 1$, $e = c_1$, in which case
\begin{equation}\label{g1}
G(M, N, c, d) = c_2\frac{M^2 + N^2}{c^2} \Big( 1 + \frac{MN}{c}\Big) + \frac{1}{c_2}.
\end{equation}
Next we apply it in a more sophisticated way, where we roughly choose $d \approx c_2^{1/2}$. More precisely, let us write
\[
    c_2 = \prod_{j=2}^\infty s_j^j = s_2^2 s_3^3 s_4^4 \cdots,
\]
where the $s_j$ are pairwise coprime and squarefree (and only finitely many are different from 1). With this notation, we choose 
\[
    d = \prod_{j=2}^{\infty} s_j^{\lceil j/2 \rceil} = s_2 s_3^2 s_4^2 \cdots,
    \qquad 
    d' = \prod_{j=2}^{\infty} s_j^{\lfloor j/2 \rfloor} = s_2 s_3 s_4^2 \cdots, 
    \qquad
    e = c_1,
\]
so that
$$ f = \prod_{j=2}^{\infty} s_j^{\lf \frac{j + \lceil j/2 \rceil}{2} \rf}.$$
It is easy to see that
$$f \leq c_2^{4/5}, \quad d \leq c_2^{2/3}, \quad \frac{f}{d^2} \leq c_2^{-1/4},$$
so that we obtain
\begin{equation}\label{g2}
G(M, N, c, d) \le  \frac{M^2 + N^2}{c^2} \Big( c_2^{4/5} + c_2^{2/3}\frac{MN}{c}\Big) + \frac{1}{c_2^{1/4}}.
\end{equation}
Combining \cref{thm:bilinear-forms-4th-moment} and \cref{thm:non-ab-result} with the bounds \eqref{g1} and \eqref{g2}, we obtain
\begin{equation}\label{eq:bil-kl-bound-by-H}
    \begin{split}
    \mathop{\sum\sum}_{\substack{m \in \mI, n \in \mJ \\ (m, n, c) = 1}} \alpha_m \beta_n S(am, n; c) 
    \ll
    \|\alpha\| \|\beta\| c^{1+o(1)} H(M,N,c,c_2)
\end{split}
\end{equation}
(and without the condition $(m, n, c) = 1$ if $\mI = \{1, \ldots, M\}$, $\mJ = \{1, \ldots, N\}$), where
\begin{displaymath}
    \begin{split}
    &H(M,N,c,c_2) := F_0(M, N, c) 
    +\Big( \frac{M^2 + N^2}{c^2} \Big( c^{4/5} + c^{2/3}\frac{MN}{c}\Big) \Big)^{1/6} \\
& + \min\Bigg(\Big(c_2\frac{(M+N)MN}{c^2}\Big)^{1/4} , \Big(c_2\frac{M^2 + N^2}{c^2} \Big( 1 + \frac{MN}{c}\Big) + \frac{1}{c_2}\Big)^{1/6}, \Big(\frac{1}{c_2^{1/4}}\Big)^{1/6} \Bigg).
\end{split}
\end{displaymath}
In the second line, the minimum of the first two entries is bounded by
$$ 
\Big(\frac{(M+N)MN}{c^2}\Big)^{\frac{1}{4} \cdot \frac{2}{5}} + \Big(c_2\frac{M^2 + N^2}{c^2} \Big( 1 + \frac{MN}{c}\Big)\Big)^{1/6} .   
$$
Combining this with the last entry, we obtain 
$$ \Big(\frac{(M+N)MN}{c^2}\Big)^{1/10} + \Big( \frac{M^2 + N^2}{c^2} \Big( 1 + \frac{MN}{c}\Big)\Big)^{\frac{1}{6} \cdot \frac{1}{5}}.  $$
Thus altogether we have 
\begin{equation}\label{eq:H-combined-bound}
    \begin{split}
   H(M,N,c,c_2)  &\ll  F_0(M, N, c) + 
    \Big( \frac{M^2 + N^2}{c^{6/5}}  + \frac{(M^2 + N^2)MN}{c^{7/3}} \Big)^{1/6} \\
    &
    +  \Big(\frac{(M+N)MN}{c^2}\Big)^{1/10} + \Big( \frac{M^2 + N^2}{c^2} \Big( 1 + \frac{MN}{c}\Big)\Big)^{1/30}.
    \end{split}
\end{equation}
The first term in the second line of \cref{eq:H-combined-bound} is dominated by  $\tfrac{M^{1/3} + N^{1/3}}{c^{1/5}}$ from the first line, so it can be ignored. The remaining portion of the second line is
\[
    \asymp \frac{M^{1/15} + N^{1/15}}{c^{1/15}} + \frac{M^{1/10}N^{1/30} + M^{1/30}N^{1/10}}{c^{1/10}},
\]
and the second term here can also be ignored, since it is dominated by $\tfrac{M^{1/3} + N^{1/3}}{c^{1/5}} + \tfrac{M^{1/15} + N^{1/15}}{c^{1/15}}$. 
Overall, we obtain
\[
\begin{aligned}
    H(M,N,c,c_2)
    &\ll F_0(M, N, c)  + 
    \frac{M^{1/3} + N^{1/3}}{c^{1/5}}
    +
    \frac{M^{1/2}N^{1/6} + M^{1/6}N^{1/2}}{c^{7/18}}
    +
    \frac{M^{1/15} + N^{1/15}}{c^{1/15}}.
\end{aligned}
\]
Plugging this into \cref{eq:bil-kl-bound-by-H} gives the desired result.
\end{proof}

\begin{proof}[Proof of \cref{thm:bilinear-forms-balanced-lengths}]
We may assume without loss of generality that $|\mI| = |\mJ| = N$, by extending the sequences $(\alpha_m)$, $(\beta_n)$ with zeros. 
Taking $M = N$ in \cref{thm:bilinear-forms-general} gives a saving factor of
\[
    H(N, N, c) \asymp  \frac{N^{1/8}(c + N^2)^{1/8}}{c^{1/4}} \min\Big( \frac{c}{N}, c^{1/2}\Big)^{1/16} + \Big(\frac{N^{3/2}}{c^{3/2}} + \frac{N^{7/2}}{c^{5/2}}\Big)^{1/16}
    +
    \frac{N^{1/3}}{c^{1/5}}
    +
    \frac{N^{2/3}}{c^{7/18}}
    +
    \frac{N^{1/15}}{c^{1/15}}.
\]
It is straightforward to check that this equals
$$\begin{cases} 
N^{2/3}/c^{7/18}, & N \geq c^{29/51},\\ 
N^{5/16}{c^{3/16}}, & c^{1/2} \leq N \leq c^{29/51},\\ 
N^{1/8}/c^{3/32}, & c^{13/28} \leq N \leq c^{1/2},\\ 
N^{1/15}/c^{1/15}, & N \leq c^{13/28},
\end{cases}$$
so that 
$$H(N, N, c) \ll \frac{N^{1/15}}{c^{1/15}} + \frac{N^{1/8}}{c^{3/32}} + \frac{N^{5/16}}{c^{3/16}} + \frac{N^{2/3}}{c^{7/18}}. $$

The first term can be dropped using the trivial bound in \cref{lem:bilinear-forms-trivial-bound}. Indeed, if $N \le c^{13/28}$, then $c \cdot \tfrac{N^{1/8}}{c^{3/32}} \ge N\sqrt{c}$, so the term $N^{1/8}/c^{3/32}$ is enough to give a correct upper bound on its own. On the other hand, if $N > c^{13/28}$, the first term is dominated by $N^{1/8}/c^{3/32}$.
This completes the proof.
\end{proof}



We complement \cref{thm:bilinear-forms-general} with one more bound, which is useful for unbalanced intervals. We start with a preparatory lemma
 (see \cite[Proposition 23]{blomer2015second} for a version with odd $c$). For $m, n, h \in \mathbb{Z}$ and $c \in \mathbb{Z}_+$ consider the character sum
$$S(m ,n, h; c) = \sum_{\substack{x \, (\text{mod } c)\\ (x(x+h) , c) = 1}} e\Big(\frac{m\bar{x} + n\overline{(x+h)}}{c}\Big).$$
We recall the notation \eqref{sqrtgcd}. 

\begin{lemma}\label{lem-expsum}
We have $S(m ,n, h; c) \ll c^{1/2+o(1)} (m+n, hm, hn, c^{1/2})$.
\end{lemma}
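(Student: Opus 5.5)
The plan is to reduce to prime powers by twisted multiplicativity, and then bound each prime-power sum by the Weil bound when the exponent is $1$ and by $p$-adic stationary phase when it is at least $2$.

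\emph{Reduction to prime powers.} If $c = c'c''$ with $(c',c'')=1$, the Chinese remainder theorem gives
\[
    S(m,n,h;c) = S(m\bar{c''}, n\bar{c''}, h; c')\, S(m\bar{c'}, n\bar{c'}, h; c'').
\]
The quantity $(m+n, hm, hn, c^{1/2})$ from \eqref{sqrtgcd} is multiplicative in $c$. Multiplying $m,n$ by a unit changes none of the local gcds. The factor $\tau(c)$ absorbs the constants, so it suffices to treat $c = p^k$. We may also replace $h$ by $h \bmod c$.

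\emph{Case $k=1$.} Here the right-hand side is $p^{1/2}$, since $e_p/2 = 1/2$ makes the gcd factor $1$. Write $x$ and $x+h$ as inverses of new variables. If $h\equiv 0$, the sum is a Ramanujan sum $r_{m+n}(p)$. This is bounded by $p$ when $p\mid m+n$ and by $1$ otherwise. The claimed bound with $(m+n,hm,hn,p^{1/2})$ should therefore be read with $h\equiv0$ giving $hm\equiv hn\equiv 0$, and the gcd exponent capped at $1/2$. So in this case I would need the bound $p^{1/2}$ when $p \mid m+n$, which is false for $r_p(p)=p-1$. I would therefore check the convention: presumably $h=0$ gives gcd $\min(v_p(0),1/2)$ with $v_p(0)=\infty$, and one really needs $\min(v_p(g), e_p/2)$ allowing half-integer exponents. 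Then the factor is $p^{1/2}$ and the bound is $p$, which is fine.

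For $h\not\equiv0$, the phase $m\bar x+n\overline{x+h}$ is a rational function that is nonconstant unless $m\equiv n\equiv 0$. Its poles are simple at two distinct points. The Weil bound for rational exponential sums (Bombieri/Weil) then gives $\ll p^{1/2}$. If $m\equiv n\equiv0$ the sum is $p-2$, and then $p\mid hm,hn,m+n$, so the gcd factor is $p^{1/2}$ and the bound is again fine.

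\emph{Case $k\ge2$.} I would use the standard $p$-adic stationary-phase evaluation. Write $x = y + p^{\lceil k/2\rceil} z$. Expanding $\bar x$ and $\overline{x+h}$ to first order, the sum over $z$ forces the congruence
\[
    f'(y) = -m\bar y^2 - n\overline{(y+h)}^2 \equiv 0 \pmod{p^{\lfloor k/2\rfloor}}
\]
(with the usual extra care at the half-level when $k$ is odd, or for $p=2$). This gives
\[
    |S| \le p^{\lceil k/2\rceil}\,\#\{y \bmod p^{\lfloor k/2\rfloor} : m(y+h)^2 \equiv -n y^2\}
\]
up to the odd-$k$ Gauss sum refinement. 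The main obstacle is counting the solutions of this quadratic congruence,
\[
    (m+n)y^2 + 2mh\,y + mh^2 \equiv 0 \pmod{p^{j}}, \qquad j=\lfloor k/2\rfloor .
\]
Its content is $(m+n, 2mh, mh^2)$, which is essentially $(m+n, hm, hn)$ since $hn = h(m+n) - hm$. Let $\delta$ be the $p$-part of this content, capped at $p^j$. For odd $p$, a quadratic congruence modulo $p^j$ with content $\delta$ has $\ll \delta\cdot p^{o(1)}$ solutions once we divide through. The discriminant is $-4mnh^2$, and the repeated-root case is controlled in the standard way, giving $\ll \delta^{1/2}p^{j/2}$ solutions in general. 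Combining this with the prefactor $p^{\lceil k/2\rceil}$ and handling the odd-$k$ case by evaluating the inner quadratic Gauss sum as $p^{1/2}$, I expect the total to be $p^{k/2}\min(\delta, p^{k/2})$. This is exactly $c^{1/2}(m+n,hm,hn,c^{1/2})$. I expect the degenerate cases (the discriminant divisible by high powers of $p$, where $mn\equiv0$) and $p=2$ to need the most care. There I would fall back on the trivial count $\#\{y\}\le p^j$, which is acceptable because then $\delta$ is large.
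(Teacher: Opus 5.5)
Your architecture is the same as the paper's: twisted multiplicativity, the Weil bound for $k=1$, and $p$-adic stationary phase for $k\ge 2$. Your $k=1$ case is fine once you settle that the local factor is $p^{1/2}$ whenever $p\mid(m+n,hm,hn)$. The gap is in counting the stationary points for $k\ge 2$, which is the step that carries the whole lemma.

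First, the claim that a quadratic congruence modulo $p^j$ with content $\delta$ has $\ll\delta p^{o(1)}$ roots is false (take $y^2\equiv 0$). Your replacement bound $\ll\delta^{1/2}p^{j/2}$ only gives $|S|\ll p^{\lceil k/2\rceil+j/2}\delta^{1/2}$, not $p^{k/2}\delta$.

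Second, and more seriously, your fallback for degenerate discriminants (use the trivial count ``because then $\delta$ is large'') fails in exactly those cases.
\begin{itemize}
\item If $p\nmid m$ and $p^j\mid n$, then $p\nmid m+n$, so $\delta=1$ and the target is $p^{k/2}$. Yet $m(y+h)^2+ny^2\equiv 0\pmod{p^j}$ has the $p^{\lfloor j/2\rfloor}$ roots $y\equiv -h\pmod{p^{\lceil j/2\rceil}}$.
\item If $p\nmid mn(m+n)$ and $p^{\lceil j/2\rceil}\mid h$, then again $\delta=1$. Since $((m+n)y+mh)^2\equiv -mnh^2\equiv 0$, there are $p^{\lfloor j/2\rfloor}$ roots $y\equiv -mh\overline{(m+n)}\pmod{p^{\lceil j/2\rceil}}$.
\end{itemize}
In both cases, counting roots of the polynomial congruence as you propose gives at best $p^{\lceil k/2\rceil+\lfloor j/2\rfloor}$. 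This exceeds $p^{k/2}$ once $k\ge 4$.

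The missing idea is that every one of these spurious roots has $p\mid y(y+h)$. They are therefore excluded by the summation condition $(x(x+h),c)=1$, which you never use in the count. The paper exploits this as follows.
\begin{itemize}
\item It pulls out $(m,n,c)$, writes $m',n'$ for the quotients, and arranges $p\nmid m'$.
\item The condition $p\nmid x(x+h)$ then forces $p\nmid n'$; otherwise there are no stationary points.
\item It divides the congruence by $d=(m'+n',2h,p^\ell)$.
\item If $(m'+n')/d$ is a unit, completing the square together with $p\nmid x$ forces $p\nmid h/d$. The discriminant is then a unit, and Hensel gives at most $2$ roots ($4$ if $p=2$, with an extra parity case).
\item If instead $2h/d$ is a unit, there is at most one root.
\end{itemize}
This gives $|S(m',n',h;p^{2\ell})|\le 4p^\ell d$.

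The same oversight affects your odd-$k$ step. At a stationary point with $p\nmid mn$, the quadratic coefficient $m\overline{y}^3+n\overline{(y+h)}^3$ of the inner Gauss sum vanishes modulo $p$ precisely when $p\mid(h,m+n)$. There the inner sum is not $O(p^{1/2})$. The trivial count does not rescue you either, since $\delta$ may be only $p$. The paper handles this in two cases:
\begin{itemize}
\item For $p\nmid h$, it checks that this coefficient is $\equiv -n'h\overline{x(x+h)^3}\not\equiv 0$.
\item For $p\mid(m'+n',h)$, it drops the $z^2$-term and averages over lifts of $x$ to modulus $p^{\ell+1}$. It then counts roots of $m'\overline{x}^2+n'\overline{(x+h)}^2\equiv 0$ modulo $p^{\ell+1}$, obtaining $\ll p^\ell(m'+n',h,p^{\ell+1})$.
\end{itemize}
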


As an immediate corollary we see that
\begin{equation}\label{coro}
\sum_{\nu \, (\text{mod } c)}  S(m, \nu, c)S(n, \nu, c)\, e\Big(\frac{\nu h}{c}\Big) \ll c^{3/2+o(1)} (m-n, hm, hn, c^{1/2})
\end{equation}
which follows simply by opening the two Kloosterman sums. 

Before we start with the proof we recall a bound for exponential sums in one variable: if $R = P/Q \in \mathbb{F}_p(x)$ is a rational function not of the form $R = h^p - h$ for some $h \in \overline{\mathbb{F}}_p(x)$ and $\deg P \leq \deg Q$, then
\begin{equation}\label{expsum}
\Big|\sum_{\substack{x\, (\text{mod } p)\\ Q(x) \not = 0}} e\Big(\frac{R(x)}{p}\Big)\Big| \leq (2\deg Q -2)\sqrt{p} + 1;
\end{equation}
see e.g.\ \cite[Theorem 2]{moreno1991exponential}. 

\begin{proof} Let $\delta = (m, n, c)$ with $m = \delta m'$, $n = \delta n'$, $c = \delta c'$, then we have
\begin{equation}\label{delta}
|S(m, n, h, c)| \leq  \delta|S (m', n', h, c')|.
\end{equation}
 It   suffices to bound $S (m', n', h, c')$ for $c' = p^k$ a prime power, and without loss of generality we may then assume $p \nmid m'$. 

Suppose that $k = 1$. If $p \mid n'$, then $S(m', n', h; p)  = r_{m'}(p) - e(-m'\bar{h}/p) = O(1)$. If $p \mid h$, then $|S(m', n', h; p)| = |r_{m'+n'}(p)| \leq (m'+n', p).$ Finally, if $p \nmid hn$, then with the rational function $m/x + n/(x+h) \in \mathbb{F}_p(x)$ we have   $|S(m', n', h; p)| \leq 2\sqrt{p} + 1$ by  \eqref{expsum}. 
Thus in total we obtain 
\begin{equation}\label{k1}
   S(m', n', h, p) \ll \sqrt{p} (m'+n', h, p)^{1/2}.
\end{equation}

Now suppose that $k  = 2\ell \geq 2$ is even. Then by a standard stationary phase argument \cite[Lemma 12.2]{iwaniec2004analytic}, we have
$$|S(m', n', h; p^{k}) | \leq p^{\ell} \#\{ x\, (\text{mod } p^{\ell}) \mid m'\bar{x}^2 + n'\overline{(x+h)^2} \equiv 0 \, (\text{mod } p^{\ell})\}.$$ 
Clearly we must have $(n', p) = 1$, and we obtain a quadratic congruence
$$x^2 (m' + n') + 2 x h m' + h^2m' \equiv 0 \, (\text{mod } p^{\ell})$$
where $p \nmid x(x+h)$. Let $d = (m' + n', 2h, p^{\ell})$, then the congruence is equivalent to
\begin{equation}\label{cong}
x^2 \frac{m' + n'}{d} +   x \frac{2h}{d} m' + \frac{h^2}{d}m' \equiv 0 \, \Big(\text{mod } \frac{p^{\ell}}{d}\Big)
\end{equation}
where at least one of $(m' + n')/d$ and $h/d$ is coprime to $p$ and necessarily $d 
\mid h^2$. 
Without loss of generality assume that $d < p^{\ell}$, otherwise there is nothing to do.

Suppose first that $(m' + n')/d$ is coprime to $p$. If $p=2$, assume in addition that $2h/d$ is even, so $h/d$ is integral. 
 Then completing the square we get
$$\Big(x + \frac{h}{d} m' \overline{\frac{m'+n'}{d}}\Big)^2 + \Big(\frac{h}{d}\Big)^2 m'n'  \Big(\overline{\frac{m'+n'}{d}}\Big)^2 \equiv 0 \, \Big(\text{mod } \frac{p^{\ell}}{d}\Big).$$ 
Since $p \nmid x$, this is only possible if $p \nmid h/d$, and then by Hensel's lemma, there are at most 2 solutions if $p$ is odd and at most 4 solutions if $p= 2$. 

Suppose next that $2h/d$ is coprime to $p$, but $(m' + n')/d$ is not, and suppose that $x, x'$ are two solutions to \eqref{cong}. Then we obtain
$$(x  - x') \Big( \frac{m' + n'}{d} (x + x') + \frac{2h}{d} m' \Big) \equiv 0 \, \Big(\text{mod }  \frac{p^{\ell}}{d}\Big).$$
The second factor is a unit modulo $p$, hence $x \equiv x'$ (mod $p^{\ell}/d$), and there is at most one solution.

Finally, if $p = 2$, there is the case when both $2h/d$ and $(m'+n')/d$ are odd. In this case, the derivative of the quadratic function on the left hand side of \eqref{cong} is always odd, hence by Hensel's lemma every solution modulo 2 (of which there are at most 2) lifts uniquely.

 In total we deduce that \eqref{cong} has at most $4$ solutions with $p \nmid x$, and hence
\begin{equation}\label{keven}
|S(m', n', h; p^{k}) |\leq 4p^{\ell} d = 4p^{\ell}(m' + n', 2h, p^{\ell}).
\end{equation}

Finally suppose that $k  = 2\ell + 1 \geq 3$ is odd. Then again by a stationary phase argument \cite[Lemma 12.3]{iwaniec2004analytic} we have
\begin{equation}\label{koddprelim}
|S(m', n', h; p^{k}) | \leq \sum_{x\, (\text{mod } p^{\ell})} \Big|\sum_{z \, (\text{mod } p^{\ell+1})} e\Big(-z \frac{ m'\bar{x}^2 + n'\overline{(x+h)^2} }{p^{\ell+1}} +  z^2 \frac{ (m'\bar{x}^3 + n'\overline{(x+h)^3} )} {p}\Big)\Big|.
\end{equation}
This vanishes unless $m'\bar{x}^2 + n'\overline{(x+h)^2} \equiv 0 \, (\text{mod } p^{\ell})$. Hence again we must have $p \nmid n'$, and if $p \mid h$, then necessarily $p\mid m' + n'$. 

Let us first assume $p \nmid h$. Then $|S(m', n', h; p^{k}) |$ is bounded by
$$   p^{\ell} \sum_{\substack{x\, (\text{mod } p^{\ell})\\ m'\bar{x}^2 + n'\overline{(x+h)^2} \equiv 0 \, (\text{mod } p^{\ell})}} \Big|\sum_{z \, (\text{mod } p )} e\Big(-z \frac{ (m'\bar{x}^2 + n'\overline{(x+h)^2} )/p^{\ell}}{p} +  z^2 \frac{ (m'\bar{x}^3 + n'\overline{(x+h)^3} )} {p}\Big)\Big|.$$
Under our current assumption we have 
$$m'\bar{x}^3 + n'\overline{(x+h)^3} \equiv (m'\bar{x}^2 + n'\overline{(x+h)^2} )\bar{x} - n' h \overline{x(x+h)^3} \equiv  - n' h \overline{x(x+h)^3}  \not\equiv 0 \, (\text{mod } p),$$
so that the $z$-sum is $O(p^{1/2})$ and the complete contribution is $O(p^{\ell + 1/2})$. 

Let us now assume that $p \mid (m'+n', h)$. Then the $z^2$-term in \eqref{koddprelim} disappears, and we enlarge the $x$-sum artificially to a sum modulo $p^{\ell+1}$. This gives us
\begin{displaymath}
    \begin{split}
&\frac{1}{p} \sum_{x \, (\text{mod } p^{\ell+1})} \Big|\sum_{z \, (\text{mod } p^{\ell+1})} e\Big(-z \frac{ m'\bar{x}^2 + n'\overline{(x+h)^2} }{p^{\ell+1}} \Big)\Big| =  p^{\ell} \sum_{\substack{x\, (\text{mod } p^{\ell+1})\\ m'\bar{x}^2 + n'\overline{(x+h)^2} \equiv 0 \, (\text{mod } p^{\ell+1})}}1\\
&\ll p^{\ell}(m' + n', h, p^{\ell+1}).
\end{split}
\end{displaymath}


Combining this analysis with \eqref{k1} and \eqref{keven}, 
we conclude that  in all cases we obtain
$$
S(m', n', h; p^{k})  \ll p^{k/2} (m' + n', h, p^{k/2}).
$$
Substituting back into \eqref{delta}, we obtain
$$S(m, n, h, p^r) \ll  \delta  \Big(\frac{p^r}{\delta} \Big)^{1/2} \Big(\frac{m + n}{\delta}, h,  \Big(\frac{p^r}{\delta} \Big) \Big)^{1/2} \ll p^{r/2} (m + n, hm, hn, p^{r/2}),$$
which generalizes   to all  moduli by the Chinese remainder theorem. 
\end{proof}
 
The previous lemma is the key input for the  following generalization of \cite[Theorem 1.17]{fouvry2014algebraic} to arbitrary moduli, which may be of independent interest.

\begin{theorem}\label{polyaV}
    Let $c  \in \Z_+$, $M, N \in \Z $, and $\mI, \mJ \subset \Z$ be intervals with $|\mI| = M$, $|\mJ| = N$. Then for any complex sequences $(\alpha_m)_{m \in \mI}$, $(\beta_n)_{n \in \mJ}$ and any $a \in (\Z/c\Z)^\times$, one has
\begin{equation*} \label{eq:polyaV}
    \mathop{\sum\sum}_{\substack{m \in \mI, n \in \mJ \\ (m, c) = 1
    }} \alpha_m \beta_n S(am, n; c)
    \ll \|\alpha\| \|\beta\| c^{1+o(1)}\Big( \frac{(MN)^{1/2}}{c^{3/4}} + \frac{N^{1/2}}{c^{1/2}} + \frac{M^{1/2}}{c^{1/4}}\Big). 
\end{equation*}
\end{theorem}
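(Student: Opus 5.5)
We follow the Pólya--Vinogradov-type argument of \cite[Theorem 1.17]{fouvry2014algebraic}: apply Cauchy--Schwarz in $m$, complete the $m$-sum, and then use \cref{coro} (that is, \cref{lem-expsum}) for the resulting correlation sums. By Cauchy--Schwarz,
\[
\Big|\sum_{\substack{m\in\mI\\(m,c)=1}}\sum_{n\in\mJ}\alpha_m\beta_n S(am,n;c)\Big|^2 \le \|\alpha\|^2 \sum_{n_1,n_2\in\mJ}\beta_{n_1}\bar\beta_{n_2}\sum_{\substack{m\in\mI\\(m,c)=1}} S(am,n_1;c)\bar{S(am,n_2;c)}.
\]
Since $S(am,n;c)=S(n,am;c)$ and $S$ is real, the inner sum is $\sum_{m\in\mI,(m,c)=1}S(n_1,am;c)S(n_2,am;c)$.

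Next we complete the $m$-sum modulo $c$. We write $\one_{\mI}(m)=\frac1c\sum_{h \,(\mathrm{mod}\ c)}\hat{\one}_{\mI}(h)e(hm/c)$ with $\sum_h|\hat{\one}_{\mI}(h)|\ll M+c\log c$, and more precisely $|\hat{\one}_{\mI}(h)|\ll\min(M,c/\|h\|)$. The condition $(m,c)=1$ is harmless: substituting $\nu=am$, the complete sum $\sum_{\nu\,(\mathrm{mod}\ c)}S(n_1,\nu;c)S(n_2,\nu;c)e(\bar a h\nu/c)$ is exactly the left side of \cref{coro} with $h\gets\bar a h$. If we want to enforce $(\nu,c)=1$ instead of summing over all $\nu$, we use Möbius inversion over $\delta\mid(\nu,c)$; this costs only $c^{o(1)}$ after rerunning \cref{lem-expsum}. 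Hence the inner sum is
\[
\ll \frac{c^{o(1)}}{c}\sum_{h}\min\Big(M,\frac{c}{\|h\|}\Big)c^{3/2}\,(n_1-n_2,hn_1,hn_2,c^{1/2}).
\]

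We then split by the size of the gcd factor. The $h=0$ term gives $Mc^{1/2}(n_1-n_2,c^{1/2})$. Summing over $n_1,n_2$ with $|\beta_{n_1}\beta_{n_2}|\le\frac12(|\beta_{n_1}|^2+|\beta_{n_2}|^2)$, the gcd contributes on average $\sum_{n_2}(n_1-n_2,c^{1/2})\ll c^{o(1)}(N+c^{1/2})$. This gives $\|\beta\|^2Mc^{1/2}(N+c^{1/2})$, whose square roots yield the terms $(MN)^{1/2}c^{1/4}$ and $M^{1/2}c^{1/2}$, matching $c\cdot(MN)^{1/2}c^{-3/4}$ and $c\cdot M^{1/2}c^{-1/4}$. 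For $h\neq0$, we bound $(\cdot)\le(h,c^{1/2})$ and use $\sum_{h\ne0}\frac{c}{|h|}(h,c^{1/2})\ll c^{1+o(1)}$, so these terms contribute $c^{3/2+o(1)}$ per pair $(n_1,n_2)$. Combined naively with $\sum|\beta_{n_1}\beta_{n_2}|\le N\|\beta\|^2$, this gives $Nc^{3/2}$, i.e.\ $c\cdot N^{1/2}c^{-1/4}$, which is weaker than the claimed $N^{1/2}c^{1/2}$.

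To reach the claimed term, the order of variables must be chosen so that the dual sum is efficient. Applying Cauchy in $n$ (smoothing over the $n$-interval of length $N$) and completing in $n$ produces the term $c\cdot N^{1/2}c^{-1/2}$ from the $h\ne0$ frequencies. This works because $\min(N,c/|h|)$ summed against $(\cdot,c^{1/2})$ after averaging over $m_1-m_2$ saves the extra $c^{1/2}$. The mixed gcd $(m_1-m_2,hm_1,hm_2,c^{1/2})$ is averaged over the differences, giving $c^{o(1)}(1+\text{small})$. We therefore apply Cauchy in whichever variable makes the three terms appear, and use the symmetry $S(am,n)=S(n,am)$.

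The main obstacle is the bookkeeping of the gcd factor $(m-n,hm,hn,c^{1/2})$. We must show that on average over pairs and over $h$ it costs only $c^{o(1)}$, apart from the diagonal $h=0$ contribution that produces the $(N+c^{1/2})$ term. We handle this by splitting $h$ dyadically and according to the divisor $\delta=(h,c^{1/2})$, and bounding the number of pairs with $\delta'\mid n_1-n_2$ by $N(1+N/\delta')$.
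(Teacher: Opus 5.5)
There is a genuine gap. The only route you actually carry out is Cauchy--Schwarz in $m$ followed by completion of the $m$-sum, and it cannot give the stated bound. As you notice yourself, its nonzero frequencies cost $c^{3/2+o(1)}$ per pair $(n_1,n_2)$, hence $N^{1/2}c^{3/4}$. So at best it proves the statement with $M$ and $N$ interchanged.

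That route also has two unjustified steps:
\begin{itemize}
\item The completed variable $\nu=am$ carries the restriction $(\nu,c)=1$, so \eqref{coro} does not apply as stated.
\item The bound $(n_1-n_2,hn_1,hn_2,c^{1/2})\le(h,c^{1/2})$ is false when the $n_i$ share factors with $c$ (take $c=p^2$, $n_1=n_2=p$, $h=1$).
\end{itemize}
Your fallback, Cauchy in $n$ and completion of the $n$-sum, is the paper's route. However, you only assert it, and your description of it is wrong: the term $N^{1/2}c^{1/2}$ does not come from the frequencies $h\neq0$, and no extra saving from ``averaging over $m_1-m_2$'' at $h\neq 0$ is involved.

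The missing idea is that in this ordering the \emph{outer} variables satisfy $(m_1m_2,c)=1$, so the gcd in \eqref{coro} collapses to $(m_1-m_2,h,c^{1/2})$. This is precisely where the hypothesis $(m,c)=1$ is used. Moreover, the completed variable $n$ is unrestricted, so no M\"obius inversion arises. The square of the bilinear form is then
\[
\ll\|\beta\|^2\sum_{m_1,m_2\in\mI}|\alpha_{m_1}\alpha_{m_2}|\sum_{|h|\le c/2}\min\Big(\frac{1}{|h|},\frac{N}{c}\Big)c^{3/2+o(1)}(m_1-m_2,h,c^{1/2}).
\]
The two frequency ranges give the three terms as follows.
\begin{itemize}
\item The frequency $h=0$ gives $Nc^{1/2+o(1)}\sum_{m_1,m_2}|\alpha_{m_1}\alpha_{m_2}|(m_1-m_2,c^{1/2})\ll c^{o(1)}\|\alpha\|^2Nc^{1/2}(M+c^{1/2})$. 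To see this, group according to $d=(m_1-m_2,c)$ and use $\sum_{d\mid m_1-m_2}|\alpha_{m_1}\alpha_{m_2}|\le\|\alpha\|^2(1+M/d)$. This is an $\ell^2$ bound, not a count of pairs as in your last paragraph. It produces $(MN)^{1/2}c^{1/4}$ and $N^{1/2}c^{1/2}$.
\item The frequencies $h\neq0$ use $(m_1-m_2,h,c^{1/2})\le(h,c^{1/2})$ and $\sum_{h\neq0}|h|^{-1}(h,c^{1/2})\ll c^{o(1)}$. They contribute at most $c^{3/2+o(1)}\big(\sum_m|\alpha_m|\big)^2\le c^{3/2+o(1)}M\|\alpha\|^2$, which is the term $M^{1/2}c^{3/4}$.
\end{itemize}
Taking square roots yields exactly the three terms of the theorem. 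The paper packages the same computation as a sum over $d\mid c$ with $d\mid(m_1-m_2,h)$.
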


\begin{proof}
By Cauchy's inequality, the sum in question is bounded by 
$$\| \beta \| \Big(\sum_{\substack{m_1, m_2 \in \mI \\ (m_1m_2, c) = 1}} \alpha_{m_1} \overline{\alpha_{m_2}} \sum_{n \in \mJ} S(am_1, n, c) S(am_2, n, c)\Big)^{1/2}.$$
By completing the $n$-sum (see \cite[(12.11) -- (12.13)]{iwaniec2004analytic}), this is bounded by
$$\| \beta \| \Big(\sum_{\substack{m_1, m_2 \in \mI\\ (m_1m_2, c) = 1}} |\alpha_{m_1} \overline{\alpha_{m_2}}|  \sum_{0 \leq |h| \leq c/2}  \min\Big(\frac{1}{|h|}, \frac{N}{c}\Big)|S_{m_1, m_2}(h)|  \Big)^{1/2}$$
with $$ S_{m_1, m_2}(h) =  \sum_{\nu \, (\text{mod } c)}  S(m_1, \nu, c)S(m_2, \nu, c) e\Big(\frac{\nu h}{c}\Big).$$
Using \eqref{coro}, we obtain
\begin{displaymath}
\begin{split}
&\| \beta \| \Big(\sum_{\substack{m_1, m_2 \in \mI\\ (m_1m_2, c) = 1 }} |\alpha_{m_1} \overline{\alpha_{m_2}}|  \sum_{0 \leq |h| \leq c/2}  \min\Big(\frac{1}{|h|}, \frac{N}{c}\Big)c^{3/2+ o(1)} (m_1 - m_2, h, 
c^{1/2})  \Big)^{1/2}\\
\ll &\| \beta \| \Big(\sum_{\substack{d \mid c }} (d, c^{1/2})  \sum_{\substack{m_1, m_2 \in \mI\\ d \mid m_1 - m_2}} |\alpha_{m_1} \overline{\alpha_{m_2}}|  \sum_{ 0 \leq |h| \ll c
/d  } \min\Big(\frac{1}{d|h| 
}, \frac{N}{c}\Big)c^{3/2+ o(1)}   \Big)^{1/2}\\
\ll &\| \beta \| \Big(\sum_{\substack{d \mid c }} (d, c^{1/2})  \sum_{\substack{m_1, m_2 \in \mI\\ d \mid m_1 - m_2}} |\alpha_{m_1} \overline{\alpha_{m_2}}|   Nc^{1/2 + o(1)} \Big(1 + \frac{c
}{Nd}  \Big)\Big)^{1/2}\\
\ll &\| \beta \| \Big(\sum_{\substack{d \mid c  }} (d, c^{1/2})   
\| \alpha \|^2 \Big(1 + \frac{M}{d
}\Big)   Nc^{1/2 + o(1)} \Big(1 + \frac{c
}{Nd}  \Big)\Big)^{1/2}\\
\end{split}
\end{displaymath}
and the claim follows.
\end{proof}

\begin{remark}\label{largesize}
   \cref{thm:bilinear-forms-4th-moment,thm:non-ab-result,thm:bilinear-forms-general} as well as \cref{thm:bilinear-forms-balanced-lengths} contain the condition $N, M \leq c$. If $N$ is larger than $c$, we observe that the Kloosterman sum depends only on $n$ modulo $c$, hence we can split the interval $\mJ$ into $(1 + N/c)$ subintervals $\mJ_i$ of length at most $c$ and use the Cauchy-Schwarz inequality to deduce $$\Big\| \sum_i \beta|_{\mJ_i} \Big\| \leq \Big(1 + \frac{N}{c}\Big)^{1/2} \| \beta \|. $$  
   In other words, we can drop the condition $N \leq c$ at the cost of multiplying all bounds by $(1 + N/c)^{1/2}$. The same reasoning applies for the $m$-variable.
\end{remark}

\section{Moments of twisted \texorpdfstring{$L$}{L}-functions}\label{sec6}


In this section we prove \cref{secondmoment}. The proof follows well-known steps, and we refer to \cite{blomer2015second, blomer2017moments, milicevic2025bilinear} for details. Our task here is mainly to collect the various auxiliary bounds and optimize them. 

The key input is to bound 
$$\mathcal{B}^{\pm}(M, N) = \frac{1}{\phi^{\ast}(q)} \sum_{d\mid q} \mu\Big(\frac{q}{d}\Big) \frac{\phi(d)}{(MN)^{1/2}} \sum_{\substack{m \equiv \pm n\, (\text{mod } d)\\ (mn, q) = 1\\ m \not = n}} \lambda_1(m) \lambda_2(n) W_1\Big(\frac{m}{M}\Big) W_2\Big(\frac{n}{N}\Big)$$
for two smooth and compactly supported functions $W_1, W_2$ and 
\begin{equation}\label{ranges}
N \geq M, \quad MN \leq q^{2+\varepsilon}.
\end{equation}
As usual, we  write $\theta = 7/64$ for an admissible exponent towards the Ramanujan-Petersson conjecture. We have the trivial bound
\begin{equation}\label{bound1}
    \mathcal{B}^{\pm}(M, N) \ll q^{o(1)}N^{\theta} \frac{(MN)^{1/2}}{q}.
\end{equation}
We also have the bound from shifted convolution sums (\cite[Lemma 7.1]{milicevic2025bilinear}, \cite[Theorem 3.2]{blomer2015second}):
\begin{equation}\label{bound2}
    \mathcal{B}^{\pm}(M, N) \ll q^{o(1)}\Big(\Big( \frac{N}{M}\Big)^{1/4} q^{-1/4} + \Big( \frac{N}{M}\Big)^{1/2} q^{-1/2}+ q^{-1/2 + 2\theta}\Big).
\end{equation}
Finally, by the discussion in \cite[(7.18) -- (7.19)]{milicevic2025bilinear} we have
\[
\mathcal{B}^{\pm}(M, N) \ll q^{o(1)}\sum_{r \mid q} \sum_{f \mid g \mid \frac{q}{(q, r^{\infty})}} \frac{f^{\theta}}{q (fgMN^{\ast})^{1/2}} \Big|\sum_{\substack{m \asymp M\\ (m, q) = 1 }} \sum_{n \asymp N^{\ast}} \alpha_m \beta_n S(\pm \overline{fg}n, m, r)\Big| 
\]
where $\|\alpha \| \ll M^{1/2}$, $\| \beta \| \ll (N^*)^{1/2}$ and $N^{\ast} \ll fgr^2/N$.  

For the inner sum, we will use both \cref{thm:bilinear-forms-general} and \cref{polyaV}, the former together with Remark \ref{largesize}. 
In this way we obtain 
\begin{displaymath}
    \begin{split}
\mathcal{B}^{\pm}&(M, N) \ll q^{o(1)}\sum_{r \mid q} \sum_{f \mid g \mid \frac{q}{r}} \frac{f^{\theta}r}{q (fg)^{1/2}}  \min\Bigg(  \frac{(MN^{\ast})^{1/2}}{r^{3/4}} + \frac{(N^{\ast})^{1/2}}{r^{1/2}} + \frac{M^{1/2}}{r^{1/4}}, \\&\Big(1 + \frac{M^{1/2}}{r^{1/2}}\Big) \Big(1 + \frac{(N^{\ast})^{1/2}}{r^{1/2}}\Big)\Big[\frac{M^{1/8} ((r+MN^{\ast})(r+(N^{\ast})^2))^{1/16}}{r^{1/4}} \min\Big(\frac{r}{M}, r^{1/2}\Big)^{1/16} 
    \\
    &\quad\quad\quad\quad +\Big(\frac{(N^{\ast})^2}{r^2} + \frac{(N^{\ast})^{1/2}M  (r+(N^{\ast})^2) }{r^{5/2}}\Big)^{1/16}+
    \frac{M^{1/3} + (N^{\ast})^{1/3}}{r^{1/5}}
   \\& \quad\quad\quad\quad +
    \frac{M^{1/2}(N^{\ast})^{1/6} + M^{1/6}(N^{\ast})^{1/2}}{r^{7/18}}
    +
    \frac{M^{1/15} + (N^{\ast})^{1/15}}{r^{1/15}} \Big]\Bigg).
\end{split}
\end{displaymath} 
The expression is obviously increasing in $N^{\ast}$, so we may substitute $N^{\ast} = fgr^2/N \leq q^2/N$. We substitute $q^2/N$ in all cases except in the second factor in the second line where we write
$$ 1 + \frac{(N^{\ast})^{1/2}}{r^{1/2}} \leq 1 + \frac{(fgr^2/N)^{1/2 - \theta}(q^2/N)^{\theta}}{r^{1/2}} = 1 + \frac{(f g)^{1/2 - \theta} r^{1/2 - 2\theta}q^{2\theta}}{N^{1/2}}.$$
In the resulting expression we may substitute $f = g = 1$ by a divisor bound. Moreover the smallest power of $r$ is $r^{1 - 3/4}$ respectively $r^{1   - 1/2- 7/18}$ which in both cases is a positive $r$-power. Hence the expression is increasing in $r$ and we may substitute $r = q$, using again a divisor bound. 
In this way we obtain 
 \begin{equation}\label{bound3}
    \begin{split}
&\mathcal{B}^{\pm}(M, N) \ll q^{o(1)}    \min\Bigg(  \frac{M^{1/2}q^{1/4}}{N^{1/2}} + \frac{q^{1/2}}{N^{1/2}} + \frac{M^{1/2}}{q^{1/4}},  \Big(1 + \frac{M^{1/2}}{q^{1/2}}\Big)\Big(1 + \frac{q^{1/2}}{N^{1/2}}\Big) \\
&   \Big[\Big( \frac{M^2}{q^2} + \frac{M^3}{Nq} + \frac{M^2 q}{N^2} + \frac{M^3 q^2}{N^3}\Big)^{1/16}
\min\Big(\frac{q}{M}, q^{1/2}\Big)^{1/16} + \frac{q^{1/8}}{N^{1/8}} + \frac{M^{1/16}}{N^{1/32}q^{1/32}}  
    \\
    & 
 + \frac{q^{5/32} M^{1/16}}{N^{5/32}}  + 
    \frac{M^{1/3}}{q^{1/5}} + \frac{q^{7/15} }{N^{1/3}}
    +\frac{M^{1/2}}{q^{1/18}N^{1/6}} + \frac{M^{1/6}q^{11/18} }{N^{1/2}} + \frac{M^{1/15}}{q^{1/15}} + \frac{q^{1/15}}{N^{1/15}}
      \Big]  \Bigg).
\end{split}
\end{equation} 
  Combining the bounds \eqref{bound1}, \eqref{bound2}, \eqref{bound3} under the size constraint \eqref{ranges} leads to a linear optimization problem, which is performed most quickly  by a computer algebra system, such as {\tt mathematica}: 

 { \tt In[1] := Maximize[\{Min[7/64 n + 1/2 m + 1/2 n - 1, 
   Max[n/4 - m/4 - 1/4,\\  n/2 - m/2 - 1/2, -1/2 + 7/32], 
   Max[m/2 + 1/4 - n/2, 1/2 - n/2, m/2 - 1/4], \\Max[0, 1/2 - n/2] + Max[0, m/2 - 1/2] +  
   Max[1/16 Max[2m-2, 3m - n - 1, \\ 2m + 1 - 2n, 3m + 2 - 3n] + 1/16 Min[1-m, 1/2],  1/8 - n/8,   m/16 - n/32 - 1/32,\\ 5/32 - m/16 - 5n/32,   m/3 - 1/5,  7/15 - n/3, m/2 - 1/18 - n/6, 
    m/6 + 11/18 - n/2,\\ m/15 - 1/15,  1/15 - n/15]],  n >= m, m >= 0, 
  n >= 0, m + n <= 2\}, \{m, n\}] }

{\tt Out[1] := \{-1/90, \{m -> 43/90, n -> 43/30\}\}}

confirming the saving of $q^{-1/90+\varepsilon}$. This completes the proof of \cref{secondmoment}.


\section{The large sieve for exceptional Maa{\ss} forms} \label{sec:large-sieve}

In this section we prove \cref{thm:large-sieve}. We refer to \cite{deshouillers1982kloosterman,iwaniec1997topics,iwaniec2021spectral} for background on the spectral theory of automorphic forms. We are mainly interested in the Maa{\ss} cusp forms $f : \Gamma_0(q)\backslash \H \to \C$ of growing level $q \in \Z_+$ and bounded Laplacian eigenvalues. 
Following the original normalization of Deshouillers--Iwaniec \cite{deshouillers1982kloosterman}, we write the Fourier expansion of such a Maa{\ss} cusp form $f$ around a cusp $\ma$ of $\Gamma_0(q)\backslash \H$ as
\begin{equation}\label{eq:Maass-Fourier-expansion}
    f(\sigma_{\ma} z) = y^{1/2} \sum_{n \neq 0} \rho_{\ma}(n) K_{i\kappa}(2\pi |n| y)\, e(nx),
    \qquad\qquad 
    z = x + iy \in \H,
\end{equation}
where $\sigma_{\ma} \in \PSL_2(\R)$ is a scaling matrix for $\ma$ (satisfying in particular $\sigma_{\ma} \infty = \ma$), and $K$ is the Whittaker function as in \cite[p.\ 264]{deshouillers1982kloosterman}. We also recall \cref{not:orthonormal-basis} for an orthonormal basis of the discrete Maass spectrum.

The proof of \cref{thm:large-sieve} is very similar to, and in fact a bit simpler than, the proof of \cite[Theorem 9.4]{pascadi2025nonabelian}; the only new ingredient is our \cref{thm:bilinear-forms-balanced-lengths}. Note that for the choice of $X$ in \cref{eq:X-choice}, at least one of the following must be true:
\begin{itemize}
    \item[$(i)$] $X \le 8$,
    \item[$(ii)$] $X \ll \tfrac{q}{N} + \tfrac{q^2}{N^3}$, or
    \item[$(iii)$] $X \ll \min(\frac{q^{18/11}}{N^{23/11}}, \frac{q^{16/13}}{N^{18/13}}, \frac{q^{32/29}}{N^{33/29}})$.
\end{itemize}
If $X \le 8$, the result follows immediately from the regular-spectrum large sieve \cite[Theorem 2]{deshouillers1982kloosterman}. We further assume that $X > 8$, so one of $(ii)$ and $(iii)$ must hold. In particular, we have $\lfloor 2N \rfloor < \tfrac{NX}{4}$.

After adding the holomorphic and Eisenstein contributions and applying the Kuznetsov  formula (via \cite[Proposition 9.2]{pascadi2025nonabelian}), and separating the $m, n$ variables in the smooth weight, we obtain (identically as in \cite[p.\ 50]{pascadi2025nonabelian})
\begin{equation} \label{eq:ls-to-kloosterman}
\begin{aligned}
    \sum_{\lambda_j < 1/4}
    X^{2\theta_j} 
    \Big\vert 
    \sum_{n \sim N} \alpha_n\, \rho_{j\ma}(n)
    \Big\vert^2 
    &\ll 
    \frac{1}{NX}
    \sum_{\substack{NX/4 \le c \le NX \\ c \equiv 0 \pmod{q}}}
    \sup_{\substack{(\alpha_n')_{n \sim N},\ (\alpha_n'')_{n \sim N} \\ |\alpha'_n| = |\alpha''_n| = |\alpha_n|}}
    \Big\vert \sum_{m \sim N} \sum_{n \sim N} \alpha'_m \alpha''_n S(m, n; c) \Big\vert
    \\ 
    &+
    (qN)^{o(1)}
    \Big(1 + \frac{N}{q}\Big) \|\alpha\|^2,
\end{aligned}
\end{equation}
where the supremum is over all complex sequences $(\alpha'_n)_{n \sim N}$, $(\alpha''_n)_{n \sim N}$ with the same sequence of absolute values as $(\alpha_n)_{n \sim N}$. 

We first apply \cref{lem:bilinear-forms-trivial-bound} with $\mI = \mJ = \{1, \ldots, \lfloor 2N \rfloor\}$ to obtain
\[
    \sum_{m \sim N} \sum_{n \sim N} \alpha'_m \alpha''_n S(m, n; c) 
    \ll 
    (qN)^{o(1)} \|\alpha\|^2 \min(NX, N^{3/2} X^{1/2}).
\]
This gives an acceptable contribution to \cref{eq:ls-to-kloosterman} provided that
\[
    \frac{1}{q}\min(NX, N^{3/2} X^{1/2}) \ll 1
    \qquad 
    \iff 
    \qquad 
    X \ll \frac{q}{N} + \frac{q^2}{N^3},
\]
which covers case $(ii)$.

Finally, by \cref{thm:bilinear-forms-balanced-lengths} with $\mI = \mJ = \{1, \ldots, \lfloor 2N \rfloor\}$, we have
\[
    \sum_{m \sim N} \sum_{n \sim N} \alpha'_m \alpha''_n S(m, n; c) 
    \ll 
    (qN)^{o(1)}
    \|\alpha\|^2 NX \Big(\frac{N^{1/32}}{X^{3/32}} + \frac{N^{1/8}}{X^{3/16}} + \frac{N^{5/18}}{X^{7/18}}\Big).
\]
This gives an acceptable contribution to \cref{eq:ls-to-kloosterman} provided that
\[
    \frac{NX}{q} \Big(\frac{N^{1/32}}{X^{3/32}} + \frac{N^{1/8}}{X^{3/16}} + \frac{N^{5/18}}{X^{7/18}}\Big) \ll 1 
    \qquad 
    \iff 
    \qquad 
    X \ll 
    \min\Big(\frac{q^{32/29}}{N^{33/29}},
    \frac{q^{16/13}}{N^{18/13}},
    \frac{q^{18/11}}{N^{23/11}}
    \Big),
\]
which covers case $(iii)$. This completes our proof of \cref{thm:large-sieve}.\\

\textbf{Acknowledgement:} ChatGPT Pro was used to check for errors in an earlier version of this manuscript.

\bibliographystyle{plain}
\bibliography{main}

@article {moreno1991exponential,
    AUTHOR = {Moreno, Carlos J. and Moreno, Oscar},
     TITLE = {Exponential sums and {G}oppa codes. {I}},
   JOURNAL = {Proc. Amer. Math. Soc.},
  FJOURNAL = {Proceedings of the American Mathematical Society},
    VOLUME = {111},
      YEAR = {1991},
    NUMBER = {2},
     PAGES = {523--531},
      ISSN = {0002-9939,1088-6826},
   MRCLASS = {11T23 (11L40 14G15 94B40)},
  MRNUMBER = {1028291},
MRREVIEWER = {J.\ A.\ Thiong-Ly},
       DOI = {10.2307/2048345},
       URL = {https://doi.org/10.2307/2048345},
}

@article {maynard2025primes,
    AUTHOR = {Maynard, James},
     TITLE = {Primes in arithmetic progressions to large moduli {I}:
              fixed residue classes},
   JOURNAL = {Mem. Amer. Math. Soc.},
  FJOURNAL = {Memoirs of the American Mathematical Society},
    VOLUME = {306},
      YEAR = {2025},
    NUMBER = {1542},
     PAGES = {v+132},
      ISSN = {0065-9266,1947-6221},
      ISBN = {978-1-4704-7153-8; 978-1-4704-8050-9},
       DOI = {10.1090/memo/1542},
       URL = {https://doi.org/10.1090/memo/1542},
}

@article {maynard2025primes2,
    AUTHOR = {Maynard, James},
     TITLE = {Primes in arithmetic progressions to large moduli
              {II}: well-factorable estimates},
   JOURNAL = {Mem. Amer. Math. Soc.},
  FJOURNAL = {Memoirs of the American Mathematical Society},
    VOLUME = {306},
      YEAR = {2025},
    NUMBER = {1543},
     PAGES = {v+33},
      ISSN = {0065-9266,1947-6221},
      ISBN = {978-1-4704-7173-6; 978-1-4704-8051-6},
       DOI = {10.1090/memo/1543},
       URL = {https://doi.org/10.1090/memo/1543},
}

@article {maynard2025primes3,
    AUTHOR = {Maynard, James},
     TITLE = {Primes in arithmetic progressions to large moduli
              {III}: uniform residue classes},
   JOURNAL = {Mem. Amer. Math. Soc.},
  FJOURNAL = {Memoirs of the American Mathematical Society},
    VOLUME = {306},
      YEAR = {2025},
    NUMBER = {1544},
     PAGES = {v+98},
      ISSN = {0065-9266,1947-6221},
      ISBN = {978-1-4704-7154-5; 978-1-4704-8052-3},
       DOI = {10.1090/memo/1544},
       URL = {https://doi.org/10.1090/memo/1544},
}

@article {bombieri1986primes,
    AUTHOR = {Bombieri, Enrico and Friedlander, John B. and Iwaniec, Henryk},
     TITLE = {Primes in arithmetic progressions to large moduli},
   JOURNAL = {Acta Math.},
  FJOURNAL = {Acta Mathematica},
    VOLUME = {156},
      YEAR = {1986},
    NUMBER = {1},
     PAGES = {203--251},
      ISSN = {0001-5962,1871-2509},
       DOI = {10.1007/BF02399204},
       URL = {https://doi.org/10.1007/BF02399204},
}

@article {bombieri1987primes2,
    AUTHOR = {Bombieri, Enrico and Friedlander, John B. and Iwaniec, Henryk},
     TITLE = {Primes in arithmetic progressions to large moduli. {II}},
   JOURNAL = {Math. Ann.},
  FJOURNAL = {Mathematische Annalen},
    VOLUME = {277},
      YEAR = {1987},
    NUMBER = {3},
     PAGES = {361--393},
      ISSN = {0025-5831,1432-1807},
       DOI = {10.1007/BF01458321},
       URL = {https://doi.org/10.1007/BF01458321},
}

@article {bombieri1989primes3,
    AUTHOR = {Bombieri, Enrico and Friedlander, John B. and Iwaniec, Henryk},
     TITLE = {Primes in arithmetic progressions to large moduli. {III}},
   JOURNAL = {J. Amer. Math. Soc.},
  FJOURNAL = {Journal of the American Mathematical Society},
    VOLUME = {2},
      YEAR = {1989},
    NUMBER = {2},
     PAGES = {215--224},
      ISSN = {0894-0347,1088-6834},
       DOI = {10.2307/1990976},
       URL = {https://doi.org/10.2307/1990976},
}

@article {deshouillers1982kloosterman,
    AUTHOR = {Deshouillers, Jean-Marc and Iwaniec, Henryk},
     TITLE = {Kloosterman sums and {F}ourier coefficients of cusp forms},
   JOURNAL = {Invent. Math.},
  FJOURNAL = {Inventiones Mathematicae},
    VOLUME = {70},
      YEAR = {1982},
    NUMBER = {2},
     PAGES = {219--288},
      ISSN = {0020-9910,1432-1297},
       DOI = {10.1007/BF01390728},
       URL = {https://doi.org/10.1007/BF01390728},
}

@book {iwaniec2004analytic,
    AUTHOR = {Iwaniec, Henryk and Kowalski, Emmanuel},
     TITLE = {Analytic number theory},
    SERIES = {American Mathematical Society Colloquium Publications},
    VOLUME = {53},
 Publisher = {Amer. Math. Soc., Providence, RI},
      YEAR = {2004},
     PAGES = {xii+615},
      ISBN = {0-8218-3633-1},
   MRCLASS = {11-02 (11Fxx 11Lxx 11Mxx 11Nxx)},
  MRNUMBER = {2061214},
MRREVIEWER = {K.\ Soundararajan},
       DOI = {10.1090/coll/053},
       URL = {https://doi.org/10.1090/coll/053},
}

@incollection {selberg1965estimation,
    AUTHOR = {Selberg, Atle},
     TITLE = {On the estimation of {F}ourier coefficients of modular forms},
 BOOKTITLE = {Proc. {S}ympos. {P}ure {M}ath.},
    VOLUME = {8},
     PAGES = {1--15},
 Publisher = {Amer. Math. Soc., Providence, RI},
      YEAR = {1965},
}

@article {kim2003functoriality,
    AUTHOR = {Kim, Henry H.},
     TITLE = {Functoriality for the exterior square of {${\rm GL}_4$} and the symmetric fourth of {${\rm GL}_2$}},
      NOTE = {Appendix 2 by Henry H. Kim
              and Peter Sarnak},
   JOURNAL = {J. Amer. Math. Soc.},
  FJOURNAL = {Journal of the American Mathematical Society},
    VOLUME = {16},
      YEAR = {2003},
    NUMBER = {1},
     PAGES = {139--183},
      ISSN = {0894-0347,1088-6834},
       DOI = {10.1090/S0894-0347-02-00410-1},
       URL = {https://doi.org/10.1090/S0894-0347-02-00410-1},
}

@article {kuznetsov1980petersson,
    AUTHOR = {Kuznetsov, Nikolai V.},
     TITLE = {The {P}etersson conjecture for cusp forms of weight zero and the {L}innik conjecture. {S}ums of {K}loosterman sums},
   JOURNAL = {Mat. Sb. (N.S.)},
  FJOURNAL = {Matematicheski\u i\ Sbornik. Novaya Seriya},
    VOLUME = {111(153)},
      YEAR = {1980},
    NUMBER = {3},
     PAGES = {334--383},
      ISSN = {0368-8666},
}

@article {de2020niveau,
    AUTHOR = {de La Bret{\`e}che, R{\'e}gis and Drappeau, Sary},
     TITLE = {Niveau de r\'epartition des polyn\^omes quadratiques et crible majorant pour les entiers friables},
   JOURNAL = {J. Eur. Math. Soc. (JEMS)},
  FJOURNAL = {Journal of the European Mathematical Society (JEMS)},
    VOLUME = {22},
      YEAR = {2020},
    NUMBER = {5},
     PAGES = {1577--1624},
      ISSN = {1435-9855,1435-9863},
       DOI = {10.4171/jems/951},
       URL = {https://doi.org/10.4171/jems/951},
}

@article {kowalski2017bilinear,
    AUTHOR = {Kowalski, Emmanuel and Michel, Philippe and Sawin, Will},
     TITLE = {Bilinear forms with {K}loosterman sums and applications},
   JOURNAL = {Ann. of Math. (2)},
  FJOURNAL = {Annals of Mathematics. Second Series},
    VOLUME = {186},
      YEAR = {2017},
    NUMBER = {2},
     PAGES = {413--500},
      ISSN = {0003-486X,1939-8980},
       DOI = {10.4007/annals.2017.186.2.2},
       URL = {https://doi.org/10.4007/annals.2017.186.2.2},
}

@article {fouvry2014algebraic,
    AUTHOR = {Fouvry, \'Etienne and Kowalski, Emmanuel and Michel, Philippe},
     TITLE = {Algebraic trace functions over the primes},
   JOURNAL = {Duke Math. J.},
  FJOURNAL = {Duke Mathematical Journal},
    VOLUME = {163},
      YEAR = {2014},
    NUMBER = {9},
     PAGES = {1683--1736},
      ISSN = {0012-7094,1547-7398},
       DOI = {10.1215/00127094-2690587},
       URL = {https://doi.org/10.1215/00127094-2690587},
}

@article {merikoski2023largest,
    AUTHOR = {Merikoski, Jori},
     TITLE = {On the largest prime factor of {$n^2+1$}},
   JOURNAL = {J. Eur. Math. Soc. (JEMS)},
  FJOURNAL = {Journal of the European Mathematical Society (JEMS)},
    VOLUME = {25},
      YEAR = {2023},
    NUMBER = {4},
     PAGES = {1253--1284},
      ISSN = {1435-9855,1435-9863},
       DOI = {10.4171/jems/1216},
       URL = {https://doi.org/10.4171/jems/1216},
}

@article {kowalski2020stratification,
    AUTHOR = {Kowalski, Emmanuel and Michel, Philippe and Sawin, Will},
     TITLE = {Stratification and averaging for exponential sums: bilinear
              forms with generalized {K}loosterman sums},
   JOURNAL = {Ann. Sc. Norm. Super. Pisa Cl. Sci. (5)},
  FJOURNAL = {Annali della Scuola Normale Superiore di Pisa. Classe di
              Scienze. Serie V},
    VOLUME = {21},
      YEAR = {2020},
     PAGES = {1453--1530},
      ISSN = {0391-173X,2036-2145},
       DOI = {10.2422/2036-2145.201805\_002},
       URL = {https://doi.org/10.2422/2036-2145.201805_002},
}

@book {iwaniec2021spectral,
    AUTHOR = {Iwaniec, Henryk},
     TITLE = {Spectral methods of automorphic forms},
    SERIES = {Graduate Studies in Mathematics},
    VOLUME = {53},
   EDITION = {Second},
 Publisher = {Amer. Math. Soc., Providence, RI; Revista
              Matem\'atica Iberoamericana, Madrid},
      YEAR = {2002},
     PAGES = {xii+220},
      ISBN = {0-8218-3160-7},
       DOI = {10.1090/gsm/053},
       URL = {https://doi.org/10.1090/gsm/053},
}

@book {iwaniec1997topics,
    AUTHOR = {Iwaniec, Henryk},
     TITLE = {Topics in classical automorphic forms},
    SERIES = {Graduate Studies in Mathematics},
    VOLUME = {17},
 Publisher = {Amer. Math. Soc., Providence, RI},
      YEAR = {1997},
     PAGES = {xii+259},
      ISBN = {0-8218-0777-3},
       DOI = {10.1090/gsm/017},
       URL = {https://doi.org/10.1090/gsm/017},
}

@article {deshouillers1982greatest,
    AUTHOR = {Deshouillers, Jean-Marc and Iwaniec, Henryk},
     TITLE = {On the greatest prime factor of {$n\sp{2}+1$}},
   JOURNAL = {Ann. Inst. Fourier (Grenoble)},
  FJOURNAL = {Universit\'e{} de Grenoble. Annales de l'Institut Fourier},
    VOLUME = {32},
      YEAR = {1982},
    NUMBER = {4},
     PAGES = {1--11},
      ISSN = {0373-0956,1777-5310},
       DOI = {10.5802/aif.891},
       URL = {https://doi.org/10.5802/aif.891},
}

@article {topacogullari2018shifted,
    AUTHOR = {Topacogullari, Berke},
     TITLE = {The shifted convolution of generalized divisor functions},
   JOURNAL = {Int. Math. Res. Not. IMRN},
  FJOURNAL = {International Mathematics Research Notices. IMRN},
      YEAR = {2018},
    VOLUME = {2018},
    NUMBER = {24},
     PAGES = {7681--7724},
      ISSN = {1073-7928,1687-0247},
       DOI = {10.1093/imrn/rnx111},
       URL = {https://doi.org/10.1093/imrn/rnx111},
}

@article {deshouillers1984power,
    AUTHOR = {Deshouillers, Jean-Marc and Iwaniec, Henryk},
     TITLE = {Power mean-values for {D}irichlet's polynomials and the
              {R}iemann zeta-function. {II}},
   JOURNAL = {Acta Arith.},
  FJOURNAL = {Polska Akademia Nauk. Instytut Matematyczny. Acta Arithmetica},
    VOLUME = {43},
      YEAR = {1984},
    NUMBER = {3},
     PAGES = {305--312},
      ISSN = {0065-1036},
       DOI = {10.4064/aa-43-3-305-312},
       URL = {https://doi.org/10.4064/aa-43-3-305-312},
}

@article {deshouillers1982power,
    AUTHOR = {Deshouillers, Jean-Marc and Iwaniec, Henryk},
     TITLE = {Power mean values of the {R}iemann zeta function},
   JOURNAL = {Mathematika},
  FJOURNAL = {Mathematika. A Journal of Pure and Applied Mathematics},
    VOLUME = {29},
      YEAR = {1982},
    NUMBER = {2},
     PAGES = {202--212},
      ISSN = {0025-5793},
   MRCLASS = {10H05},
  MRNUMBER = {696876},
MRREVIEWER = {K.\ Ramachandra},
       DOI = {10.1112/S0025579300012298},
       URL = {https://doi.org/10.1112/S0025579300012298},
}

@article {lichtman2025modification,
    AUTHOR = {Lichtman, Jared D.},
     TITLE = {A modification of the linear sieve, and the count of twin
              primes},
   JOURNAL = {Algebra Number Theory},
  FJOURNAL = {Algebra \& Number Theory},
    VOLUME = {19},
      YEAR = {2025},
    NUMBER = {1},
     PAGES = {1--38},
      ISSN = {1937-0652,1944-7833},
       DOI = {10.2140/ant.2025.19.1},
       URL = {https://doi.org/10.2140/ant.2025.19.1},
}

@article {pascadi2026large,
    AUTHOR = {Pascadi, Alexandru},
     TITLE = {Large sieve inequalities for exceptional {M}aass forms and the
              greatest prime factor of {$n^2+1$}},
   JOURNAL = {Forum Math. Pi},
  FJOURNAL = {Forum of Mathematics. Pi},
    VOLUME = {14},
      YEAR = {2026},
     PAGES = {e8},
      ISSN = {2050-5086},
   MRCLASS = {11N75 (11L05 11N32)},
  MRNUMBER = {5035407},
       DOI = {10.1017/fmp.2026.10025},
       URL = {https://doi.org/10.1017/fmp.2026.10025},
}

@article {blomer2017moments,
    AUTHOR = {Blomer, Valentin and Fouvry, \'Etienne and Kowalski, Emmanuel and Michel, Philippe and Mili\'cevi\'c, Djordje},
     TITLE = {On moments of twisted {$L$}-functions},
   JOURNAL = {Amer. J. Math.},
  FJOURNAL = {American Journal of Mathematics},
    VOLUME = {139},
      YEAR = {2017},
    NUMBER = {3},
     PAGES = {707--768},
      ISSN = {0002-9327,1080-6377},
   MRCLASS = {11M06 (11F11 11F72 11L05 11L40 11T23)},
  MRNUMBER = {3650231},
MRREVIEWER = {Arnaud\ Chadozeau},
       DOI = {10.1353/ajm.2017.0019},
       URL = {https://doi.org/10.1353/ajm.2017.0019},
}

@article {blomer2015second,
    AUTHOR = {Blomer, Valentin and Mili\'cevi\'c, Djordje},
     TITLE = {The second moment of twisted modular {$L$}-functions},
   JOURNAL = {Geom. Funct. Anal.},
  FJOURNAL = {Geometric and Functional Analysis},
    VOLUME = {25},
      YEAR = {2015},
    NUMBER = {2},
     PAGES = {453--516},
      ISSN = {1016-443X,1420-8970},
   MRCLASS = {11F66 (11F72 11L07)},
  MRNUMBER = {3334233},
MRREVIEWER = {Ravi\ Raghunathan},
       DOI = {10.1007/s00039-015-0318-7},
       URL = {https://doi.org/10.1007/s00039-015-0318-7},
}

@article{grimmelt2025greatest,
  title={{On the greatest prime factor and uniform equidistribution of quadratic polynomials}},
  author={Grimmelt, Lasse and Merikoski, Jori},
  journal={Preprint, arXiv:2505.00493},
  year={2025}
}

@article{milicevic2025bilinear,
  title={Bilinear forms with {K}loosterman sums and moments of twisted {$L$}-functions},
  author={Mili\'cevi\'c, Djordje and Qin, Xinhua and Wu, Xiaosheng},
  journal={Preprint, arXiv:2511.07550},
  year={2025}
}

@book {schmidt1976bilinear,
    AUTHOR = {Schmidt, Wolfgang M.},
     TITLE = {Equations over finite fields. {A}n elementary approach},
    SERIES = {Lecture Notes in Mathematics},
    VOLUME = {Vol. 536},
 PUBLISHER = {Springer-Verlag, Berlin-New York},
      YEAR = {1976},
     PAGES = {ix+276},
   MRCLASS = {10B15 (10G05 14G13)},
  MRNUMBER = {429733},
MRREVIEWER = {H.\ M.\ Stark},
}

@article {pascadi2025exponents,
  title={On the exponents of distribution of primes and smooth numbers},
  author={Pascadi, Alexandru},
  journal={Preprint, arXiv:2505.00653},
  year={2025}
}

@article {pascadi2025nonabelian,
  title={Non-abelian amplification and bilinear forms with Kloosterman sums},
  author={Pascadi, Alexandru},
  journal={Geom. Funct. Anal., accepted. Preprint, arXiv:2511.08445v2},
  year={2025}
}

@article {fouvry2025bilinear,
  title={Bilinear forms with trace functions},
  author={Fouvry, {\'E}tienne and Kowalski, Emmanuel and Michel, Philippe and Sawin, Will},
  journal={Preprint, arXiv:2511.09459v1},
  year={2025}
}

@article {chandee2024eighth,
    AUTHOR = {Chandee, Vorrapan and Li, Xiannan and Matom\"aki, Kaisa and
              Radziwi\l\l, Maksym},
     TITLE = {The eighth moment of {D}irichlet {$L$}-functions {II}},
   JOURNAL = {Duke Math. J.},
  FJOURNAL = {Duke Mathematical Journal},
    VOLUME = {173},
      YEAR = {2024},
    NUMBER = {18},
     PAGES = {3453--3493},
      ISSN = {0012-7094,1547-7398},
   MRCLASS = {11M06 (11M26)},
  MRNUMBER = {4850958},
MRREVIEWER = {Andrius\ Grigutis},
       DOI = {10.1215/00127094-2024-0014},
       URL = {https://doi.org/10.1215/00127094-2024-0014},
}

\end{document}